\let\mathcal=\CMcal
\begin{document}
\def\sbt{\raisebox{1.2pt}{$\scriptscriptstyle\,\bullet\,$}}

\def\alp{\alpha}
\def\bet{\beta}
\def\gam{\gamma}
\def\del{\delta}
\def\eps{\epsilon}
\def\zet{\zeta}
\def\tht{\theta}
\def\iot{\iota}
\def\kap{\kappa}
\def\lam{\lambda}
\def\sig{\sigma}
\def\ome{\omega}
\def\vep{\varepsilon}
\def\vth{\vartheta}
\def\vpi{\varpi}
\def\vrh{\varrho}
\def\vsi{\varsigma}
\def\vph{\varphi}
\def\Gam{\Gamma}
\def\Del{\Delta}
\def\Tht{\Theta}
\def\Lam{\Lambda}
\def\Sig{\Sigma}
\def\Ups{\Upsilon}
\def\Ome{\Omega}
\def\vka{\varkappa}
\def\vDe{\varDelta}
\def\vSi{\varSigma}
\def\vTh{\varTheta}
\def\vGm{\varGamma}
\def\vOm{\varOmega}
\def\vPi{\varPi}
\def\vPh{\varPhi}
\def\vPs{\varPsi}
\def\vUp{\varUpsilon}
\def\vXi{\varXi}

\def\frka{{\mathfrak a}}    \def\frkA{{\mathfrak A}}
\def\frkb{{\mathfrak b}}    \def\frkB{{\mathfrak B}}
\def\frkc{{\mathfrak c}}    \def\frkC{{\mathfrak C}}
\def\frkd{{\mathfrak d}}    \def\frkD{{\mathfrak D}}
\def\frke{{\mathfrak e}}    \def\frkE{{\mathfrak E}}
\def\frkf{{\mathfrak f}}    \def\frkF{{\mathfrak F}}
\def\frkg{{\mathfrak g}}    \def\frkG{{\mathfrak G}}
\def\frkh{{\mathfrak h}}    \def\frkH{{\mathfrak H}}
\def\frki{{\mathfrak i}}    \def\frkI{{\mathfrak I}}
\def\frkj{{\mathfrak j}}    \def\frkJ{{\mathfrak J}}
\def\frkk{{\mathfrak k}}    \def\frkK{{\mathfrak K}}
\def\frkl{{\mathfrak l}}    \def\frkL{{\mathfrak L}}
\def\frkm{{\mathfrak m}}    \def\frkM{{\mathfrak M}}
\def\frkn{{\mathfrak n}}    \def\frkN{{\mathfrak N}}
\def\frko{{\mathfrak o}}    \def\frkO{{\mathfrak O}}
\def\frkp{{\mathfrak p}}    \def\frkP{{\mathfrak P}}
\def\frkq{{\mathfrak q}}    \def\frkQ{{\mathfrak Q}}
\def\frkr{{\mathfrak r}}    \def\frkR{{\mathfrak R}}
\def\frks{{\mathfrak s}}    \def\frkS{{\mathfrak S}}
\def\frkt{{\mathfrak t}}    \def\frkT{{\mathfrak T}}
\def\frku{{\mathfrak u}}    \def\frkU{{\mathfrak U}}
\def\frkv{{\mathfrak v}}    \def\frkV{{\mathfrak V}}
\def\frkw{{\mathfrak w}}    \def\frkW{{\mathfrak W}}
\def\frkx{{\mathfrak x}}    \def\frkX{{\mathfrak X}}
\def\frky{{\mathfrak y}}    \def\frkY{{\mathfrak Y}}
\def\frkz{{\mathfrak z}}    \def\frkZ{{\mathfrak Z}}

\def\cal{\fam2}
\def\cala{{\cal A}}
\def\calb{{\cal B}}
\def\calc{{\cal C}}
\def\cald{{\cal D}}
\def\cale{{\cal E}}
\def\calf{{\cal F}}
\def\calg{{\cal G}}
\def\calh{{\cal H}}
\def\cali{{\cal I}}
\def\calj{{\cal J}}
\def\calk{{\cal K}}
\def\call{{\cal L}}
\def\calm{{\cal M}}
\def\caln{{\cal N}}
\def\calo{{\cal O}}
\def\calp{{\cal P}}
\def\calq{{\cal Q}}
\def\calr{{\cal R}}
\def\cals{{\cal S}}
\def\calt{{\cal T}}
\def\calu{{\cal U}}
\def\calv{{\cal V}}
\def\calw{{\cal W}}
\def\calx{{\cal X}}
\def\caly{{\cal Y}}
\def\calz{{\cal Z}}

\def\AA{{\mathbb A}}
\def\BB{{\mathbb B}}
\def\CC{{\mathbb C}}
\def\DD{{\mathbb D}}
\def\EE{{\mathbb E}}
\def\FF{{\mathbb F}}
\def\GG{{\mathbb G}}
\def\HH{{\mathbb H}}
\def\II{{\mathbb I}}
\def\JJ{{\mathbb J}}
\def\KK{{\mathbb K}}
\def\LL{{\mathbb L}}
\def\MM{{\mathbb M}}
\def\NN{{\mathbb N}}
\def\OO{{\mathbb O}}
\def\PP{{\mathbb P}}
\def\QQ{{\mathbb Q}}
\def\RR{{\mathbb R}}
\def\SS{{\mathbb S}}
\def\TT{{\mathbb T}}
\def\UU{{\mathbb U}}
\def\VV{{\mathbb V}}
\def\WW{{\mathbb W}}
\def\XX{{\mathbb X}}
\def\YY{{\mathbb Y}}
\def\ZZ{{\mathbb Z}}

\def\bfa{{\mathbf a}}    \def\bfA{{\mathbf A}}
\def\bfb{{\mathbf b}}    \def\bfB{{\mathbf B}}
\def\bfc{{\mathbf c}}    \def\bfC{{\mathbf C}}
\def\bfd{{\mathbf d}}    \def\bfD{{\mathbf D}}
\def\bfe{{\mathbf e}}    \def\bfE{{\mathbf E}}
\def\bff{{\mathbf f}}    \def\bfF{{\mathbf F}}
\def\bfg{{\mathbf g}}    \def\bfG{{\mathbf G}}
\def\bfh{{\mathbf h}}    \def\bfH{{\mathbf H}}
\def\bfi{{\mathbf i}}    \def\bfI{{\mathbf I}}
\def\bfj{{\mathbf j}}    \def\bfJ{{\mathbf J}}
\def\bfk{{\mathbf k}}    \def\bfK{{\mathbf K}}
\def\bfl{{\mathbf l}}    \def\bfL{{\mathbf L}}
\def\bfm{{\mathbf m}}    \def\bfM{{\mathbf M}}
\def\bfn{{\mathbf n}}    \def\bfN{{\mathbf N}}
\def\bfo{{\mathbf o}}    \def\bfO{{\mathbf O}}
\def\bfp{{\mathbf p}}    \def\bfP{{\mathbf P}}
\def\bfq{{\mathbf q}}    \def\bfQ{{\mathbf Q}}
\def\bfr{{\mathbf r}}    \def\bfR{{\mathbf R}}
\def\bfs{{\mathbf s}}    \def\bfS{{\mathbf S}}
\def\bft{{\mathbf t}}    \def\bfT{{\mathbf T}}
\def\bfu{{\mathbf u}}    \def\bfU{{\mathbf U}}
\def\bfv{{\mathbf v}}    \def\bfV{{\mathbf V}}
\def\bfw{{\mathbf w}}    \def\bfW{{\mathbf W}}
\def\bfx{{\mathbf x}}    \def\bfX{{\mathbf X}}
\def\bfy{{\mathbf y}}    \def\bfY{{\mathbf Y}}
\def\bfz{{\mathbf z}}    \def\bfZ{{\mathbf Z}}

\def\scra{{\mathscr A}}
\def\scrb{{\mathscr B}}
\def\scrc{{\mathscr C}}
\def\scrd{{\mathscr D}}
\def\scre{{\mathscr E}}
\def\scrf{{\mathscr F}}
\def\scrg{{\mathscr G}}
\def\scrh{{\mathscr H}}
\def\scri{{\mathscr I}}
\def\scrj{{\mathscr J}}
\def\scrk{{\mathscr K}}
\def\scrl{{\mathscr L}}
\def\scrm{{\mathscr M}}
\def\scrn{{\mathscr N}}
\def\scro{{\mathscr O}}
\def\scrp{{\mathscr P}}
\def\scrq{{\mathscr Q}}
\def\scrr{{\mathscr R}}
\def\scrs{{\mathscr S}}
\def\scrt{{\mathscr T}}
\def\scru{{\mathscr U}}
\def\scrv{{\mathscr V}}
\def\scrw{{\mathscr W}}
\def\scrx{{\mathscr X}}
\def\scry{{\mathscr Y}}
\def\scrz{{\mathscr Z}}

\def\phm{\phantom}
\def\smallstrut{\vphantom{\vrule height 3pt }}
\def\bdm #1#2#3#4{\left(
\begin{array} {c|c}{\ds{#1}}
 & {\ds{#2}} \\ \hline
{\ds{#3}\vphantom{\ds{#3}^1}} &  {\ds{#4}}
\end{array}
\right)}

\def\GL{\mathrm{GL}}
\def\PGL{\mathrm{PGL}}
\def\SL{\mathrm{SL}}
\def\Mp{\mathrm{Mp}}
\def\PGSp{\mathrm{PGSp}}
\def\SK{\mathrm{SK}}
\def\Ik{\mathrm{Ik}}
\def\SU{\mathrm{SU}}
\def\SO{\mathrm{SO}}
\def\GSO{\mathrm{GSO}}
\def\GSp{\mathrm{GSp}}
\def\GO{\mathrm{GO}}
\def\GU{\mathrm{GU}}
\def\PGO{\mathrm{PGO}}
\def\U{\mathrm{U}}
\def\O{\mathrm{O}}
\def\G{\mathrm{G}}
\def\Mat{\mathrm{M}}
\def\Tr{\mathrm{Tr}}
\def\tr{\mathrm{tr}}
\def\Ad{\mathrm{Ad}}
\def\St{\mathrm{St}}
\def\new{\mathrm{new}}
\def\Wh{\mathrm{Wh}}
\def\FJ{\mathrm{FJ}}
\def\Fj{\mathrm{Fj}}
\def\Sym{\mathrm{Sym}}
\def\Her{\mathrm{Her}}
\def\sym{\mathrm{sym}}
\def\supp{\mathrm{supp}}
\def\proj{\mathrm{proj}}
\def\Hom{\mathrm{Hom}}
\def\End{\mathrm{End}}
\def\Ker{\mathrm{Ker}}
\def\Res{\mathrm{Res}}
\def\res{\mathrm{res}}
\def\cusp{\mathrm{cusp}}
\def\Irr{\mathrm{Irr}}
\def\rank{\mathrm{rank}}
\def\sgn{\mathrm{sgn}}
\def\diag{\mathrm{diag}}
\def\Wd{\mathrm{Wd}}
\def\nd{\mathrm{nd}}
\def\d{\mathrm{d}}
\def\rmm{\mathrm{m}}
\def\rmMp{\mathrm{Mp}}
\def\rmSp{Sp}
\def\rmO{\mathrm{O}}
\def\rmSO{\mathrm{SO}}
\def\rmGS{\mathrm{GS}}
\def\rmGO{\mathrm{GO}}
\def\rmGU{\mathrm{GU}}
\def\La{\langle}
\def\Ra{\rangle}

\def\trs{\,^t\!}
\def\tri{\,^\iot\!}
\def\iu{\sqrt{-1}}
\def\oo{\hbox{\bf 0}}
\def\ono{\hbox{\bf 1}}
\def\smallcirc{\lower .3em \hbox{\rm\char'27}\!}
\def\AAf{\AA_\bff}
\def\thalf{\tfrac{1}{2}}
\def\bsl{\backslash}
\def\wtl{\widetilde}
\def\til{\tilde}
\def\Ind{\operatorname{Ind}}
\def\ind{\operatorname{ind}}
\def\cind{\operatorname{c-ind}}
\def\ord{\operatorname{ord}}
\def\beq{\begin{equation}}
\def\eeq{\end{equation}}
\def\d{\mathrm{d}}

\newcounter{one}
\setcounter{one}{1}
\newcounter{two}
\setcounter{two}{2}
\newcounter{thr}
\setcounter{thr}{3}
\newcounter{fou}
\setcounter{fou}{4}
\newcounter{fiv}
\setcounter{fiv}{5}
\newcounter{six}
\setcounter{six}{6}
\newcounter{sev}
\setcounter{sev}{7}
\newcounter{eig}
\setcounter{eig}{8}
\newcounter{nin}
\setcounter{nin}{9}

\newcommand{\shp}{\rm\char'43}

\def\lddots{\mathinner{\mskip1mu\raise1pt\vbox{\kern7pt\hbox{.}}\mskip2mu\raise4pt\hbox{.}\mskip2mu\raise7pt\hbox{.}\mskip1mu}}
\newcommand{\1}{1\hspace{-0.25em}{\rm{l}}}

\makeatletter
\def\varddots{\mathinner{\mkern1mu
    \raise\p@\hbox{.}\mkern2mu\raise4\p@\hbox{.}\mkern2mu
    \raise7\p@\vbox{\kern7\p@\hbox{.}}\mkern1mu}}
\makeatother

\def\today{\ifcase\month\or
 January\or February\or March\or April\or May\or June\or
 July\or August\or September\or October\or November\or December\fi
 \space\number\day, \number\year}

\makeatletter
\def\varddots{\mathinner{\mkern1mu
    \raise\p@\hbox{.}\mkern2mu\raise4\p@\hbox{.}\mkern2mu
    \raise7\p@\vbox{\kern7\p@\hbox{.}}\mkern1mu}}
\makeatother

\def\today{\ifcase\month\or
 January\or February\or March\or April\or May\or June\or
 July\or August\or September\or October\or November\or December\fi
 \space\number\day, \number\year}

\makeatletter
\@addtoreset{equation}{section}
\def\theequation{\thesection.\arabic{equation}}

\theoremstyle{plain}
\newtheorem{theorem}{Theorem}[section]
\newtheorem*{theorem_a}{Theorem \ref{thm:52}}
\newtheorem*{theorem_b}{Theorem \ref{thm:51}}
\newtheorem*{theorem_c}{Theorem \ref{thm:91}}
\newtheorem*{theorem_d}{Theorem \ref{thm:92}}
\newtheorem*{corollary_a}{Corollary A}
\newtheorem{lemma}[theorem]{Lemma}
\newtheorem{proposition}[theorem]{Proposition}
\theoremstyle{definition}
\newtheorem{definition}[theorem]{Definition}
\newtheorem{conjecture}[theorem]{Conjecture}
\theoremstyle{remark}
\newtheorem{remark}[theorem]{Remark}
\newtheorem*{main_remark}{Remark}
\newtheorem{corollary}[theorem]{Corollary}

\renewcommand{\thepart}{\Roman{part}}
\setcounter{tocdepth}{1}
\setcounter{section}{0} 


\title[]{\bf The CAP representations indexed by Hilbert cusp forms} 
\author{Shunsuke Yamana}
\address{Department of mathematics, Kyoto University, Kitashirakawa Oiwake-cho, Sakyo-ku, Kyoto, 606-8502, Japan/Hakubi Center, Yoshida-Ushinomiya-cho, Sakyo-ku, Kyoto, 606-8501, Japan}
\email{yamana07@math.kyoto-u.ac.jp}
\begin{abstract} 
We combine the Duke-Imamoglu-Ikeda lifting with the theta lifting to produce new CAP representations of metaplectic, symplectic and orthogonal groups. 
These constructions partially generalize the theories of Waldspurger on the Shimura correspondence and of Piatetski-Shapiro on the Saito-Kurokawa lifting to higher dimensions. 
Applications include a relation between Fourier coefficients of Hilbert cusp forms of weight $k+\frac{1}{2}$ and a weighted sum of the representation numbers of a quadratic form of rank $2k$ by a quadratic form of rank $4k$. 
\end{abstract}
\keywords{Hilbert-Siegel cusp forms, 
Duke-Imamoglu-Ikeda lifts, theta lifts, the Shimura correspondence, Saito-Kurokawa lifts, representation numbers} 
\subjclass{11F30, 11F41} 
\maketitle

\tableofcontents

\section{Introduction}\label{sec:1}

Let $F$ be a totally real number field of degree $d$ with ad\`{e}le ring $\AA$. 
We denote the set of archimedean primes of $F$ by $\frkS_\infty$ and the normalized absolute value by $\alp=\prod_v\alp_v:\AA^\times\to\RR^\times_+$. 
Define a character $\bfe:\RR\to\SS$ by $\bfe(x)=e^{2\pi\iu x}$.  
Let $\psi=\prod_v\psi_v$ be the additive character of $\AA/F$ such that $\psi_v=\bfe$ for $v\in\frkS_\infty$. 
For $\eta\in F^\times_v$ we define a character $\psi^\eta_v$ of $F_v$ by $\psi^\eta_v(x)=\psi_v(\eta x)$ and have an $8^\mathrm{th}$ root of unity $\gam(\psi_v^\eta)$ such that for all Schwartz functions $\phi$ on $F_v$ 
\[\int_{F_v}\phi(x_v)\psi_v(\eta x_v^2)\,\d x_v=\gam(\psi_v^\eta)\alp_v(2\eta)^{-1/2}\int_{F_v}\calf\phi(x_v)\psi_v\left(-\frac{x_v^2}{4\eta}\right)\d x_v, \]
where $\d x_v$ is the self-dual Haar measure on $F_v$ with respect to the Fourier transform $\calf\phi(y)=\int_{F_v}\phi(x_v)\psi_v(x_vy)\,\d x_v$. 
Set $\gam^{\psi_v}(\eta)=\gam(\psi_v)/\gam(\psi^\eta_v)$. 

We denote by $\Mp(W_n)_\AA$ the metaplectic double cover of the symplectic group $Sp_n(\AA)$ of rank $n$ and by $\scra_\cusp(\Mp(W_n))$ the space of cusp forms on $Sp_n(F)\bsl\Mp(W_n)$. 
Denote the inverse image of $Sp_n(F_v)$ by $\Mp(W_n)_v$. 
Given an irreducible infinite dimensional unitary representation $\pi_v$ of $\PGL_2(F_v)$, Waldspurger \cite{W2} associated a packet $\rmMp_1^{\psi_v}(\pi_v)$ of irreducible genuine unitary representations of $\Mp(W_1)_v$. 
This packet has cardinality two or one, depending on whether $\pi_v$ is a discrete series representation or not. 
Thus it can be written as $\rmMp_1^{\psi_v}(\pi_v)=\{\rmMp_1^{\psi_v}(\pi_v^+),\;\rmMp_1^{\psi_v}(\pi_v^-)\}$, where $\rmMp_1^{\psi_v}(\pi_v^-)$ is interpreted as $0$ if $\pi_v$ is not a discrete series representation. 

The packet $\rmMp_1^{\psi_v}(\pi_v)$ is constructed by using the Howe correspondence (associated to $\psi_v$) furnished by the dual pairs:

\begin{picture}(300,45)
\put(105,33){$\PGL_2(F_v)$}
\put(120,0){$\mathrm{P}D_v^\times$}
\put(225,16){$\Mp(W_1)_v$}
\put(158,36){\vector(4,-1){60}}
\put(158,3){\vector(4,1){60}}
\put(130,10){\vector(0,1){18}}
\put(40,16){\small Jacquet-Langlands}
\put(190,32){\small Howe}
\put(190,1){\small Howe}
\end{picture}

\noindent Here $D_v$ denotes the unique quaternion division algebra over $F_v$. 
The main point about this diagram is that it does not commute. 
This is an instance of a general feature of correspondences associated to dual pairs. 
If we set $\pi_v^+=\pi_v$ and write $\pi^-_v$ for the Jacquet-Langlands lift of $\pi_v$ to $\mathrm{P}D_v^\times$, then $\rmMp_1^{\psi_v}(\pi_v^\pm)$ is the theta lift of $\pi_v^\pm$ to $\Mp(W_1)_v$. 
 
In the first half of this paper we take $n$ to be odd.  
Let $\scrp_n$ denote a parabolic subgroup of $Sp_n$ with Levi subgroup $\GL_2\times\cdots\times\GL_2\times\SL_2$. 
The letter $\frkp$ denotes a nonarchimedean prime of $F$. 
We now define the local $A$-packet of irreducible genuine unitary representations of $\Mp(W_n)_\frkp$ by 
\[\rmMp_n^{\psi_\frkp}(\pi_\frkp)=\{\rmMp_n^{\psi_\frkp}(\pi_\frkp^+),\;\rmMp_n^{\psi_\frkp}(\pi_\frkp^-)\}, \]
where $\rmMp_n^{\psi_\frkp}(\pi_\frkp^\pm)$ is the unique irreducible quotient of the standard module 
\[\Ind^{\Mp(W_n)_\frkp}_{\til\scrp_n}(\pi_\frkp\otimes\gam^{\psi_\frkp}\alp_\frkp^{(n-1)/2})\boxtimes\cdots\boxtimes(\pi_\frkp\otimes\gam^{\psi_\frkp}\alp^2_\frkp)\boxtimes(\pi_\frkp\otimes\gam^{\psi_\frkp}\alp_\frkp)\boxtimes \rmMp_1^{\psi_\frkp}(\pi_\frkp^\pm). \]

For $k\in\NN$ we denote the discrete series representation of $\PGL_2(\RR)$ with extremal weights $\pm 2k$ by $D_{2k}$. 
Let $\frkD_\ell^{(n)}$ (resp. $\bar\frkD_\ell^{(n)}$) denote the lowest (resp. highest) weight representation of the real metaplectic group of rank $n$ with lowest (resp. highest) $K$-type $(\det)^\ell$ (resp. $(\det)^{-\ell}$) for a positive half-integer $\ell$. 
Define 
\[\rmMp_n^\bfe(D^{}_{2k})=\{\rmMp_n^\bfe(D_{2k}^+),\; \rmMp_n^\bfe(D_{2k}^-)\} \] 
by setting  
\begin{align*}
\rmMp_n^\bfe\Big(D_{2k}^{(-1)^{(n-1)/2}}\Big)&=\frkD^{(n)}_{(2k+n)/2}, & 
\rmMp_n^\bfe\Big(D_{2k}^{(-1)^{(n+1)/2}}\Big)&=\bar\frkD^{(n)}_{(2k+n)/2}. 
\end{align*}


\begin{conjecture}\label{coj:11}
If $\pi\simeq\otimes'_v\pi_v$ is an irreducible cuspidal automorphic representation of $\PGL_2(\AA)$ such that $\pi_v\simeq D_{2\kap_v}$ for $v\in\frkS_\infty$, then $\otimes_v'\rmMp_n^{\psi_v}(\pi_v^{\eps_v})$ has multiplicity $\frac{1}{2}\bigl(1+\vep\left(\frac{1}{2},\pi\right)\prod_v\eps_v\bigl)$ in $\scra_\cusp(\Mp(W_n))$. 
\end{conjecture}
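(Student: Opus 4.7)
The plan is to reduce Conjecture~\ref{coj:11} to Waldspurger's theorem at rank one by means of a Duke--Imamoglu--Ikeda type lift $\Ik_n\colon\scra_\cusp(\Mp(W_1))\to\scra_\cusp(\Mp(W_n))$. The reduction has two halves: construction of cuspidal embeddings when $\prod_v\eps_v=\vep(1/2,\pi)$, and a theta-descent from $\Mp(W_n)$ back to $\Mp(W_1)$ that yields the multiplicity upper bound as well as the vanishing in the non-admissible case.

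For the existence half, I would fix an admissible sign pattern $(\eps_v)_v$. Waldspurger's theorem supplies a cuspidal irreducible
\[\sig^{(\eps_v)}=\otimes'_v\rmMp_1^{\psi_v}(\pi_v^{\eps_v})\subset\scra_\cusp(\Mp(W_1))\]
of multiplicity one. For $h\in\sig^{(\eps_v)}$ I would form $\Ik_n(h)$ as an iterated residue of the Eisenstein series attached to $\til\scrp_n$ with cuspidal datum
\[(\pi\otimes\gam^\psi\alp^{(n-1)/2})\boxtimes\cdots\boxtimes(\pi\otimes\gam^\psi\alp)\boxtimes h.\]
Cuspidality and non-vanishing of $\Ik_n(h)$ are to be checked by an explicit Fourier-expansion computation in the style of Duke--Imamoglu--Ikeda. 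Identification of the representation generated by $\Ik_n(h)$ with $\otimes'_v\rmMp_n^{\psi_v}(\pi_v^{\eps_v})$ reduces, place by place, to pinning down the unique irreducible quotient of the standard module: at finite primes this is the Langlands classification, and at archimedean places the lowest (resp.\ highest) $K$-type $(\det)^{\pm(2\kap_v+n)/2}$ of $\Ik_n(h)$ matches $\frkD^{(n)}_{(2\kap_v+n)/2}$ (resp.\ $\bar\frkD^{(n)}_{(2\kap_v+n)/2}$) exactly as prescribed in the archimedean definition of the packet.

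The multiplicity upper bound and the non-admissible vanishing proceed by descent. Given any cuspidal embedding of $\otimes'_v\rmMp_n^{\psi_v}(\pi_v^{\eps_v})$, one theta-lifts down a tower of orthogonal groups until the first non-zero step produces a cuspidal automorphic representation of $\Mp(W_1)_\AA$ with local components $\rmMp_1^{\psi_v}(\pi_v^{\eps_v})$; by Waldspurger this forces $\prod_v\eps_v=\vep(1/2,\pi)$ and pins the multiplicity down to one. The essential input is a regularized Rallis inner product formula expressing the Petersson norm of the descended form as a non-zero constant times $L(1/2,\pi)$ multiplied by explicit local factors that detect the individual signs $\eps_v$.

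The decisive obstacle is precisely this last step. At rank one, Waldspurger's dichotomy rests on the classical Waldspurger formula, but for $n>1$ the analogue involves regularized theta integrals whose non-vanishing, and whose matching of the Weil-representation signs $\gam^{\psi_v}$ with the global root number $\vep(1/2,\pi)$, requires a careful Ikeda-type Rallis formula. Establishing this formula — and using it to rule out the possibility that $\Ik_n$ misses some cuspidal embedding — is where the substantive work of the proof must be done.
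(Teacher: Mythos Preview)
First, note that this statement is a \emph{conjecture}: the paper does not prove it in full, only the special case where no $\pi_\frkp$ is supercuspidal and $\kap_v>n/2$ for every real place (Theorem~\ref{thm:52}). Your plan implicitly aims at the full statement, so you are already beyond what the paper establishes.

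Even restricted to that special case, your strategy diverges from the paper's and has two genuine gaps. You want a separate Ikeda lift $\Ik_n(h)$ for \emph{each} admissible sign pattern $(\eps_v)$, built from the corresponding Waldspurger form $h$ on $\Mp(W_1)$. But the Ikeda--Yamana construction (Proposition~\ref{prop:51}, from \cite{IY}) produces only the \emph{single} packet member $\Ik_n^\psi(\pi)=\otimes'_v\rmMp_n^{\psi_v}(\pi_v^{\eps_n(\pi_v)})$ attached to the distinguished sign $\eps_n(\pi_v)$: at the archimedean places it yields lowest-weight forms, and at finite primes its local components are the specific constituents of degenerate principal series pinned down in Lemma~\ref{lem:44}. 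There is no residue construction on record that hits the other $2^{d+\ell_\pi-1}-1$ admissible elements directly. The paper obtains them instead by theta-lifting $\Ik_n^\psi(\pi)$ to the various odd orthogonal groups $\SO(V_n^\vep)$ and then theta-lifting back with a twisted additive character $\psi^\eta$; varying $\vep$ and $\eta$ sweeps out the packet (Theorems~\ref{thm:51} and~\ref{thm:53}), and multiplicity-preservation (Proposition~\ref{prop:22}) transports multiplicity one from the single Ikeda element to the rest.

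Your descent step is also problematic as written. There is no theta lift from $\Mp(W_n)$ to $\Mp(W_1)$; the Witt tower attached to $\Mp(W_n)$ consists of orthogonal groups, so ``theta-lift down a tower of orthogonal groups until the first non-zero step produces a cuspidal automorphic representation of $\Mp(W_1)_\AA$'' has no clear meaning. The paper never descends to $\Mp(W_1)$: the upper bound on multiplicity comes from the bijectivity of the theta lift $\Mp(W_n)\leftrightarrow\SO(V_n^\vep)$ under the $L$-function hypotheses of Proposition~\ref{prop:22}, combined with the known multiplicity one for $\Ik_n^\psi(\pi)$. Finally, the obstacle you flag --- possible vanishing of $L(1/2,\pi)$ --- is not confronted head-on in the paper but circumvented: one uses $\rmSO_n^\vep(\pi)\simeq\rmSO_n^\vep(\pi\otimes\hat\chi^\xi)\otimes\hat\chi^\xi\circ x_n^\vep$ to twist $\pi$ by a quadratic character, and then Waldspurger's non-vanishing theorem \cite[Theorem~4]{W2} supplies $\eta$ with $L(1/2,\pi\otimes\hat\chi^\eta)\neq 0$.
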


When $n=1$, Conjecture \ref{coj:11} is the elegant fundamental result of Waldspurger \cite{W2}. 
One source of motivation for the present work is to generalize this result to high rank groups. 

\begin{theorem_a}
If none of $\pi_\frkp$ is supercuspidal 
and if $\kap_v>\frac{n}{2}$ for all $v\in\frkS_\infty$, then Conjecture \ref{coj:11} is true. 
\end{theorem_a}

The genuine part of $\scra_\cusp(\Mp(W_1))$ is exhausted by the packets of this type 
and elementary cuspidal theta series. 
But when $n>1$, those representations form a very small, exceptional class of cuspidal automorphic representations. 
In particular, they are CAP with respect to $\scrp_n$ and not tempered at any finite primes.  
If $\eps_v=+$ for all $v\in\frkS_\infty$, then elements which correspond to a lowest weight vector at every real place are Hilbert-Siegel cusp forms of weight $\bigl(\kap_v+\frac{n}{2}\bigl)_{v\in\frkS_\infty}$. 
Their Fourier coefficients will have number theoretical interest (cf. Theorem \ref{thm:11}). 

\begin{remark}\label{rem:11} 
Denote by $\frkS_\pi^-$ the set of nonarchimedean primes $\frkp$ such that $\pi_\frkp$ is the Steinberg representation. 
Let $\eps_n(\pi_v)$ be defined by 
\[\eps_n(\pi_v)=\begin{cases} 
(-1)^{n(n-1)/2}&\text{for $v\in\frkS_\infty$, } \\
-&\text{for $v\in\frkS_\pi^-$, } \\ 
+&\text{for $v\notin\frkS_\infty\cup\frkS_\pi^-$.} 
\end{cases}\]
Provided that $\vep\bigl(\frac{1}{2},\pi\bigl)\prod_v\eps_n(\pi_v)=1$, Tamotsu Ikeda and the author \cite{IY} have constructed the embedding of $\otimes_v'\rmMp_n^{\psi_v}(\pi_v^{\eps_n(\pi_v)})$ into $\scra_\cusp(\Mp(W_n))$ without the restriction on $\kap_v$ by means of its explicit Fourier expansion. 
\end{remark}

In this paper we will realize those cuspidal automorphic representations in the packet as theta liftings from orthogonal groups of quadratic forms of rank $2n+1$. 
These two different constructions will yield interesting identities of cusp forms. 
If $\prod_v\vep_v=1$, then the Minkowski-Hasse theorem gives a unique equivalence class of quadratic spaces $V_n^\vep$ of dimension $2n+1$ and discriminant $1$ which satisfy the following conditions: 
\begin{itemize}
\item $V_n^\vep(F_v)$ has signature $(2n,1)$ if $v\in\frkS_\infty$ and $\vep_v=(-1)^{(n-1)/2}$; 
\item $V_n^\vep(F_v)$ is negative definite if $v\in\frkS_\infty$ and $\vep_v=(-1)^{(n+1)/2}$; 
\item $V_n^\vep(F_\frkp)$ is split or not according as $\vep_\frkp=+$ or $\vep_\frkp=-$. 
\end{itemize}

For a technical reason we suppose that $\kap_v>\frac{n}{2}$ for all $v\in\frkS_\infty$. 
When $v\in\frkS_\infty$, we let $\rmSO_n^{\vep_v}(\pi_v)$ be the discrete series representation of $\SO(V_n^\vep,F_v)$ with Harish-Chandra parameter $\bigl(\kap_v+\frac{n}{2}-1,\kap_v+\frac{n}{2}-2,\dots,\kap_v-\frac{n}{2}\bigl)$. 
Let $\rmSO_n^{\vep_\frkp}(\pi_\frkp)$ be the unique irreducible quotient of  
\[\Ind^{\SO(V^\vep_n,F_\frkp)}_{Q^{\vep_\frkp}_n}(\pi_\frkp\otimes\alp_\frkp^{(n-1)/2})\boxtimes(\pi_\frkp\otimes\alp_\frkp^{(n-3)/2})\boxtimes\cdots\boxtimes(\pi_\frkp\otimes\alp_\frkp)\boxtimes\pi^{\vep_\frkp}_\frkp, \] 
where $Q^{\vep_\frkp}_n$ is a parabolic subgroup of $\SO(V^\vep_n,F_\frkp)$ with Levi subgroup 
\[\GL_2(F_\frkp)\times\cdots\times\GL_2(F_\frkp)\times \SO(V_1^\vep,F_\frkp). \]
We here set $\rmSO_n^-(\pi_\frkp)=0$ if $\pi_\frkp$ is an irreducible principal series. 

In Theorem \ref{thm:53} we shall show that the following conjecture is equivalent to Conjecture \ref{coj:11}, provided that $\kap_v>\frac{n}{2}$ for every $v\in\frkS_\infty$. 

\begin{conjecture}\label{coj:12}
If $\pi$ is an irreducible cuspidal automorphic representation of $\PGL_2(\AA)$ such that $\pi_v\simeq D_{2\kap_v}$ with $\kap_v>\frac{n}{2}$ for every $v\in\frkS_\infty$, then $\otimes_v'\rmSO_n^{\vep_v}(\pi_v)$ has multiplicity $1$ in the space of cusp forms on $\SO(\calv_n^\vep,\AA)$. 
\end{conjecture}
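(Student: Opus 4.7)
The plan is to prove Conjecture \ref{coj:12} by transferring the multiplicity statement from the metaplectic side to the orthogonal side via the theta correspondence for the dual pair $(\Mp(W_n),\O(V_n^\vep))$. Since $\dim V_n^\vep=2n+1$ while $\dim W_n=2n$, we are in the balanced range where the global theta lift between cuspidal packets is expected to be a bijection under the appropriate sign condition, and this should identify Conjecture \ref{coj:12} with Conjecture \ref{coj:11}.

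The first step is to pin down the local theta correspondence. One must show that the local Howe correspondence attached to $\psi_v$ sends $\rmMp_n^{\psi_v}(\pi_v^{\eps_v})$ to $\rmSO_n^{\vep_v}(\pi_v)$ under a sign correspondence $\eps_v\leftrightarrow\vep_v$ governed by the Hasse invariant of $V_n^\vep(F_v)$. At nonarchimedean primes, since both $\rmMp_n^{\psi_v}(\pi_v^{\eps_v})$ and $\rmSO_n^{\vep_v}(\pi_v)$ are defined as unique irreducible quotients of parallel parabolic inductions whose cuspidal supports are shifts of $\pi_v$, one uses the compatibility of theta lifting with parabolic induction to reduce the matching to the rank-one Waldspurger correspondence, together with the accidental isomorphisms $\SO(V_1^+)\simeq\PGL_2$ and $\SO(V_1^-)\simeq\mathrm{P}D_v^\times$. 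At archimedean primes one computes in the Fock model that the theta lift of $\frkD^{(n)}_{(2\kap_v+n)/2}$ to $\SO(V_n^\vep,F_v)$ is the asserted discrete series with Harish-Chandra parameter $\bigl(\kap_v+\tfrac{n}{2}-1,\dots,\kap_v-\tfrac{n}{2}\bigr)$; the hypothesis $\kap_v>\tfrac{n}{2}$ guarantees that this parameter is regular.

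For existence of a cuspidal embedding, I apply the global theta lift to a cusp form on $\Mp(W_n)$ realizing the prescribed member of the metaplectic packet. Such a cusp form is provided unconditionally by the Ikeda--Yamana construction described in Remark \ref{rem:11} via its explicit Fourier expansion. Cuspidality of the theta lift is handled by a Rallis tower argument: a non-cuspidal lift would force a pole of the standard $L$-function $L(s,\pi)$, which is excluded since $\pi$ is cuspidal on $\PGL_2$. Nonvanishing is controlled by the Rallis inner product formula, which expresses the Petersson norm of the theta lift as a product of local zeta integrals times $L(\tfrac{1}{2},\pi)$. The condition $\vep(\tfrac{1}{2},\pi)\prod_v\eps_v=1$ encoded in the choice of $V_n^\vep$ ensures that the functional equation does not force $L(\tfrac{1}{2},\pi)=0$, and the assumption $\kap_v>\tfrac{n}{2}$ ensures absolute convergence and nonvanishing of the archimedean local integrals.

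For the multiplicity-one upper bound, one theta-lifts in the reverse direction: any cusp form on $\SO(\calv_n^\vep,\AA)$ realizing $\otimes_v'\rmSO_n^{\vep_v}(\pi_v)$ has a nonzero global theta lift to $\Mp(W_n)$ landing in $\otimes_v'\rmMp_n^{\psi_v}(\pi_v^{\eps_v})$, so multiplicity one there (Conjecture \ref{coj:11}, known under our hypotheses by Theorem \ref{thm:52}) forces multiplicity at most one on the orthogonal side. This mutual theta-lifting argument is exactly the equivalence asserted in Theorem \ref{thm:53}. The main obstacle is the bookkeeping of local signs combined with the nonvanishing of local theta lifts at nonarchimedean primes where $\pi_\frkp$ is nontempered: the relevant packet members are nongeneric, so Whittaker-model criteria are unavailable, and one must instead deploy a see-saw identity together with an analysis of poles of degenerate Eisenstein series through the regularized Siegel--Weil formula.
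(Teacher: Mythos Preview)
Your overall strategy---transfer via theta correspondence from the metaplectic side---matches the paper's, but there is a genuine gap in the nonvanishing step and a circularity in the upper bound.

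For nonvanishing: you correctly identify that the Rallis inner product brings in the central $L$-value (more precisely, the standard $L$-function of $\Ik_n^\psi(\pi)$ at $s=\tfrac12$, which factors as $\prod_{j=1}^nL_\bff\bigl(\tfrac{n+1}{2}+\tfrac12-j,\pi\otimes\hat\chi^\eta\bigr)$). But observing that the root number is $+1$ does \emph{not} ensure $L\bigl(\tfrac12,\pi\bigr)\neq 0$; nor is any condition of the form $\vep\bigl(\tfrac12,\pi\bigr)\prod_v\eps_v=1$ ``encoded in the choice of $V_n^\vep$''---the existence of $V_n^\vep$ requires only the Hasse condition $\prod_v\vep_v=1$. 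The paper handles this by a two-stage twisting argument that you have omitted. First, the spinor-norm identity $\rmSO_n^\vep(\pi)\simeq\rmSO_n^\vep(\pi\otimes\hat\chi^\xi)\otimes\hat\chi^\xi\circ x_n^\vep$ (Lemmas \ref{lem:31}, \ref{lem:32}) gives $\rmm_\cusp(\rmSO_n^\vep(\pi))=\rmm_\cusp(\rmSO_n^\vep(\pi\otimes\hat\chi^\xi))$ for all $\xi$, so one may freely replace $\pi$ by a quadratic twist to arrange the parity needed for Proposition \ref{prop:51}. Second, one theta-lifts $\Ik_n^\psi(\pi)$ with respect to $\psi^\eta$ rather than $\psi$; the choice of $\eta$ simultaneously controls \emph{which} orthogonal space $V_n^\vep$ receives the lift (via the requirement $\frkS_{\pi\otimes\hat\chi^\eta}^-=\{\frkp:\vep_\frkp=-\}$ and the archimedean signs) and, by Waldspurger's nonvanishing theorem \cite[Theorem~4]{W2}, is chosen so that $L\bigl(\tfrac12,\pi\otimes\hat\chi^\eta\bigr)\neq 0$. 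This twisting manoeuvre is the core of the proof and cannot be bypassed.

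For the upper bound: invoking Theorem \ref{thm:52} is circular. In the paper's logic, Theorem \ref{thm:52} is deduced \emph{from} Theorem \ref{thm:51} (the orthogonal multiplicity statement you are trying to prove) via Theorem \ref{thm:53}. The paper avoids any separate upper-bound step by using the multiplicity \emph{equality} in Proposition \ref{prop:22}(\ref{prop:221}) directly: once $\rmm_\cusp(\Ik_n^\psi(\pi))=1$ is known from Proposition \ref{prop:51}, multiplicity preservation yields $\rmm_\cusp(\rmSO_n^\vep(\pi\otimes\hat\chi^\eta))=1$ in one stroke. Finally, the Ikeda--Yamana input is not ``unconditional'': Proposition \ref{prop:51} requires that no $\pi_\frkp$ be supercuspidal, so your argument, like the paper's, establishes only the special case stated as Theorem \ref{thm:51}, not the full Conjecture \ref{coj:12}.
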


One of key ingredients is the special case of Conjecture \ref{coj:11} proved in \cite{IY} (see Remark \ref{rem:11}). 
By applying the theta lifts back and forth between the metaplectic and odd orthogonal groups, as Waldspurger did in \cite{W2}, we can deduce the following special case of Conjecture \ref{coj:12} and Theorem \ref{thm:52}. 

\begin{theorem_b}
If none of $\pi_\frkp$ is supercuspidal, then Conjecture \ref{coj:12} is true. 
\end{theorem_b}

The nonvanishing of theta liftings can be determined by the local Howe correspondence and the central value of a standard $L$-function. 
Extensions of the diagram above to the case of general $W_n$ have been obtained in the real case by Adams and Barbasch \cite{AB} and in the $\frkp$-adic case by Gan and Savin \cite{GS}. 
The $L$-function was determined by the author \cite{Y3} in complete generality. 
More serious is the obstruction of its possible vanishing at the central point. 
As for the representations we specifically consider, we can overcome this difficulty by appealing to a result of Waldspurger \cite[Theorem 4]{W2} (or Friedberg and Hoffstein \cite{FH}). 

Conjectures \ref{coj:11} and \ref{coj:12} are special cases of Arthur's conjecture for metaplectic and (non quasisplit) odd orthogonal groups (see \cite{G3}), which leads to a partition of the set of automorphic representations into packets of various types. 
Though an appropriate trace formula will establish more comprehensive results, 
our proof is elementary and gives more precise information. 

\medskip

Next we take $n$ to be even. 
Let $\SK_2(\pi_v)=\{\SK_2^+(\pi_v),\;\SK_2^-(\pi_v)\}$ be the Saito-Kurokawa packet for $\PGSp_2(F_v)$. 
It was first constructed by Piatetski-Shapiro \cite{PS2} using the dual pair $\Mp(W_1)_v\times\PGSp_2(F_v)$ and then reconstructed by Schmidt \cite{Sc} using the dual pair $\GSp_2(F_v)\times\GO(D_v)$:

\begin{picture}(300,50)
\put(222,36){$\PGL_2(F_v)\times\PGL_2(F_v)$}
\put(247,3){$\mathrm{P}D_v^\times\times\mathrm{P}D_v^\times$}
\put(15,19){$\Mp(W_1)_v$}
\put(62,23){\vector(1,0){40}}
\put(70,13){\small Howe}
\put(105,19){$\PGSp_2(F_v)$}
\put(220,39){\vector(-4,-1){60}}
\put(220,6){\vector(-4,1){60}}
\put(277,13){\vector(0,1){18}}
\put(177,36){\small Howe}
\put(177,4){\small Howe}
\end{picture}

\noindent The representation $\SK_2^\pm(\pi_v)$ is the theta lift of $\rmMp_1^{\psi_v}(\pi_v^\pm)$ and the theta lift of $\pi_v^\pm\boxtimes\1$. 
Though the similitude groups are more suitable, we here consider the isometry groups. 
We denote the maximal semisimple quotient of 
\[\Ind^{Sp_n(F_\frkp)}_{\scrq_n}(\pi_\frkp\otimes\alp_\frkp^{(n-1)/2})\boxtimes(\pi_\frkp\otimes\alp_\frkp^{(n-3)/2})\boxtimes\cdots\boxtimes(\pi_\frkp\otimes\alp_\frkp^{3/2})\boxtimes\SK_2^\pm(\pi_\frkp)|_{Sp_2(F_\frkp)} \] 
by $\rmSp^\pm_n(\pi_\frkp)$, where 
$\scrq_n$ is a parabolic subgroup of $Sp_n$ whose Levi subgroup is isomorphic to $\GL_2(F_\frkp)\times\cdots\times\GL_2(F_\frkp)\times Sp_2(F_\frkp)$. 
Set 
\[Sp_n^{(-1)^{n/2}}(D_{2k})=\frkD_{(2k+n)/2}^{(n)}\oplus\bar\frkD_{(2k+n)/2}^{(n)}. \]
In the following theorem we require the nonvanishing of the central value so the result is less complete than the metaplectic case (cf. Proposition \ref{prop:91}).

\begin{theorem_d}
Let $\pi$ be an irreducible cuspial automorphic representation of $\PGL_2(\AA)$ such that $\pi_v\simeq D_{2\kap_v}$ with $\kap_v\geq\frac{n}{2}$ for every $v\in\frkS_\infty$. 
If none of $\pi_\frkp$ is supercuspidal and $L\bigl(\frac{1}{2},\pi\bigl)\neq 0$, then the multiplicity of irreducible summands of $(\otimes_{v\in\frkS_\infty}\rmSp_n^{(-1)^{n/2}}(\pi_v))\otimes(\otimes_\frkp'\rmSp_n^{\eps_\frkp}(\pi_\frkp))$ in the space of cusp forms on $Sp_n(\AA)$ is $\frac{1}{2}(1+(-1)^{dn/2}\prod_\frkp\eps_\frkp)$. 
\end{theorem_d}

If $\prod_\frkp\vep_\frkp=(-1)^{dn/2}$, then there is a totally positive definite quadratic space $\calv_n^\vep$ of dimension $2n$ and discriminant $1$  such that $\calv_n^\vep(F_\frkp)$ is split or not according as $\vep_\frkp=+$ or $\vep_\frkp=-$. 
Let $\mathrm{S}Q_n^{\vep_\frkp}$ denote a parabolic subgroup of $\SO(\calv_n^\vep,F_\frkp)$ with Levi subgroup 
\[\GL_2(F_\frkp)\times\cdots\times\GL_2(F_\frkp)\times\SO(\calv_2^\vep,F_\frkp). \]
We write $\rmO_n^{\vep_\frkp}(\pi_\frkp)$ for the maximal semisimple quotient of  
\[\Ind^{\O(\calv_n^\vep,F_\frkp)}_{\mathrm{S}Q^{\vep_\frkp}_n}(\pi_\frkp\otimes\alp_\frkp^{(n-1)/2})\boxtimes(\pi_\frkp\otimes\alp_\frkp^{(n-3)/2})\boxtimes\cdots\boxtimes(\pi_\frkp\otimes\alp_\frkp^{3/2})\boxtimes(\pi_\frkp^{\vep_\frkp}\boxtimes\1)|_{\SO(\calv_2^\vep,F_\frkp)} \]
When $k\geq\frac{n}{2}$, we denote by $\rmO_n^{(-1)^{n/2}}(D_{2k})$ the irreducible representation of $\O(2n,\RR)$ whose highest weight is $\bigl(k-\frac{n}{2},k-\frac{n}{2},\dots,k-\frac{n}{2},0,\dots,0\bigl)$, where $k-\frac{n}{2}$ occurs $n$ times (see Section \ref{sec:3} for this notation). 

\begin{theorem_c}
Let $\pi$ be an irreducible cuspial automorphic representation of $\PGL_2(\AA)$ such that $\pi_v\simeq D_{2\kap_v}$ with $\kap_v\geq\frac{n}{2}$ for $v\in\frkS_\infty$. 
If none of $\pi_\frkp$ is supercuspidal and $\prod_\frkp\vep_\frkp=(-1)^{dn/2}$, then each irreducible summand of 
\[\O^\vep_n(\pi)\simeq(\otimes_{v\in\frkS_\infty}\rmO_n^{(-1)^{n/2}}(\pi_v))\otimes(\otimes_\frkp'\rmO_n^{\vep_\frkp}(\pi_\frkp))\] 
occurs in the space of automorphic forms on $\O(\calv_n^\vep,\AA)$ with multiplicity one. 
\end{theorem_c}


As an application, Theorem \ref{thm:101} identifies the Duke-Imamoglu-Ikeda lift with a sum of theta functions, which produces an identity analogous to the Siegel-Weil formula. 
Since $Sp_{2k}^{\eps_{2k}(\pi_\frkp)}(\pi_\frkp)$ is equivalent to a subrepresentation of a degenerate principal series whose degenerate Whittaker functional $w_\frkp^\xi$ is unique up to scalar and can be constructed canonically as a Jacquet integral, the $\xi$th Fourier coefficient of a Hilbert-Siegel cusp form $\vph=\otimes_v\vph_v\in\otimes_v'Sp_{2k}^{\eps_{2k}(\pi_v)}(\pi_v)$ is built out of the local quantity $w^\xi_\frkp(\vph_\frkp)$ and the $\det(2\xi)$th Fourier coefficient of a Hilbert cusp form in $\otimes_v'\Mp_1^{\psi_v}(\pi_v^{\eps_1(\pi_v)})$. 
On the other hand, we obtain $\vph$ as a theta lift from a definite orthogonal group, which involves theta functions and hence the position of lattice points, etc. 
Our identities thus contain a link between local and global information. 

We denote the integer ring of $F$ by $\frko$ and the different of $F/\QQ$ by $\frkd$. Put  
\begin{align*}
\Sym_j&=\{z\in\Mat_j|\trs z=z\}, &
\scrr_j&=\{\xi\in\Sym_j(F)|\tr(\xi z)\in\frko\text{ for }z\in \Sym_j(\frko)\}. 
\end{align*}
When $Q:\scrl\times\scrl\to\frko$ is a totally positive definite quadratic form, it is one of the classical tasks of number theory to determine the number of representations of $\xi\in\Sym_j(F)$ by $\scrl$   
\[N(\scrl,\xi)=\sharp\{(x_1,\dots,x_j)\in\scrl^j\;|\;Q(x_a,x_b)=2\xi_{ab}\,(1\leq a,b\leq j)\}. \]
Let $\Xi_i$ be the finite set of isometry classes of totally positive definite even unimodular lattices of rank $2i$. 
To solve this problem for $\scrl\in\Xi_i$, it is sufficient to determine the sum 
\[R(\xi,f)=\sum_{L\in\Xi_i}f(L)\frac{N(L,\xi)}{\sharp\O(L)}\]
for all Hecke eigenfunctions $f:\Xi_i\to\CC$. 
It is important to note that $R(\xi,f)$ appears in the $\xi$th Fourier coefficient of the theta lift of $f$. 
When $f:\Xi_{2k}\to\CC$ is associated to an irreducible everywhere unramified cuspidal automorphic representation of $\PGL_2(\AA)$ with parallel minimal weights $\pm 2k$ in the sense of Theorem \ref{thm:91}, we will give a product formula for $R(\xi,f)$ with $\xi\in\scrr_{2k}$. 

We denote the integer ring of $F_\frkp$ by $\frko_\frkp$ and the cardinality of the residue field $\frko/\frkp$ by $q_\frkp$. 
The norm and the order of a fractional ideal of $\frko$ are defined by $\frkN(\frkp^i)=q_\frkp^i$ and $\ord_\frkp\frkp^j=j$. 
For $\eta\in F^\times$ we write $\hat\chi^\eta:\AA^\times/F^\times\to\{\pm 1\}$ for the character corresponding to $F(\sqrt{\eta})/F$ via class field theory, denote its conductor by $\frkd^\eta$ and put 
\begin{align*}
f^\eta_\frkp&=\frac{1}{2}(\ord_\frkp\eta-\ord_\frkp\frkd^\eta)\in\ZZ, & 
\frkf_\eta&=\sqrt{\frac{|N_{F/\QQ}(\eta)|}{\frkN(\frkd^\eta)}}\in\QQ^\times. 
\end{align*}
We put $\del_\frkp(\eta)=1$ if $\sqrt{\eta}\in F_\frkp$, $\del_\frkp(\eta)=-1$ if $F_\frkp(\sqrt{\eta})$ is an unramified quadratic extension of $F_\frkp$, and $\del_\frkp(\eta)=0$ if $F_\frkp(\sqrt{\eta})$ is a ramified quadratic extension of $F_\frkp$. 
We define $\Psi_\frkp(\eta,X)\in\CC[X+X^{-1}]$ by 
\[\Psi_\frkp(\eta,X)=\begin{cases} 
\frac{X^{f_\frkp^\eta+1}-X^{-f_\frkp^\eta-1}}{X-X^{-1}}+q_\frkp^{-1/2}\del_\frkp(\eta)\frac{X^{f_\frkp^\eta}-X^{-f_\frkp^\eta}}{X-X^{-1}}&\text{if $f_\frkp^\eta\geq 0$, }\\
0 &\text{if $f_\frkp^\eta<0$. }
\end{cases}\]
For $\xi\in\Sym_{2m}(F)\cap\GL_{2m}(F)$ we set 
\begin{align*}
D(\xi)&=(-1)^m\det(2\xi), & 
\del_\frkp(\xi)&=\del_\frkp(D(\xi)), & 
f^\xi_\frkp&=f_\frkp^{D(\xi)}, &   
\frkf_\xi&=\frkf_{D(\xi)}.  
\end{align*}

The Siegel series associated to $\xi\in\scrr_{2m}\cap\GL_{2m}(F)$ and $\frkp$ is defined by 
\[b_\frkp(\xi,s)=\sum_{z\in\Sym_{2m}(F_\frkp)/\Sym_{2m}(\frko_\frkp)} \psi'_\frkp(-\tr(\xi z))\nu[z]^{-s}, \]
where $\nu[z]=[z\frko_\frkp^{2m}+\frko_\frkp^{2m}:\frko_\frkp^{2m}]$ and $\psi'_\frkp$ is an arbitrarily fixed additive character on $F_\frkp$ of order zero. 
We define the polynomial $\gam_\frkp(\xi,X)\in\ZZ[X]$ by 
\[\gam_\frkp(\xi,X)=\frac{1-X}{1-\del_\frkp(\xi)q_\frkp^mX}\prod_{j=1}^m(1-q_\frkp^{2j}X^2). \]
There exists a monic polynomial $F_\frkp(\xi,X)\in\ZZ[X]$ such that 
\begin{align*}
b_\frkp(\xi,s)&=F_\frkp(\xi,q^{-s}_\frkp)\gam_\frkp(\xi,q_\frkp^{-s}), & 
F_\frkp(\xi,X)=q_\frkp^{2m+1}X^{2f^\xi_\frkp}F_\frkp(\xi,q_\frkp^{-2m-1}X^{-1})
\end{align*} 
(see \cite{Kat,I5}). 
We define $\wtl{F}_\frkp(\xi,X)\in\CC[X+X^{-1}]$ by 
\[\wtl{F}_\frkp(\xi,X)=X^{-f^\xi_\frkp}F_\frkp(\xi,q_\frkp^{-(2m+1)/2}X). \]
The Kohnen plus subspace $S^+_{(2k+1)/2}$ of the space of Hilbert cusp forms of weight $k+\frac{1}{2}$ with respect to the congruence subgroup $\Gam_1[\frkd^{-1},4\frkd]$ (see (\ref{tag:101}) for its definition) is defined in \cite{Ko} if $F=\QQ$ and in Definition 13.2 of \cite{HI} in general. 
One obtains the following result by taking $\xi$th Fourier coefficients of the identity of Corollary \ref{cor:101}. 

\begin{theorem}\label{thm:11}
Let $\pi\simeq D_{2k}^{\boxtimes d}\otimes(\otimes'_\frkp I(\alp_\frkp^{s_\frkp},\alp_\frkp^{-s_\frkp}))$ be an irreducible cuspidal automorphic representation of $\PGL_2(\AA)$. 
Assume that $dk$ is even. 
Let $f:\Xi_{2k}\to\CC$ be a common eigenfunction of all Hecke operators whose standard $L$-function is $\prod_{j=1}^{2k}L\bigl(s+k+\frac{1}{2}-j,\pi\bigl)$. 
\begin{enumerate}
\renewcommand\labelenumi{(\theenumi)}
\item If $L\bigl(\frac{1}{2},\pi\bigl)=0$, then $R(\xi,f)=0$ for every $\xi\in\scrr_{2k}$.  
\item If $L\bigl(\frac{1}{2},\pi\bigl)\neq 0$, then $R(\xi,f)=0$ unless $\xi\in\scrr_{2k}\cap\Sym_{2k}^+$, and 
\[R(\xi,f)=c(\det(2\xi))\frkf_\xi^{(2k-1)/2}\prod_\frkp\wtl{F}_\frkp(\xi,q_\frkp^{s_\frkp}) \]
for every $\xi\in\scrr_{2k}\cap\Sym_{2k}^+$, where the constant $c(\eta)$ satisfies $c(\eta a^2)=c(\eta)\prod_{v\in\frkS_\infty}\sgn_v(a)^k$ for every $a\in F^\times$, and 
\[0\neq h(\calz)=\sum_\eta c(\eta)\bfe_\infty(\eta \calz)\frkf_{(-1)^k\eta}^{(2k-1)/2}\prod_\frkp\Psi_\frkp((-1)^k\eta,q_\frkp^{s_\frkp})\in S^+_{(2k+1)/2}. \]
\end{enumerate}
\end{theorem}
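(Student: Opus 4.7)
The plan is to extract the $\xi$th Fourier coefficient, $\xi \in \scrr_{2k}$, from the identity of Corollary \ref{cor:101}, which (as the excerpt indicates) equates the Duke-Imamoglu-Ikeda lift $\operatorname{DII}(h)$ of a Kohnen plus-space form $h \in S^+_{(2k+1)/2}$ with a scalar multiple of the weighted theta series
\[
\Theta_f(\calz) \;=\; \sum_{L\in \Xi_{2k}}\frac{f(L)}{\sharp\,\O(L)}\,\Theta_L(\calz).
\]
Both sides will be Hilbert-Siegel cusp forms of parallel weight $2k$ on $Sp_{2k}(\AA)$, and the hypothesis that $dk$ is even supplies the parity condition $\prod_\frkp \vep_\frkp = (-1)^{dk}$ needed to place $\calv_{2k}^+$ in the setup of Theorem \ref{thm:91}.

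For part (1), if $L(\tfrac12,\pi)=0$, I would argue that $\Theta_f$ is exactly the global theta lift of $f$ from $\O(\calv_{2k}^+,\AA)$ to $Sp_{2k}(\AA)$ attached to the standard Schwartz datum (characteristic functions of the relevant lattices at finite primes, Gaussian at infinity). The nonvanishing criterion for this theta lift, furnished by the Rallis inner product formula, identifies its nonvanishing with $L(\tfrac12,\pi)\neq 0$, exactly as in the proof of Theorem \ref{thm:92}. Hence $\Theta_f\equiv 0$, and comparing Fourier expansions yields $R(\xi,f)=0$ for every $\xi\in\scrr_{2k}$.

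For part (2), cuspidality of $\Theta_f$ forces the $\xi$th Fourier coefficient to vanish on singular and indefinite $\xi$, which takes care of $R(\xi,f)=0$ for $\xi\in\scrr_{2k}\setminus\Sym_{2k}^+$. For $\xi\in\scrr_{2k}\cap\Sym_{2k}^+$, the archimedean factor $\bfe_\infty(\tr(\xi\calz))$ aside, the $\xi$th coefficient of $\Theta_f$ equals $R(\xi,f)$, while that of $\operatorname{DII}(h)$ is given by the explicit Fourier coefficient formula for DII lifts (\cite{I5} over $\QQ$, \cite{HI} over totally real $F$) as
\[
c(D(\xi))\,\frkf_\xi^{(2k-1)/2}\prod_\frkp \wtl{F}_\frkp(\xi, q_\frkp^{s_\frkp}),
\]
where $c(\eta)$ is the $\eta$th Fourier coefficient of $h$. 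Matching the two sides, and absorbing the overall constant of proportionality from Corollary \ref{cor:101} into $c(\cdot)$, produces the asserted product formula. Nonvanishing of $h$ follows from the injectivity of the Shimura correspondence under the hypothesis $L(\tfrac12,\pi)\neq 0$, and the explicit shape of $h(\calz)$---with local factors $\Psi_\frkp((-1)^k\eta,q_\frkp^{s_\frkp})$---comes from the Hina-Ikeda description of half-integral weight Kohnen plus forms attached to everywhere-unramified $\pi$. The twist invariance $c(\eta a^2)=c(\eta)\prod_v\sgn_v(a)^k$ encodes the weight $k+\tfrac12$ transformation law.

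The main obstacle, sitting at the input to this argument rather than in the Fourier extraction itself, is pinning down the exact proportionality constant in Corollary \ref{cor:101} cleanly enough to see that it is nonzero precisely when $L(\tfrac12,\pi)\neq 0$; this requires careful bookkeeping of archimedean periods and local zeta-integral normalizations at the infinitely many finite primes (trivial here, since $\pi$ is everywhere unramified at finite places) and at the archimedean places, where the parallel weight $2k$ assumption pins down explicit $\Gamma$-factor ratios.
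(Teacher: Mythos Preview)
Your proposal is correct and follows the paper's own approach: the paper derives Theorem \ref{thm:11} by reading off the $\xi$th Fourier coefficients from the identity of Corollary \ref{cor:101}, and your handling of part (1) via the Rallis inner product (the theta lift of $f$ vanishes when $L(\tfrac12,\pi)=0$, since this forces $L(1,\sigma)=0$ for $\sigma=\O_{2k}(\pi)$) fills in what the paper leaves implicit. Two small corrections: the Kohnen plus space reference is Hiraga--Ikeda \cite{HI} (not ``Hina--Ikeda''), the explicit Fourier expansion of the Duke--Imamoglu--Ikeda lift over totally real $F$ is in \cite{IY} rather than \cite{HI} or \cite{I5}, and your closing ``main obstacle'' is not actually an obstacle here---Corollary \ref{cor:101} already asserts that the theta sum is a \emph{nonzero} cusp form equal to $\calf_k$ up to scalar, so no separate bookkeeping of the proportionality constant is needed.
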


\begin{remark}\label{rem:12}
There are three further observations to make here. 
\begin{enumerate}
\renewcommand\labelenumi{(\theenumi)}
\item\label{rem:121} Since $f$ and $h$ are determined uniquely up to scalar, one gets relations between the ratios of the various $R(\xi,f)$ and those of $c(\det(2\xi))$. 
For example, if $\det\xi=\det\xi'$ and $\frkf_\xi=\frkf_{\xi'}=1$, then $R(\xi,f)=R(\xi',f)$. 
\item\label{rem:122} When $F=\QQ$, Kohnen and Zagier \cite{KZ} have given an explicit formula for $c(\eta)^2$. 
It is noteworthy that our formula can give information on the sign of $c(\eta)$. 
See Theorem 12.3 of \cite{HI} for an extension of the Kohnen-Zagier formula to Hilbert cusp forms. 
In particular, $c(\eta)\neq 0$ if and only if $L\bigl(\frac{1}{2},\pi\otimes\hat\chi^{(-1)^k\eta}\bigl)\neq 0$. 
\item\label{rem:123}  If $f\equiv 1$, then the Siegel formula \cite{Si} says that for $\xi\in\scrr_{2k}\cap\Sym_{2k}^+$
\[\frac{R(\xi,1)}{\sum_{L\in\Xi_{2k}}\frac{1}{E(L)}}=
\frac{N_{F/\QQ}(\det\xi)^{(2k-1)/2}}{\frkN(\frkd)^{k(6k-1)/2}}\prod_{j=0}^{2k-1}\frac{\pi^{d(4k-j)/2}}{\Gam\bigl(\frac{4k-j}{2}\bigl)^d}\prod_\frkp b_\frkp(\xi,2k). \]
\end{enumerate} 
\end{remark}

\subsection*{Acknowledgement}
The author is partially supported by JSPS Grant-in-Aid for Young Scientists (B) 26800017. 
We would like to thank Tamotsu Ikeda for stimulating discussions. 

\section*{Notation}

We denote by $\NN$, $\ZZ$, $\QQ$, $\RR$, $\CC$, $\RR^\times_+$, $\SS$ and $\mu_k$ the set of strictly positive rational integers, the ring of rational integers, the fields of rational, real, complex numbers, the groups of strictly positive real numbers, complex numbers of absolute value $1$ and $k^\mathrm{th}$ roots of unity. 
We define the additive character $\bfe:\RR\to\SS$ by $\bfe(x)=e^{2\pi\iu x}$ and the sign character $\sgn:\RR^\times\to \mu_2$ by $\sgn(t)=t/|t|$. 
We also denote the determinant characters of orthogonal groups by $\sgn$.
When $G$ is a totally disconnected locally compact group or a real Lie group, we denote the trivial representation of $G$ by $\1_G$ and write $\cals(G)$ for the space of Schwartz-Bruhat functions on $G$. 

The ground field $F$ is a totally real number field or its completion. 
When $F$ is global, we denote the ad\`{e}le ring of $F$ by $\AA$, the set of real primes of $F$ by $\frkS_\infty$ and the $v$-completion of $F$ by $F_v$ for  each prime $v$ of $F$. 
We reserve the letter $\frkp$ for nonarchimedean primes of $F$ and do not use $\frkp$ to denote infinite primes.
When $F$ is local, we write $\Ome(F^\times)$ for the group of all continuous homomorphisms from $F^\times$ to $\CC^\times$ and let $\alp:F^\times\to\RR^\times_+$ denote the normalized absolute value. 
We define $\Re\mu$ as the unique real number such that $\mu\alp^{-\Re\mu}$ is unitary. 

When $\pi$ is a discrete series representation of $\GL_2(F)$, we write $\pi^-$ for the Jacquet-Langlands lift of $\pi$ to the multiplicative group of the quaternion division algebra over $F$. 
To be uniform, we set $\pi^+=\pi$. 

When $F$ is nonarchimedean, we associate to a pair of characters $\mu_1,\mu_2\in\Ome(F^\times)$ the representation $I(\mu_1,\mu_2)$ of $\GL_2(F)$ on the space of all smooth functions $f$ on $\GL_2(F)$ satisfying 
\[f\left(\begin{pmatrix} a_1 & b \\ 0 & a_2\end{pmatrix}g\right)=\mu_1(a_1)\mu_2(a_2)\alp(a^{}_1a_2^{-1})^{1/2}f(g)\]
for all $a_1,a_2\in F^\times$; $b\in F$ and $g\in\GL_2(F)$. 
This representation is irreducible unless $\mu^{}_1\mu_2^{-1}\in\{\alp,\alp^{-1}\}$. 
Let $\St\subset I(\alp^{1/2},\alp^{-1/2})$ stand for the Steinberg representation of $\GL_2(F)$. 

For each positive rational integer $\kap$ we denote the discrete series representation of $\PGL_2(\RR)$ with extremal weights $\pm 2\kap$ by $D_{2\kap}$. 
For $\eta\in F^\times$ we write $\hat\chi^\eta$ for the character which corresponds to $F(\sqrt{\eta})/F$ via class field theory. 
The root numbers of these representations are well-known. 

\begin{lemma}\label{lem:11}
\begin{enumerate}
\renewcommand\labelenumi{(\theenumi)}
\item\label{lem:111} If $-\frac{1}{2}<\Re\mu<\frac{1}{2}$, then $\vep\left(\frac{1}{2},I(\mu,\mu^{-1})\right)=\mu(-1)$. 
\item\label{lem:112} If we put $d_\eta=[F(\sqrt{\eta}):F]$, then $\vep\bigl(\frac{1}{2},\St\otimes\hat\chi^\eta\bigl)=(-1)^{d_\eta}\hat\chi^\eta(-1)$. 
\item\label{lem:113} $\vep\left(\frac{1}{2},D_{2\kap}\right)=(-1)^\kap$. 
\end{enumerate}
\end{lemma}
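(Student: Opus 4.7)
My approach is to reduce all three identities to standard local formulas for epsilon factors of Weil--Deligne representations via the local Langlands correspondence for $\GL_2$.

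For (\ref{lem:111}), since $I(\mu,\mu^{-1})$ is parabolically induced from the character $(\mu,\mu^{-1})$ of the standard Borel of $\GL_2(F)$, its $L$-parameter is the semisimple representation $\mu\oplus\mu^{-1}$, and the multiplicativity of local epsilon factors yields
$\vep(s,I(\mu,\mu^{-1}))=\vep(s,\mu)\vep(s,\mu^{-1})$.
Tate's local functional equation for $\GL_1$ reads $\vep(s,\mu)\vep(1-s,\mu^{-1})=\mu(-1)$; evaluating at $s=1/2$ gives the claim. The restriction $|\Re\mu|<1/2$ is only used to ensure that the principal series is irreducible, so that the left-hand side actually names a representation; the epsilon identity itself is about the parameter.

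For (\ref{lem:113}), the $L$-parameter of $D_{2\kap}$ is the two-dimensional representation $\Ind^{W_\RR}_{W_\CC}\chi_\kap$ of the real Weil group, where $\chi_\kap(z)=(z/\bar{z})^\kap$ (i.e. $D_{2\kap}$ is the discrete series of extremal weights $\pm 2\kap$). Combining Langlands' explicit formula $\vep(1/2,\chi_\kap,\psi_\CC)=i^{2\kap}$ with the Langlands constant $\lam(\CC/\RR,\bfe)$ of the inductive step, one obtains $\vep(1/2,D_{2\kap})=i^{2\kap}=(-1)^\kap$. This is the familiar archimedean root number of a holomorphic newform of weight $2\kap$.

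The main obstacle is (\ref{lem:112}). The $L$-parameter of $\St\otimes\hat\chi^\eta$ is the special Weil--Deligne representation $\mathrm{sp}_2\otimes\hat\chi^\eta$, whose Frobenius-semisimplification is $\hat\chi^\eta\alp^{1/2}\oplus\hat\chi^\eta\alp^{-1/2}$ with nontrivial monodromy operator linking the two lines. I would split into cases on $d_\eta$. If $d_\eta=1$, then $\sqrt{\eta}\in F$, $\hat\chi^\eta$ is trivial, and the assertion reduces to the classical value $\vep(1/2,\St)=-1$, which follows either from Tate's local functional equation applied to the special Weil--Deligne representation or from the theory of Whittaker functionals for the Steinberg. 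If $d_\eta=2$, then $\hat\chi^\eta$ is a nontrivial quadratic character of $F^\times$; I would apply Deligne's formula expressing the epsilon factor of a special Weil--Deligne representation in terms of that of its semisimplification, together with the correction from the part killed by the monodromy, and then use $\vep(1/2,\hat\chi^\eta)^2=\hat\chi^\eta(-1)$ to collapse the product of characters into the sign $\hat\chi^\eta(-1)$. The delicate point is to treat the unramified, tamely ramified, and dyadic local fields on an equal footing, keeping careful track of the conductor of $\hat\chi^\eta$ and the $L$-factor correction; alternatively one can route the computation through the Jacquet--Langlands correspondence and compute the epsilon factor of the one-dimensional character $\hat\chi^\eta\circ\mathrm{Nrd}$ of $D_v^\times$, which shares its Langlands parameter with $\St\otimes\hat\chi^\eta$.
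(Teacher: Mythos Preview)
The paper does not actually prove Lemma~\ref{lem:11}; it prefaces the statement with ``The root numbers of these representations are well-known'' and offers no argument. Your outline is a correct way to fill in the details. One small suggestion on (\ref{lem:112}): it clarifies matters to separate explicitly the unramified and ramified cases of $\hat\chi^\eta$ when $d_\eta=2$. When $\hat\chi^\eta$ is unramified, the monodromy correction in Deligne's formula for $\mathrm{sp}_2\otimes\hat\chi^\eta$ contributes the factor $-\hat\chi^\eta(\varpi)=+1$, while the semisimple part has trivial epsilon; when $\hat\chi^\eta$ is ramified, inertia has no invariants, the correction vanishes, and one is left with $\vep(1/2,\hat\chi^\eta\alp^{1/2})\vep(1/2,\hat\chi^\eta\alp^{-1/2})=\vep(1/2,\hat\chi^\eta)^2=\hat\chi^\eta(-1)$. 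Either way one lands on $\hat\chi^\eta(-1)=(-1)^{d_\eta}\hat\chi^\eta(-1)$, as claimed. For (\ref{lem:113}) the bookkeeping with $\lam(\CC/\RR,\bfe)$ is convention-dependent, but the outcome $(-1)^\kap$ is standard and your sketch is fine.
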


When $\pi\simeq\otimes_v'\pi^{}_v$ is an irreducible cuspidal automorphic representation of $\PGL_2(\AA)$, the set $\frkS_\pi$ consists of finite primes $\frkp$ of $F$ such that $\pi_\frkp$ is a discrete series representation of $\PGL_2(F_\frkp)$ and its subset $\frkS_\pi^-$ consists of primes $\frkp$ such that $\pi_\frkp\simeq \St_\frkp$. 
Let $G$ be a reductive algebraic group over $F$ and $\bfG$ a finite central covering of $G(\AA)$ such that $G(F)$ lifts to a subgroup of $\bfG$. 
We write $\scra_\cusp(\bfG)$ for the space of cusp forms on $G(F)\bsl\bfG$ and denote the multiplicity of an abstract representation $\vPi$ of $\bfG$ in $\scra_\cusp(\bfG)$ by $\rm\rmm_\cusp(\vPi)$.

\part{CAP representations for metaplectic groups}\label{part:1}

\section{Theta correspondence}\label{sec:2}

We review the necessary background and framework for the theta correspondence for the dual pair $\Mp(W_n)\times\O(V)$. 
For a detailed treatment one can consult \cite{G,GG,MVW,GI,Y3,GT2}. 
Let $W_n$ be a $2n$-dimensional left vector space over $F$ equipped with a nondegenerate skew symmetric form $\La\;,\;\Ra$ and $V$ an $m$-dimensional right vector space with a nondegenerate symmetric form $(\;,\;)$. 
The associated symplectic and orthogonal groups are denoted by $Sp(W_n)$ and $\O(V)$. 
We take matrix representation 
\begin{align*}
Sp_n&=Sp(W_n)=\{g\in\GL_{2n}\;|\;gJ_n\trs g=J_n\}, & 
J_n&=\begin{pmatrix} 0 & -\ono_n \\ \ono_n & 0\end{pmatrix}
\end{align*}
by choosing a Witt basis of $W_n$. 
We consider the symplectic vector space $(V\otimes W_n,(\;,\;)\otimes\La\;,\;\Ra)$ of dimension $2mn$. 
We have natural homomorphisms 
\[Sp(W_n)\hookrightarrow Sp(V\otimes W_n)\hookleftarrow \O(V)\hookleftarrow\SO(V). \] 
The groups $\mathrm{O}(V)$ and $Sp(W_n)$ form a dual pair inside $Sp(V\otimes W_n)$. 
 
Fix a place $v$ of $F$ and temporarily suppress the subscript $_v$ from notation. 
For the time being, let $\psi$ be an arbitrarily fixed nontrivial additive character on $F=F_v$.  
The metaplectic group $\Mp(W_n)$ is the nontrivial central extension of $Sp(W_n)$ by $\mu_2$.  
For a subgroup $H$ of $Sp_n$ we denote the inverse image of $H$ in $\Mp(W_n)$ by $\til H$. 

The character $\psi$ determines a Weil representation of the metaplectic double cover of the ambient symplectic group $Sp(V\otimes W_n)$. 
We obtain the representation $\ome^\psi_V$ as its pull-back to $\O(V)\times \Mp(W_n)$. 
If $m$ is even, then $\ome^\psi_V$ descends to a representation of $\O(V)\times Sp(W_n)$. 
Note that $\ome^{\psi^\eta}_V\simeq \ome^\psi_{\eta V}$ for $\eta\in F^\times$, where $\eta V$ is the space $V$ equipped with the quadratic form $\eta\cdot (\;,\;)$. 

When $m$ is odd (resp. even) and $\vPi$ is an irreducible admissible genuine (resp. nongenuine) representation of $\Mp(W_n)$, the maximal quotient of $\ome^\psi_V$ on which $\Mp(W_n)$ acts as a multiple of $\vPi$ is of the form $\vPi\boxtimes\Tht^\psi_V(\vPi)$, where $\Tht^\psi_V(\vPi)$ is a representation of $\O(V)$. 
We set $\Tht^\psi_V(\vPi)=0$ if $\vPi$ does not occur as a quotient of $\ome^\psi_V$. 
Let $\tht^\psi_V(\vPi)$ be the maximal semisimple quotient of $\Tht^\psi_V(\vPi)$. 
When $\sig$ is an irreducible admissible representation of $\O(V)$, we can analogously define representations $\Tht^\psi_n(\sig)$ and $\tht^\psi_n(\sig)$ of $\Mp(W_n)$. 
Then $\tht^\psi_V(\vPi)$ and $\tht^\psi_n(\sig)$ are either zero or irreducible, and $\vPi\mapsto\tht^\psi_V(\vPi)$ and $\sig\mapsto\tht^\psi_n(\sig)$ are bijections converse to each other on their domains by the Howe conjecture proved by Howe \cite{H} and Gan--Takeda \cite{GT2}. 

\begin{proposition}[\cite{AB,Z,GS}]\label{prop:21}
If $\vPi$ is an irreducible admissible genuine representation of $\Mp(W_n)$, then there is exactly one equivalence class of quadratic spaces $V$ of dimension $2n+1$ and discriminant $1$ such that $\tht^\psi_V(\vPi)$ is nonzero. 
\end{proposition}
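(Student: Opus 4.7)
My plan is to deduce the proposition from the local theta dichotomy (Kudla-Rallis conservation relation) for the dual pair $(\Mp(W_n),\O(V))$ in the tower of odd-dimensional quadratic spaces of trivial discriminant, as developed in the cited works \cite{AB,Z,GS}.

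I would first fix the relevant Witt-tower structure. Over a non-archimedean $F_v$, the two isometry classes of quadratic spaces of dimension $2n+1$ and trivial discriminant are distinguished by the Hasse invariant $\epsilon\in\{\pm\}$ and sit at corresponding levels of two Witt towers whose anisotropic kernels have dimensions $1$ and $3$, respectively. The archimedean case admits an analogous tower organization indexed by the signature of the definite anisotropic kernel. For each tower indexed by $\epsilon$, let $m^\epsilon(\vPi)$ denote the smallest $m$ such that $\tht^\psi_{V^\epsilon_m}(\vPi)\neq 0$; by the standard tower property, nonvanishing persists at every higher rung.

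The main input is the conservation relation
\[
m^+(\vPi)+m^-(\vPi)=4n+4,
\]
proved in the non-archimedean case in \cite{Z,GS} and with a direct archimedean analogue in \cite{AB}. Since every rung of either tower has odd dimension, both $m^\pm(\vPi)$ are odd, and the sum $4n+4$ forces exactly one of them to be $\leq 2n+1$: both being $\leq 2n+1$ would force the sum to be at most $4n+2$, while both being $\geq 2n+3$ would force the sum to be at least $4n+6$. Combining this with the tower property, precisely one $\epsilon$ yields $\tht^\psi_{V^\epsilon}(\vPi)\neq 0$ at level $2n+1$, which proves both existence and uniqueness of the claimed quadratic space.

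The only substantive difficulty, and the main obstacle, is the conservation relation itself. Its proof is obtained in the $p$-adic setting by Jacquet-module analysis of the Weil representation $\ome^\psi_V$ along maximal parabolics together with induction on symplectic rank (Sun-Zhu, Gan-Savin), and in the archimedean setting by $K$-type calculations and derived-functor realizations (Adams-Barbasch). I would invoke these as a black box rather than reproduce them.
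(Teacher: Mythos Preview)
The paper supplies no proof here, only the citation to \cite{AB,Z,GS}, so your proposal already goes beyond what the paper does. Your reduction of the $p$-adic case to the conservation relation is correct, and the constant $4n+4$ is the right one (for instance, when $n=1$ the representation $\St_1^\psi$ has $m^-=3$ and $m^+=5$); the parity argument then yields the dichotomy at level $2n+1$ exactly as you describe. This is, however, a different route from the cited sources: Zorn \cite{Z} proves the dichotomy \emph{directly} via the doubling zeta integral and the structure of the degenerate principal series at the unitary point, using the doubling epsilon factor to determine which of $V^+,V^-$ carries the nonzero lift, rather than deducing it from conservation (which was established in full generality only later, by Sun--Zhu \cite{SZ}).

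One gap to flag: your two-tower picture is accurate over a $p$-adic field but not over $\RR$, where there are $n+1$ isometry classes of $(2n{+}1)$-dimensional quadratic spaces with the given discriminant, lying in as many distinct Witt towers. A single conservation identity relating one pair of towers does not by itself yield ``exactly one'' among $n+1$ candidates once $n\geq 2$. Adams--Barbasch \cite{AB} handle the real case by a direct analysis of $K$-types in the Fock model; since you already invoke \cite{AB} as a black box this is not fatal, but you should be explicit that the archimedean argument is structurally different and is not merely the ``direct archimedean analogue'' of the $p$-adic conservation identity.
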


When $m$ is odd, we often view $\tht^\psi_V(\vPi)$ as a representation of $\SO(V)$. 
When $m=2n+1$ and $\sig$ is an irreducible admissible representation of $\SO(V)$, exactly one extension of $\sig$ to $\O(V)$ participates in the Howe correspondence with $\Mp(W_n)$ (see \cite{AB,GS}). 
We denote the unique extension by $\hat\sig$. 

We go back to the global setting. 
Let $\ome^\psi_V\simeq\otimes'_v\ome^{\psi_v}_V$ be the global Weil representation associated to $\psi$. 
There is a natural $Sp_{mn}(F)$-invariant map $\Tht:\ome^\psi_V\to\CC$. 
Let $\vPi$ be an irreducible cuspidal automorphic representation of $\Mp(W_n)_\AA$. 
For $f\in\vPi$ and $\phi\in\ome^\psi_V$ we set 
\[\tht^\psi_V(h;f,\phi)=\int_{Sp_n(F)\bsl Sp_n(\AA)}\overline{f(g)}\Tht(\ome^\psi_V(g,h)\phi)\, \d g.\]
Then $\tht^\psi_V(f,\phi)$ is an automorphic form on $\O(V)$. 
We write $\tht^\psi_V(\vPi)$ for the subspace of the space of automorphic forms on $\O(V)$ spanned by $\tht^\psi_V(f,\phi)$ for all $f\in\vPi$ and $\phi\in\ome^\psi_V$. 
It is a simple consequence of the Howe conjecture that if $\tht^\psi_V(\vPi)$ is nonzero and contained in the space of square-integrable automorphic forms on $\O(V)$, then $\tht^\psi_V(\vPi)\simeq\otimes_v'\tht^{\psi_v}_V(\vPi_v)$.  
Given an irreducible cuspidal automorphic representation $\sig\simeq\otimes_v'\sig_v$ of $\O(V,\AA)$, one can define its theta lift $\tht^\psi_n(\sig)$ to $\Mp(W_n)_\AA$ in a similar fashion. 

Refining the work of Piatetski-Shapiro and Rallis \cite{PSR}, Lapid and Rallis \cite{LR} have developed a local theory of the doubling zeta integral and defined the local factors $\vep(s,\sig_v,\psi_v)$ and $L(s,\sig_v)$ properly. 
They satisfy a number of properties which characterize them uniquely. 
Wee Teck Gan \cite{G2} treated the analogous theory for metaplectic groups and defined the local factors $\vep(s,\vPi_v,\psi_v)$ and $L_{\psi_v}(s,\vPi_v)$. 
Define the complete $L$-functions by the Euler products $L(s,\sig)=\prod_vL(s,\sig_v)$ and $L_\psi(s,\vPi)=\prod_vL_{\psi_v}(s,\vPi_v)$. 
If $\dim V=2n+1$ and $\sig_v=\tht^{\psi_v}_V(\vPi_v)$, then Corollary 11.2 of \cite{GS} gives the identities 
\begin{align}
\vep(s,\vPi_v,\psi_v)&=\vep(s,\sig_v,\psi_v), & 
L_{\psi_v}(s,\vPi_v)&=L(s,\sig_v). \label{tag:21}
\end{align}

The following results were proved through the works of many authors. 

\begin{proposition}\label{prop:22}
\begin{enumerate}
\renewcommand\labelenumi{(\theenumi)}
\item\label{prop:221} Let $\vPi$ be an irreducible genuine cuspidal automorphic representation of $\Mp(W_n)_\AA$ such that $L_\psi(s,\vPi)$ is entire and $L_\psi\bigl(\frac{1}{2},\vPi\bigl)\neq 0$. 
Then there is a unique equivalence class of quadratic spaces $V$ of dimension $2n+1$ and discriminant $1$ such that $\tht^\psi_V(\vPi)$ is nonzero and cuspidal. 
Moreover, 
\[\rmm_\cusp(\tht^\psi_V(\vPi))=\rmm_\cusp(\vPi). \] 
\item\label{prop:222} Let $V$ be a quadratic space of dimension $2n+1$ and discriminant $1$. 
If $\sig$ is an irreducible cuspidal automorphic representation of $\SO(V,\AA)$ such that $L(s,\sig)$ is entire and $L\bigl(\frac{1}{2},\sig\bigl)\neq 0$, then there is a unique extension of $\sig$ to a cuspidal automorphic representation $\hat\sig$ of $\O(V,\AA)$ such that 
$\tht^\psi_n(\hat\sig)$ is nonzero and cuspidal. 
Moreover,  
\[\rmm_\cusp(\tht^\psi_n(\hat\sig))=\rmm_\cusp(\sig). \] 
\end{enumerate}
\end{proposition}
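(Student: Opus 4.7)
The plan is to combine the local Howe duality of Proposition \ref{prop:21} with the Rallis inner product formula for the doubling integral and a Rallis tower argument. I focus on part (\ref{prop:221}); part (\ref{prop:222}) is symmetric once the identity (\ref{tag:21}) is in hand.

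First I would globalize the local theta correspondents. At each place $v$ Proposition \ref{prop:21} produces a unique quadratic space $V_v$ of dimension $2n+1$ and trivial discriminant such that $\tht^{\psi_v}_{V_v}(\vPi_v)\neq 0$, so any candidate $V$ with $\tht^\psi_V(\vPi)\neq 0$ is automatically unique if it exists. The identity (\ref{tag:21}), applied together with the product formula for the global epsilon factor of $\vPi$, is then used to check that the product of Hasse invariants of the local $V_v$ equals $1$, so the family $\{V_v\}$ descends to a global quadratic space $V$ of dimension $2n+1$ and discriminant $1$.

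Next I would establish nonvanishing and cuspidality. For nonvanishing I would appeal to the Rallis inner product formula for $\Mp(W_n)\times\O(V)$ at the matched dimension $2n+1$, in the metaplectic form developed in \cite{G2} after \cite{PSR,LR}: it expresses the Petersson norm of a global theta lift as a product of local doubling factors (each a nonzero multiple of a local Howe pairing, hence nonvanishing precisely when $\tht^{\psi_v}_{V_v}(\vPi_v)\neq 0$) times the central value $L_\psi\bigl(\frac{1}{2},\vPi\bigr)$, so the hypothesis $L_\psi\bigl(\frac{1}{2},\vPi\bigr)\neq 0$ forces $\tht^\psi_V(\vPi)\neq 0$. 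For cuspidality I would run the Rallis tower argument: the constant term of a theta integral along any standard parabolic of $\O(V)$ equals a theta integral to the preceding member of the Witt tower, and nonvanishing of such a smaller lift would force a pole of $L_\psi(s,\vPi)$ at a positive half-integer. Since $L_\psi(s,\vPi)$ is entire, every such constant term vanishes and $\tht^\psi_V(\vPi)$ is cuspidal.

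Multiplicity preservation is then formal: the global theta integral gives a nonzero $\O(V,\AA)$-intertwiner from the $\vPi$-isotypic part of $\scra_\cusp(\Mp(W_n))$ into the $\tht^\psi_V(\vPi)$-isotypic cuspidal spectrum of $\O(V,\AA)$; the Rallis inner product formula certifies injectivity, and the symmetric version of the construction, starting from (\ref{prop:222}), certifies surjectivity, yielding $\rmm_\cusp(\tht^\psi_V(\vPi))=\rmm_\cusp(\vPi)$. Part (\ref{prop:222}) is proved by the mirror image of these three steps, using (\ref{tag:21}) to transport the $L$-function hypothesis to the metaplectic side and letting the local Howe correspondence dictate the unique extension $\hat\sig$ of $\sig$ at each place. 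The main obstacle is the cuspidality step: one has to pin down the poles of the doubling Eisenstein series over the metaplectic cover, verify that they are precisely controlled by $L_\psi(s,\vPi)$ together with the correct auxiliary abelian $L$-factors, and rule out residual contributions from the non-split members of the Witt tower. Everything else reduces to bookkeeping once Proposition \ref{prop:21} and the identity (\ref{tag:21}) are available.
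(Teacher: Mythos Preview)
Your plan is essentially the same as the paper's: nonvanishing via the Rallis inner product formula at the boundary range (central $L$-value), cuspidality via the tower argument (entire $L$-function kills the smaller lifts), and multiplicity preservation by running the theta lift in both directions. The only cosmetic difference is that the paper packages the first two steps as direct citations of \cite[Theorem 10.1, Lemma 10.2]{Y3} and the multiplicity step as \cite[Proposition 2.14]{G}, whereas you unpack the mechanisms behind those references; your local-to-global construction of $V$ via Proposition \ref{prop:21} and the epsilon product is exactly what underlies the cited theorem.
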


\begin{proof}
Theorem 10.1 of \cite{Y3} gives the quadratic space $V$ of dimension $2n+1$ and discriminant $1$ such that $\tht^\psi_V(\vPi)$ is nonzero. 
Since $L_\psi(s,\vPi)$ is entire, if $\dim U$ is odd, $\dim U\leq 2n-1$ and $\chi^U=1$, then $\tht^\psi_U(\vPi)$ is zero by Lemma 10.2 of \cite{Y3}. 
Thus $\tht^\psi_V(\vPi)$ is cuspidal by the tower property of Rallis. 
Moreover, the nonvanishing of $\tht^\psi_V(\vPi)$ depends only on $\vPi$ as an abstract representation, not on the embedding of $\vPi$ into $\scra_\cusp(\Mp(W_n))$. 
We can apply \cite[Proposition 2.14]{G} to see that the multiplicity of $\tht^\psi_V(\vPi)$ in $\tht^\psi_V(\scra_\cusp(\Mp(W_n)))$ equals $\rmm_\cusp(\vPi)$. 
Thus 
\[\rmm_\cusp(\tht^\psi_V(\vPi))\geq \rmm_\cusp(\vPi). \]

Put $\sig=\tht^\psi_V(\vPi)$. 
Then $L(s,\sig)=L_\psi(s,\vPi)$ by (\ref{tag:21}). 
By assumption $L(s,\sig)$ is entire and $L\bigl(\frac{1}{2},\sig\bigl)\neq 0$. 
We can find the extension $\hat\sig$ of $\sig$ such that $\tht^\psi_n(\hat\sig)$ is an irreducible cuspidal automorphic representation of $\Mp(W_n)_\AA$ via the same type of reasoning. 
Since $\vPi\simeq\tht^\psi_n(\hat\sig)$ by symmetry, Proposition 2.14 of \cite{G} again shows that 
\[\rmm_\cusp(\vPi)=\rmm_\cusp(\tht^\psi_n(\hat\sig))\geq \rmm_\cusp(\hat\sig)=\rmm_\cusp(\sig). \]
We have thus proved the equality. 
\end{proof}

\section{Langlands classification and theta correspondence}\label{sec:3}

Let $F$ be a local field of characteristic zero in this section. 
Define the Weil index $\gam^\psi$ as in Section \ref{sec:1}. 
If $P=MN$ is a standard parabolic subgroup of $Sp_n$ with Levi subgroup $M\simeq\GL_{n_1}\times\cdots\times\GL_{n_r}\times Sp_{n_0}$, where $n_1+\cdots+n_r+n_0=n$, $\pi_i$ is a representation of $\GL_{n_i}(F)$ and $\pi_0$ is a genuine representation of $\Mp(W_{n_0})$, then we can view 
\[\pi^\psi:=(\pi_1\otimes\gam^\psi)\boxtimes\cdots\boxtimes(\pi_r\otimes\gam^\psi)\boxtimes\pi_0\]
as a genuine representation of $\til M$ and we may consider the normalized induced representation $\Ind_{\til P}^{\Mp(W_n)}\pi^\psi$. 
When $n_1=n_2=\cdots=n_r=2$ and $n_0\leq 1$, we will write $P=\scrp_n$. 

We identify the center $Z_n$ of $Sp_n(F)$ with $\mu_2$. 
Its inverse image $\til Z_n$ is the center of $\Mp(W_n)$. 
If $\vPi$ is an irreducible genuine representation of $\Mp(W_n)$, then the central character of $\vPi$ has the form $\gam^\psi\cdot z^\psi(\vPi)$, where $z^\psi(\vPi)$ is a character of $Z_n$. 
We shall regard $z^\psi(\vPi)$ as $\pm 1$, depending on whether this character is trivial or not. 

Let $V_1^+$ and $V_1^-$ be split and anisotropic quadratic spaces of dimension $3$ and discriminant $1$, respectively.  
Let $D_-$ denote the unique quaternion division algebra over $F$. 
Put $D_+=\Mat_2(F)$. 
Recall that $\SO(V_1^\pm)\simeq\mathrm{P}D_\pm^\times$. 
Given an irreducible infinite dimensional unitary representation $\pi$ of $\PGL_2(F)$, we write $\rmMp_1^\psi(\pi^\pm)$ for the unique equivalence class of an irreducible admissible genuine representation of $\Mp(W_1)$ such that $\rmMp_1^\psi(\pi^\pm)\boxtimes\hat\pi^\pm$ is a quotient of the Weil representation $\ome^\psi_{V^\pm_1}$.  
We formally set $\rmMp_1^\psi(\pi^-)=0$ if $\pi$ is not a discrete series representation. 
For $\eta\in F^\times$ we define $\left(\frac{\eta}{\pi}\right)\in\mu_2$ by  
\[\vep\left(\frac{1}{2},\pi\otimes\hat\chi^\eta\right)=\left(\frac{\eta}{\pi}\right)\hat\chi^\eta(-1)\vep\left(\frac{1}{2},\pi\right). \] 
Theorem 2.5 of \cite{PS} says that 
\beq
\rmMp_1^{\psi^\eta}(\pi^\pm\otimes\hat\chi^\eta)=\rmMp_1^\psi\bigl(\pi^{\pm\left(\frac{\eta}{\pi}\right)}\bigl). \label{tag:31}
\eeq


First we discuss the nonarchimedean case. 
Define quadratic spaces $V_n^\pm$ of dimension $2n+1$ over $F$ by $V_n^\pm=V^\pm_1\oplus\scrh_{n-1}$, where $\scrh_j$ is the split quadratic space of dimension $2j$. 
Let $k$ be a nonnegative integer. 
Set $n=2k+1$. 
Let $P^\pm_n$ denote a parabolic subgroup of $\SO(V^\pm_n)$ with Levi subgroup \[\GL_2(F)\times\cdots\times\GL_2(F)\times \SO(V^\pm_1). \]

\begin{definition}\label{def:31}
We denote by $\rmMp^\psi_n(\pi^\pm)$ the Langlands quotient of  
\[\Ind^{\Mp(W_n)}_{\til\scrp_n}(\pi\otimes\gam^\psi\alp^k)\boxtimes(\pi\otimes\gam^\psi\alp^{k-1})\boxtimes\cdots\boxtimes(\pi\otimes\gam^\psi\alp)\boxtimes \rmMp^\psi_1(\pi^\pm) \]
 and by $\rmSO_n^\pm(\pi)$ the Langlands quotient of the standard module
\[\Ind^{\SO(V^\pm_n)}_{P^\pm_n}(\pi\otimes\alp^k)\boxtimes(\pi\otimes\alp^{k-1})\boxtimes\cdots\boxtimes(\pi\otimes\alp)\boxtimes\pi^\pm. \] 
Here $\rmMp_n^\psi(\pi^-)$ and $\rmSO_n^-(\pi)$ are interpreted as $0$ if $\pi$ is not a discrete series. 
\end{definition}

Let $x_n^\pm:\SO(V_n^\pm)\to F^\times/F^{\times 2}$ stand for the spinor norm. 

\begin{lemma}\label{lem:31}
If $\pi$ is an irreducible admissible unitary infinite dimensional representation of $\PGL_2(F)$, then 
\begin{align*}
\tht_{V_n^\pm}^\psi(\rmMp^\psi_n(\pi^\pm))&\simeq\rmSO_n^\pm(\pi), & 
z^\psi(\rmMp^\psi_n(\pi^\pm))&=\pm\vep(1/2,\pi). 
\end{align*}
More generally, for $\eta\in F^\times$, 
\beq
\tht_{V_n^\pm}^{\psi^\eta}\bigl(\rmMp^\psi_n\bigl(\pi^{\pm\left(\frac{\eta}{\pi}\right)}\bigl)\bigl)\simeq\rmSO_n^\pm(\pi\otimes\hat\chi^\eta)\simeq\rmSO_n^\pm(\pi)\otimes\hat\chi^\eta\circ x_n^\pm. \label{tag:32}
\eeq
\end{lemma}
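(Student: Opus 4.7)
The plan is to reduce the lemma to its $n=1$ case via the induction principle for local theta lifting.

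When $n=1$, the isomorphism $\tht_{V_1^\pm}^\psi(\rmMp_1^\psi(\pi^\pm)) \simeq \rmSO_1^\pm(\pi)$ is essentially definitional: by construction $\rmMp_1^\psi(\pi^\pm) \boxtimes \hat\pi^\pm$ is a quotient of $\ome^\psi_{V_1^\pm}$, and the restriction of $\hat\pi^\pm$ to $\SO(V_1^\pm) \simeq \mathrm{P}D_\pm^\times$ is $\pi^\pm = \rmSO_1^\pm(\pi)$. The central character formula $z^\psi(\rmMp_1^\psi(\pi^\pm)) = \pm\vep(\thalf,\pi)$ is Waldspurger's classical computation \cite{W2}.

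For the inductive step $n = 2k+1 > 1$, I would apply Kudla's induction principle on the isometry-group side: if $\tau$ is an admissible representation of $\GL_2(F)^k$ and $\vPi_0$ is a genuine representation of $\Mp(W_1)$ whose theta lift to $\O(V_1^\pm)$ is $\hat\sigma_0$, then the theta lift
\[
\Tht^\psi_{V_n^\pm}\bigl(\Ind_{\til\scrp_n}^{\Mp(W_n)}((\tau \otimes \gam^\psi) \boxtimes \vPi_0)\bigr) \simeq \Ind_{P_n^\pm}^{\O(V_n^\pm)}(\tau \boxtimes \hat\sigma_0)
\]
remains on the tower $V^\pm$. Specializing to $\tau = (\pi \otimes \alp^k) \boxtimes \cdots \boxtimes (\pi \otimes \alp)$ and $\vPi_0 = \rmMp_1^\psi(\pi^\pm)$ identifies the full theta lift of the metaplectic standard module defining $\rmMp_n^\psi(\pi^\pm)$ with the orthogonal standard module defining $\rmSO_n^\pm(\pi)$. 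Since $\rmMp_n^\psi(\pi^\pm)$ is the Langlands quotient, functoriality of $\Tht^\psi_{V_n^\pm}$ makes $\tht^\psi_{V_n^\pm}(\rmMp_n^\psi(\pi^\pm))$ a quotient of the orthogonal side; the Howe conjecture (\cite{H,GT2}) guarantees irreducibility and Proposition \ref{prop:21} guarantees nonvanishing, so it must coincide with the unique irreducible quotient $\rmSO_n^\pm(\pi)$. The central character formula then propagates from the base case, since each $\GL_2$-factor in the parabolic induction (twisted by $\gam^\psi$) contributes trivially to $z^\psi$.

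For the twisted statement (\ref{tag:32}), I would apply the first part with $(\psi, \pi)$ replaced by $(\psi^\eta, \pi \otimes \hat\chi^\eta)$, obtaining
\[
\tht_{V_n^\pm}^{\psi^\eta}(\rmMp_n^{\psi^\eta}((\pi \otimes \hat\chi^\eta)^\pm)) \simeq \rmSO_n^\pm(\pi \otimes \hat\chi^\eta).
\]
Using $\gam^{\psi^\eta} = \hat\chi^\eta \gam^\psi$ on each $\GL_2$-factor together with (\ref{tag:31}) on the $\Mp(W_1)$-factor, the metaplectic standard module on the left is isomorphic to the one defining $\rmMp_n^\psi\bigl(\pi^{\pm\left(\frac{\eta}{\pi}\right)}\bigr)$, which yields the first isomorphism of (\ref{tag:32}). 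The second follows by induction-in-stages: the character $\hat\chi^\eta \circ x_n^\pm$ restricts on the Levi $\GL_2 \times \cdots \times \GL_2 \times \SO(V_1^\pm)$ to $(\hat\chi^\eta \circ \det) \boxtimes \cdots \boxtimes (\hat\chi^\eta \circ \det) \boxtimes (\hat\chi^\eta \circ x_1^\pm)$, so twisting the orthogonal standard module by $\hat\chi^\eta \circ x_n^\pm$ replaces every $\pi$ by $\pi \otimes \hat\chi^\eta$.

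The main obstacle is setting up the induction principle precisely in the metaplectic-orthogonal setting, tracking the $\gam^\psi$-twists and verifying that the lift stays on the correct tower $V^\pm$ (rather than jumping to the opposite tower) through each induction step. Once this is in place, the Langlands-quotient-to-Langlands-quotient transfer and the twisted version of the identity are routine.
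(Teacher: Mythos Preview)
Your proposal is correct and follows essentially the same route as the paper. The paper's proof is terser: for the first equivalence it simply invokes \cite[Theorem 8.1]{GS} (which packages the Kudla induction principle and Langlands-quotient compatibility you sketch) together with Proposition~\ref{prop:21}; for the central character it cites \cite[Theorems 5.2, 7.1]{GS} to reduce to $n=1$; and for the final isomorphism it cites \cite[Lemma 4.9]{Sh4} for the compatibility of the spinor-norm twist with parabolic induction---exactly the Levi restriction you describe.
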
  

\begin{proof}
The Howe correspondence is compatible with the Langlands classification by Proposition \ref{prop:21} and \cite[Theorem 8.1]{GS}, which proves the first equivalence. 
Theorems 5.2 and 7.1 of \cite{GS} imply that 
\[z^\psi(\rmMp^\psi_n(\pi^\pm))=z^\psi(\rmMp^\psi_1(\pi^\pm))=\pm\vep(1/2,\pi). \]
Since $\gam^{\psi^\eta}/\gam^\psi=\hat\chi^\eta$, we can deduce the second equivalence from the expression (\ref{tag:31}). 
Twisting by the character $\hat\chi^\eta\circ x_n^\pm$ commutes with parabolic induction in view of Lemma 4.9 of \cite{Sh4}, which proves the last equivalence.  
\end{proof}
  
Let $\psi=\bfe$. 
The symbol $\frkD_\ell^{(n)}$ (resp. $\bar\frkD_\ell^{(n)}$) denotes the lowest (resp. highest) weight representation of $\Mp(W_n)$ with lowest (resp. highest) $K$-type $(\det)^\ell$ (resp. $(\det)^{-\ell}$) for $\ell\in\frac{1}{2}\NN$. 
For an odd natural number $n$ and $\kap\in\NN$ we define genuine representations $\rmMp^\psi_n(D_{2\kap}^\pm)$ of $\Mp(W_n)$ by 
\begin{align*}
\rmMp_n^\psi\Big(D_{2\kap}^{(-1)^{(n-1)/2}}\Big)&=\frkD^{(n)}_{(2\kap+n)/2}, & 
\rmMp_n^\psi\Big(D_{2\kap}^{(-1)^{(n+1)/2}}\Big)&=\bar\frkD^{(n)}_{(2\kap+n)/2}. 
\end{align*}

Irreducible representations of $\O(n)$ are parameterized by elements $\lam\in\ZZ^n$ of the form $\lam=(a_1,\dots,a_k,0,\dots,0)$ or $\lam=(a_1,\dots,a_k,1,\dots,1,0,\dots,0)$, where $k\leq\bigl[\frac{n}{2}\bigl]$ and $a_1\geq a_2\geq\dots\geq a_k\geq 1$. 
In the latter case $1$ occurs $n-2k$ times. 
The symbol $\tau(\lam)$ denote the finite-dimensional representation of $\O(n)$ corresponding to $\lam$. 
Note that 
\beq
\tau(a_1,\dots,a_k,1,\dots,1,0,\dots,0)\simeq\tau(a_1,\dots,a_k,0,\dots,0)\otimes\sgn. \label{tag:33}
\eeq
We define the equivalence classes of real quadratic spaces $V_n^\pm$ of dimension $2n+1$ so that $V^{(-1)^{(n-1)/2}}_n$ has signature $(2n,1)$ and $V^{(-1)^{(n+1)/2}}_n$ is negative definite. 
For $\kap>\frac{n}{2}$ we define $\rmSO_n^\pm(D_{2\kap})$ as the discrete series representation of $\SO(V_n^\pm)$ with Harish-Chandra parameter 
\[\left(\kap+\frac{n}{2}-1,\kap+\frac{n}{2}-2,\dots,\kap-\frac{n}{2}\right). \] 

\begin{lemma}[\cite{AB}]\label{lem:32}
Suppose that $n$ is odd, $\psi=\bfe$ and $\kap>\frac{n}{2}$. 
\begin{enumerate}
\renewcommand\labelenumi{(\theenumi)}
\item\label{lem:321} If $\eta>0$, then $\tht_{V_n^\pm}^{\psi^\eta}(\rmMp^\psi_n(D_{2\kap}^\pm))\simeq\rmSO_n^\pm(D_{2\kap})$. 
\item\label{lem:322} If $\eta<0$, then $\tht_{V_n^\mp}^{\psi^\eta}(\rmMp^\psi_n(D_{2\kap}^\pm))\simeq\rmSO_n^\mp(D_{2\kap})$. 
\item\label{lem:323} $\rmSO_n^\vep(D_{2\kap})\otimes\sgn\circ x^\vep_n\simeq\rmSO_n^\vep(D_{2\kap})$. 
\end{enumerate}
\end{lemma}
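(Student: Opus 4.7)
The plan is to mimic the proof of Lemma 3.1 in the archimedean setting, replacing the $p$-adic input of \cite{GS} with the archimedean classification of Adams--Barbasch \cite{AB}. The essential ingredients are (a) compatibility of the Adams--Barbasch bijection with the Langlands classification and with parabolic induction, (b) preservation of infinitesimal character under theta lift, and (c) explicit identification of the minimal $K$-type of the lift via Howe duality for the compact subpair $\tilde U(n)\times(\O(p)\times\O(q))$ inside the seesaw.

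For part (\ref{lem:321}), since $\eta>0$ is a square in $\RR^\times$, we have $\ome^{\psi^\eta}_{V_n^\pm}\simeq\ome^\psi_{V_n^\pm}$ and the problem reduces to the case $\psi=\bfe$. The minimal $\tilde U(n)$-type of $\frkD^{(n)}_{(2\kap+n)/2}$ is $(\det)^{(2\kap+n)/2}$, and Howe duality for the compact subpair identifies the minimal $K$-type of the theta lift to $\SO(V_n^\pm)$. Matching this with the infinitesimal character of $\frkD^{(n)}_{(2\kap+n)/2}$ pins down the lift as the discrete series (or, in the compact case, the irreducible representation of $\O(2n+1)$) with Harish--Chandra parameter $\bigl(\kap+\frac{n}{2}-1,\dots,\kap-\frac{n}{2}\bigl)$, which is $\rmSO_n^\pm(D_{2\kap})$ by definition. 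For part (\ref{lem:322}), the archimedean analog of (\ref{tag:31}), combined with $\bigl(\frac{\eta}{D_{2\kap}}\bigl)=-1$ for $\eta<0$ (computed via Lemma \ref{lem:11}(\ref{lem:113}) and $D_{2\kap}\otimes\sgn\simeq D_{2\kap}$) together with the compatibility of parabolic induction with Weil-index twists, gives $\rmMp_n^{\psi^\eta}(D_{2\kap}^\pm)\simeq\rmMp_n^\psi(D_{2\kap}^\mp)$; applying part (\ref{lem:321}) with $\psi$ replaced by $\psi^\eta$ then yields part (\ref{lem:322}).

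For part (\ref{lem:323}), note that when $V_n^\vep$ is negative definite, $\SO(V_n^\vep,\RR)$ is connected and compact, so it admits no nontrivial continuous character to $\{\pm 1\}$ and the claim is automatic. When $V_n^\vep$ has signature $(2n,1)$, the character $\sgn\circ x_n^\vep$ is the nontrivial character of $\pi_0(\SO(V_n^\vep,\RR))\simeq\ZZ/2$, and the invariance of $\rmSO_n^\vep(D_{2\kap})$ under this twist follows from standard facts about the restriction of a discrete series with regular Harish--Chandra parameter to the identity component of a disconnected real reductive group. The principal obstacle in executing this plan is the careful bookkeeping of sign conventions: one must correctly match the chirality $(-1)^{(n\pm 1)/2}$ appearing in the definition of $\rmMp_n^\bfe(D_{2\kap}^\pm)$ with the $\pm$-signs distinguishing the equivalence classes $V_n^\pm$ under the Adams--Barbasch bijection, and then verify that the swap induced by $\eta\mapsto -\eta$ interchanges the two packets as claimed.
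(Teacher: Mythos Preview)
Your approach to parts (\ref{lem:321}) and (\ref{lem:322}) differs from the paper's. The paper simply invokes Theorem~3.3 of \cite{AB}, which computes $\tht^{\psi^t}_{V_n^\vep}(\frkD^{(n)}_\ell)$ and $\tht^{\psi^t}_{V_n^{-\vep}}(\bar\frkD^{(n)}_\ell)$ directly as discrete series with Harish--Chandra parameter $(\ell-1,\dots,\ell-n)$, where $\vep=\sgn(t)(-1)^{(n-1)/2}$; setting $\ell=\kap+\frac{n}{2}$ handles both signs of $\eta$ at once. Your route for (\ref{lem:322}) via the archimedean analogue of (\ref{tag:31}) is more delicate than it looks: in the real case $\rmMp_n^\psi(D_{2\kap}^\pm)$ is \emph{defined} ad~hoc as $\frkD^{(n)}_{(2\kap+n)/2}$ or $\bar\frkD^{(n)}_{(2\kap+n)/2}$, not as a Langlands quotient of an induced representation, so the ``compatibility of parabolic induction with Weil-index twists'' does not immediately apply, and the identity $\rmMp_n^{\psi^\eta}(D_{2\kap}^\pm)\simeq\rmMp_n^\psi(D_{2\kap}^\mp)$ for $\eta<0$ has no independent meaning without first saying what the left side is. This can be made to work, but it is circuitous compared with a direct citation.

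Your argument for (\ref{lem:323}) has a genuine gap. The claim that invariance under the $\sgn\circ x_n^\vep$-twist ``follows from standard facts about the restriction of a discrete series with regular Harish--Chandra parameter to the identity component'' is not correct as a general principle: for a disconnected group $G$ with $\pi_0(G)\simeq\ZZ/2$, a discrete series $\Sigma$ satisfies $\Sigma\otimes\sgn\simeq\Sigma$ if and only if $\Sigma|_{G_0}$ is reducible, and this depends on the particular representation. The paper's proof uses a specific feature of this case: the minimal $K$-type of $\rmSO_n^\vep(D_{2\kap})$, viewed as an $\O(2n)$-type, is $\tau\bigl(\kap-\tfrac{n-1}{2},\dots,\kap-\tfrac{n-1}{2},0,\dots,0\bigl)$ with exactly $n$ nonzero entries out of $2n$. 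By (\ref{tag:33}) applied to $\O(2n)$ with $k=n$, such a $K$-type is fixed by tensoring with $\sgn$ (the number of inserted $1$'s is $2n-2n=0$). Since $\sgn\circ x_n^\vep$ restricts to $\sgn_{\O(2n)}$ on the maximal compact, the two representations share both minimal $K$-type and infinitesimal character, hence coincide. You need this explicit $K$-type computation; the appeal to ``standard facts'' does not suffice.
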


\begin{remark}\label{rem:31}
Since $D_{2\kap}\otimes\sgn\simeq D_{2\kap}$ and $\bigl(\frac{\eta}{D_{2\kap}}\bigl)=\sgn(\eta)$, the equivalence (\ref{tag:32}) remains valid when $F=\RR$ and $\pi\simeq D_{2\kap}$. 
\end{remark}

\begin{proof}
If $\ell$ is a half integer greater than $n$, then $\frkD^{(n)}_\ell$ and $\bar\frkD^{(n)}_\ell$ are discrete series representations and their Harish-Chandra parameters are 
\begin{align*}
&(\ell-1,\ell-2,\dots,\ell-n) & &\text{and} & 
&(n-\ell,n-1-\ell,\dots,1-\ell), 
\end{align*}
respectively. 
Theorem 3.3 of \cite{AB} shows that $\tht^{\psi^t}_{V_n^\vep}(\frkD^{(n)}_\ell)$ and $\tht^{\psi^t}_{V_n^{-\vep}}(\bar\frkD^{(n)}_\ell)$ are discrete series representations whose Harish-Chandra parameters are $(\ell-1,\ell-2,\dots,\ell-n)$, where $\vep=\sgn(t)(-1)^{(n-1)/2}$. 
Letting $\ell=\kap+\frac{n}{2}$, we obtain (\ref{lem:321}), (\ref{lem:322}). 
If $\SO(V^\vep_n)$ is topologically connected, then $x^\vep_n$ is trivial. 
We may therefore assume that $\vep=(-1)^{(n-1)/2}$ in the proof of (\ref{lem:323}). 
The minimal $K$-type of $\rmSO_n^\vep(D_{2\kap})$ is
\[\left(\kap-\frac{n-1}{2},\kap-\frac{n-1}{2},\dots,\kap-\frac{n-1}{2},0,\dots,0\right)\in\ZZ^{2n},\]
where $0$ occurs $n$ times. 
Thus $\rmSO_n^\vep(D_{2\kap})$ and $\rmSO_n^\vep(D_{2\kap})\otimes\sgn\circ x_n^\vep$ have the same minimal $K$-type by (\ref{tag:33}). 
Since they clearly have the same infinitesimal character, they are equivalent (see the note added in Section 3 of \cite{AB}). 
\end{proof}

\section{Degenerate principal series and Langlands classification}\label{sec:4}

We let $F$ be a nonarchimedean local field in this section. 
For $k\leq n$ we denote by $P_k^n$ the parabolic subgroup of $Sp_n$ stabilizing a totally isotropic subspace of $W_n$ of dimension $k$. 
Composing a character $\mu\in\Ome(F^\times)$ by $\det$ gives a character of $M_n^n(F)\simeq\GL_n(F)$. 
One can define the degenerate principal series representation $I_n^\psi(\mu)=\Ind^{\Mp(W_n)}_{\til P_n^n}\mu\gam^\psi$. 

\begin{lemma}[\cite{Sw}]\label{lem:41}
Assume that $n$ is odd. 
Let $\mu\in\Ome(F^\times)$ and $\eta\in F^\times$. 
\begin{enumerate}
\renewcommand\labelenumi{(\theenumi)}
\item\label{lem:411} If $-\frac{1}{2}<\Re\mu<\frac{1}{2}$, then $I_n^\psi(\mu)$ is irreducible. 
\item\label{lem:412} $I_n^\psi(\hat\chi^\eta\alp^{1/2})$ has a unique irreducible subrepresentation, which is denoted by $\St^{\psi^\eta}_n$. 
\end{enumerate}
\end{lemma}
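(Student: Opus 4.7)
The plan is to analyze $I_n^\psi(\mu)$ via the standard long-Weyl intertwining operator $M^\psi(\mu):I_n^\psi(\mu)\to I_n^\psi(\mu^{-1})$ attached to the Siegel parabolic $P_n^n$, following Sweet's thesis. This operator is defined for $\Re\mu$ large by an absolutely convergent integral over $N_n^n(F)$ and has meromorphic continuation in $\mu$. Both reducibility loci and submodule structure will be read off from the normalizer of $M^\psi(\mu)$.

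First I would compute the Gindikin--Karpelevich-type normalizer of $M^\psi(\mu)$ on a spherical vector. For the metaplectic Siegel induction with $n$ odd, the normalizer takes the shape of a product of local zeta factors of $\mu^2$ at the shifts dictated by the roots of $\GL_n$ inside $Sp_n$, together with a Weil-index twist that accounts for the central extension $\Mp(W_n)\to Sp_n$. In the open strip $-\frac{1}{2}<\Re\mu<\frac{1}{2}$ none of the relevant shifts reach a pole or zero of $L(s,\mu^2)$, so the normalized operator $N^\psi(\mu)$ is holomorphic and nonzero; the functional equation $N^\psi(\mu)\circ N^\psi(\mu^{-1})=\mathrm{id}$ together with the self-duality $I_n^\psi(\mu)^\vee\simeq I_n^\psi(\mu^{-1})$ then forces $N^\psi(\mu)$ to be an isomorphism and yields the irreducibility of $I_n^\psi(\mu)$, proving (1).

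For part (2), at $\mu=\hat\chi^\eta\alp^{1/2}$ the parameter satisfies $\mu^2=\alp$ because $\hat\chi^\eta$ has order two; this lands on a pole of the $L$-factor of the trivial character appearing in the normalizer, so $M^\psi(\mu)$ itself has a zero and its kernel is a nonzero proper submodule. A Jacquet-module computation for $r_{\til P_n^n}(I_n^\psi(\hat\chi^\eta\alp^{1/2}))$ using the Bruhat filtration and Mackey theory, combined with Frobenius reciprocity, then establishes that this representation has length exactly two. The unique irreducible quotient is the Langlands quotient, while the unique irreducible subrepresentation is precisely the kernel of $M^\psi(\mu)$; one defines $\St_n^{\psi^\eta}$ to be this submodule.

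The main obstacle will be keeping track of the Weil-index factors introduced by the metaplectic cover, which modify both the intertwining integral and its normalizer relative to the linear case, and in particular explain why the reducibility point is $\mu=\hat\chi^\eta\alp^{1/2}$ rather than only $\mu=\alp^{1/2}$: the twist $\mu\mapsto\mu\cdot\hat\chi^\eta$ is absorbed by a change of additive character $\psi\leadsto\psi^\eta$, so the family of reducibility points at $|\Re\mu|=\frac{1}{2}$ is parametrized by $F^\times/F^{\times 2}$. Verifying that the kernel rather than the quotient is the distinguished irreducible piece, consistent with the linear-group Steinberg conventions, will require a careful comparison of the two composition factors via their Jacquet modules.
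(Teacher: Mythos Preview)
The paper gives no proof of this lemma; it is quoted from Sweet's preprint with only a citation. So there is nothing in the paper to compare your argument against, and your sketch should be measured against Sweet's actual method. In outline you have identified that method correctly: study the long intertwining operator $M^\psi(\mu)$ for the Siegel parabolic, compute its normalizing factor as a product of abelian $L$-values (with the Weil-index corrections forced by the metaplectic cover), locate reducibility at the zeros and poles of this factor, and at the boundary point $\mu=\hat\chi^\eta\alp^{1/2}$ use a Jacquet-module calculation to pin down length two and hence a unique irreducible subrepresentation.

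Two points need tightening. First, in part~(1) you infer irreducibility from the fact that the normalized operator $N^\psi(\mu)$ is an isomorphism via $N^\psi(\mu)\circ N^\psi(\mu^{-1})=\mathrm{id}$. That step is not automatic: an isomorphism $I_n^\psi(\mu)\simeq I_n^\psi(\mu^{-1})$ does not by itself preclude both sides being reducible in the same way. You need an extra input, for instance that $\dim\Hom_{\Mp(W_n)}\bigl(I_n^\psi(\mu),I_n^\psi(\mu^{-1})\bigr)=1$ throughout the strip, which comes from a Bruhat--Mackey analysis of the double cosets $P_n^n\backslash Sp_n/P_n^n$, or alternatively irreducibility on the unitary axis $\Re\mu=0$ together with discreteness of the reducibility locus. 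Second, in part~(2) the sentence ``$M^\psi(\mu)$ itself has a zero and its kernel is a nonzero proper submodule'' is self-contradictory: if the operator vanished identically its kernel would be the whole module. What actually occurs is that the \emph{normalized} operator $N^\psi(\hat\chi^\eta\alp^{1/2})$ is holomorphic and nonzero but fails to be injective, and its kernel is the irreducible submodule you call $\St_n^{\psi^\eta}$.
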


When $n=m=1$ and $V$ has discriminant $1$, the representation $\ome^{\psi^\eta}_V$ is realized in $\cals(F)$ and the direct sum $\ome^{\psi^\eta}_+\oplus\ome^{\psi^\eta}_-$ of two irreducible representations: $\ome^{\psi^\eta}_+$ (resp. $\ome^{\psi^\eta}_-$) consists of even (resp. odd) functions in $\cals(F)$. 

\begin{proposition}[{\cite[Propositions 4, 5, 8, 9]{W2}}]\label{prop:41} 
\begin{enumerate}
\renewcommand\labelenumi{(\theenumi)}
\item\label{prop:411} If $\mu\in\Ome(F)$ and $-\frac{1}{2}<\Re\mu<\frac{1}{2}$, then $\rmMp_1^\psi(I(\mu,\mu^{-1})^+)\simeq I^\psi_1(\mu)$. 
\item\label{prop:412} If $\pi\simeq\St$, then $\rmMp_1^\psi(\pi^+)\simeq\ome^\psi_-$ and $\rmMp_1^\psi(\pi^-)\simeq\St^\psi_1$. 
\item\label{prop:413} If $\pi\simeq\St\otimes\hat\chi^\eta$ with $\eta\notin F^{\times 2}$, then $\rmMp_1^\psi(\pi^+)\simeq\St^{\psi^\eta}_1$ and $\rmMp_1^\psi(\pi^-)\simeq \ome^{\psi^\eta}_-$. 
\item\label{prop:414} If $\psi=\bfe$, then $\rmMp_1^\psi(D_{2k}^+)\simeq\frkD^{(1)}_{(2k+1)/2}$ and $\rmMp^\psi_1(D_{2k}^-)\simeq\bar\frkD^{(1)}_{(2k+1)/2}$. 
\end{enumerate}
\end{proposition}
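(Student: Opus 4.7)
The four parts compute the theta correspondences on the metaplectic side for the dual pair $(\O(V_1^\pm),\Mp(W_1))$ applied to the four types of irreducible infinite-dimensional unitary representations of $\PGL_2(F)$: nearly tempered principal series, Steinberg, twisted Steinberg, and archimedean discrete series. My plan is to treat each in turn, using Jacquet-module techniques in the nonarchimedean setting and minimal-$K$-type arguments in the archimedean setting.

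For \textbf{(1)}, with $\pi=I(\mu,\mu^{-1})$, I would exploit compatibility of the Howe correspondence with parabolic induction. The representation $\pi$ is induced from the character $\mu\boxtimes\mu^{-1}$ of the Borel of $\PGL_2(F)\simeq\SO(V_1^+,F)$, and this Borel stabilizes an isotropic line of $V_1^+$. On the metaplectic side, the Siegel parabolic $\til P_1^1\subset\Mp(W_1)$ stabilizes an isotropic line of $W_1$, and Kudla's filtration of the Jacquet module of $\ome^\psi_{V_1^+}$ along these paired unipotent radicals exhibits $\rmMp_1^\psi(\pi^+)$ as a nonzero subquotient of $I_1^\psi(\mu)$. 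Lemma \ref{lem:41}\eqref{lem:411} then upgrades this to the claimed equality.

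For \textbf{(2)}, I would begin with the short exact sequence $0\to\St\to I(\alp^{1/2},\alp^{-1/2})\to\1\to 0$ of $\PGL_2(F)$-modules, paired on the metaplectic side with the filtration of $I_1^\psi(\alp^{1/2})$ from Lemma \ref{lem:41}\eqref{lem:412}, whose unique irreducible quotient is the even Weil piece $\ome_+^\psi$ (a classical reducibility-point computation). A central-character computation using $z^\psi(\rmMp_1^\psi(\St^\pm))=\pm\vep(1/2,\St)=\mp 1$ distinguishes which composition factor of the relevant quotient of $\ome^\psi_{V_1^+}$ matches $\St$ versus $\1$, yielding $\rmMp_1^\psi(\St)\simeq\ome_-^\psi$. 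On the anisotropic side $\SO(V_1^-,F)\simeq\mathrm{P}D^\times$, the Jacquet-Langlands partner $\1$ of $\St$ lifts to the Steinberg $\St_1^\psi$ by a direct Jacquet-functor calculation of $\ome^\psi_{V_1^-}$. Part \textbf{(3)} follows from (2) by invoking the twist identity \eqref{tag:31}: for $\eta\notin F^{\times 2}$, Lemma \ref{lem:11}\eqref{lem:112} gives $\left(\tfrac{\eta}{\St}\right)=-1$, and substituting produces the stated formulas with $\psi$ and $\psi^\eta$ playing interchanged roles.

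For \textbf{(4)}, at the real place with $\psi=\bfe$, I would work in the Fock model of $\ome^\bfe_{V_1^+}$ and apply the minimal-$K$-type matching principle for the dual pair $(\O(2,1),\Mp_2(\RR))$. The lowest weight $2k$ of $D_{2k}$ pairs with the character $(\det)^{(2k+1)/2}$ of the maximal compact of $\Mp(W_1)_\RR$, identifying $\rmMp_1^\bfe(D_{2k}^+)\simeq\frkD_{(2k+1)/2}^{(1)}$; the antiholomorphic case is entirely symmetric and yields $\bar\frkD_{(2k+1)/2}^{(1)}$. The hardest step of the whole proof is the sign bookkeeping in (2): selecting $\ome_-^\psi$ rather than $\ome_+^\psi$ as the partner of $\St$ requires carefully tracking the central character of the metaplectic constituent together with the action of the component group $\O(V_1^+)/\SO(V_1^+)\simeq\mu_2$ on $\ome^\psi_{V_1^+}$.
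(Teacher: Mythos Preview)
The paper does not prove this proposition at all: it is stated with an attribution to Waldspurger \cite[Propositions 4, 5, 8, 9]{W2} and no argument is supplied. So there is nothing to compare your sketch against in this paper; you have written an outline of a proof where the author simply imports a known result.

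That said, your outline is broadly in the right spirit and would, with care, recover Waldspurger's statements. A couple of points deserve attention. In part~(2) your logic is slightly tangled: you invoke the filtration of $I_1^\psi(\alp^{1/2})$ and name $\ome_+^\psi$ as its irreducible quotient, but the representation you actually need to produce is $\ome_-^\psi$, which does not occur in $I_1^\psi(\alp^{1/2})$ at all. The cleaner route is to note that the theta lift of the trivial representation of $\SO(V_1^+)$ is $\ome_+^\psi$ (an elementary computation with the Weil representation of a one-dimensional quadratic space), and then use the exact sequence for $\St$ together with the Howe duality to force $\rmMp_1^\psi(\St^+)\simeq\ome_-^\psi$. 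Also, be careful about circularity when you appeal to the central-character identity $z^\psi(\rmMp_1^\psi(\pi^\pm))=\pm\vep(1/2,\pi)$: in this paper that formula appears in Lemma~\ref{lem:31}, which is logically downstream of the present proposition; if you want an independent proof you should extract the central character directly from the Weil representation rather than quoting Lemma~\ref{lem:31}. Your reduction of (3) to (2) via \eqref{tag:31} is fine, since \eqref{tag:31} is attributed to \cite{PS} and does not rely on the present statement. Part~(4) via $K$-type matching is standard and correct.
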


\begin{lemma}\label{lem:44}
Let $n$ be odd and $\mu\in\Ome(F^\times)$. 
\begin{enumerate}
\renewcommand\labelenumi{(\theenumi)}
\item\label{lem:441} If $-\frac{1}{2}<\Re\mu<\frac{1}{2}$, then $I^\psi_n(\mu)\simeq \rmMp_n^\psi(I(\mu,\mu^{-1})^+)$. 
\item\label{lem:442} If $\eta\in F^\times\setminus F^{\times2}$ and $\pi\simeq\St\otimes\hat\chi^\eta$, then $\St^{\psi^\eta}_n\simeq \rmMp_n^\psi(\pi^+)$. 
\item\label{lem:443} $\St^\psi_n\simeq \rmMp_n^\psi(\St^-)$. 
\end{enumerate}
\end{lemma}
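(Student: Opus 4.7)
Plan: all three parts follow a common template---identify the rank-one factor at the bottom of the $\til\scrp_n$-induction via Proposition \ref{prop:41}, propagate via transitivity of parabolic induction down to the Borel $\til B_n$ of $\Mp(W_n)$, and match the resulting torus character with the analogous Borel presentation of the Siegel-induced $I_n^\psi(\cdot)$ using the Weyl group $W(Sp_n)=(\ZZ/2)^n\rtimes S_n$.

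For (\ref{lem:441}), the hypothesis $-\tfrac12<\Re\mu<\tfrac12$ makes $I(\mu,\mu^{-1})\otimes\alp^j$ on each $\GL_2$ block and $I_1^\psi(\mu)$ on the $\SL_2$ block (by Proposition \ref{prop:41}(\ref{prop:411})) irreducible and equal to their full Borel inductions. The standard module defining $\rmMp_n^\psi(I(\mu,\mu^{-1})^+)$ is therefore the Borel principal series
\[ \Ind_{\til B_n}^{\Mp(W_n)}\bigl((\mu\alp^k,\mu^{-1}\alp^k,\mu\alp^{k-1},\mu^{-1}\alp^{k-1},\ldots,\mu\alp,\mu^{-1}\alp,\mu)\gam^\psi\bigl). \]
Writing $\mu\circ\det$ on $\GL_n$ as the Langlands quotient of $\Ind_{B_{\GL_n}}^{\GL_n}(\mu\alp^{(n-1)/2},\ldots,\mu\alp^{(1-n)/2})$ and inducing in stages through $\til P_n^n$, we realise $I_n^\psi(\mu)$---irreducible by Lemma \ref{lem:41}(\ref{lem:411})---as the Langlands quotient of
\[ \Ind_{\til B_n}^{\Mp(W_n)}\bigl((\mu\alp^k,\mu\alp^{k-1},\ldots,\mu,\mu\alp^{-1},\ldots,\mu\alp^{-k})\gam^\psi\bigl). \]
The two torus characters are related by the sign changes $\mu^{-1}\alp^j\leftrightarrow\mu\alp^{-j}$ at alternating positions followed by a permutation, so they lie in the same $W(Sp_n)$-orbit, and the two Borel principal series correspond to the same Langlands datum. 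Its unique associated irreducible representation is then $\rmMp_n^\psi(I(\mu,\mu^{-1})^+)=I_n^\psi(\mu)$.

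Parts (\ref{lem:442}) and (\ref{lem:443}) follow the same template, with Proposition \ref{prop:41}(\ref{prop:413}) and (\ref{prop:412}) at the base. Each $(\St\otimes\hat\chi^\eta)\otimes\alp^j$ (resp.\ $\St\otimes\alp^j$) is the unique irreducible subrepresentation of $I_{\GL_2}(\hat\chi^\eta\alp^{j+1/2},\hat\chi^\eta\alp^{j-1/2})$ (resp.\ $I_{\GL_2}(\alp^{j+1/2},\alp^{j-1/2})$), and $\St_1^{\psi^\eta}$ (resp.\ $\St_1^\psi$) is by Lemma \ref{lem:41}(\ref{lem:412}) the unique submodule of $I_1^\psi(\hat\chi^\eta\alp^{1/2})$ (resp.\ $I_1^\psi(\alp^{1/2})$). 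Induction in stages therefore embeds $S_\pi$ as a submodule of the Borel principal series with torus character
\[ (\hat\chi^\eta\alp^{k+1/2},\hat\chi^\eta\alp^{k-1/2},\hat\chi^\eta\alp^{k-1/2},\ldots,\hat\chi^\eta\alp^{3/2},\hat\chi^\eta\alp^{1/2},\hat\chi^\eta\alp^{1/2})\gam^\psi \]
(with $\hat\chi^\eta$ replaced by the trivial character in (\ref{lem:443})); using $(\hat\chi^\eta)^{-1}=\hat\chi^\eta$ together with sign changes and a permutation, this character is $W(Sp_n)$-conjugate to the one presenting $I_n^\psi(\hat\chi^\eta\alp^{1/2})$ (resp.\ $I_n^\psi(\alp^{1/2})$) as the Langlands quotient from $\til P_n^n$.

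The main obstacle is to match the Langlands \emph{quotient} of $S_\pi$ with the Langlands \emph{subrepresentation} $\St_n^{\psi^\eta}$ (resp.\ $\St_n^\psi$) of the Siegel principal series. I would accomplish this by producing a nonzero $\Mp(W_n)$-equivariant map $S_\pi\to I_n^\psi(\hat\chi^\eta\alp^{1/2})$ via Frobenius reciprocity: the geometric lemma computes the Jacquet module $J_{\til P_n^n}(S_\pi)$ and the contribution of the longest Weyl double coset surjects onto the genuine character $\hat\chi^\eta\alp^{1/2}\circ\det\cdot\gam^\psi$ on the Siegel Levi, yielding the desired map. Its image is a nonzero submodule of $I_n^\psi(\hat\chi^\eta\alp^{1/2})$ and therefore contains the unique socle $\St_n^{\psi^\eta}$; being simultaneously an irreducible quotient of $S_\pi$, the image equals $\St_n^{\psi^\eta}$, which then coincides with the Langlands quotient $\rmMp_n^\psi(\pi^+)$. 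The explicit geometric-lemma calculation at the long double coset is the technical crux, made tractable by the rank-one inputs of Proposition \ref{prop:41} and the structural analysis in \cite{Sw}.
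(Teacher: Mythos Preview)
The paper does not actually prove Lemma~\ref{lem:44}; it simply cites the companion manuscript \cite{Y4}, so there is no in-paper argument to compare against. I will therefore comment on the internal soundness of your sketch.

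Your template for part~(\ref{lem:441}) is essentially right, but the final sentence is too quick. That $\chi_1$ and $\chi_2$ lie in the same $W(Sp_n)$-orbit only tells you the two Borel principal series have the same composition factors; it does \emph{not} by itself identify $I_n^\psi(\mu)$ with the Langlands quotient of $S$. What you need is that $I_n^\psi(\mu)$ is an irreducible \emph{quotient} (not merely a subquotient) of $S$; since $S$ is a standard module this forces equality with $\rmMp_n^\psi(I(\mu,\mu^{-1})^+)$. One clean way to see this is to note that the dominant representative of $\chi_2$ is precisely $\chi_1$, and that the long intertwining operator $S=\Ind_{\til B_n}\chi_1\to\Ind_{\til B_n}\chi_2$ factors through the Siegel induction $I_n^\psi(\mu)$. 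This is routine but should be said.

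Parts~(\ref{lem:442}) and~(\ref{lem:443}) contain a genuine gap. You produce a nonzero map $S_\pi\to I_n^\psi(\hat\chi^\eta\alp^{1/2})$ and then assert that the image, ``being simultaneously an irreducible quotient of $S_\pi$,'' equals $\St_n^{\psi^\eta}$. But nothing you have written shows the image is irreducible. The image certainly contains the socle $\St_n^{\psi^\eta}$, and is certainly a quotient of $S_\pi$; however $I_n^\psi(\hat\chi^\eta\alp^{1/2})$ is reducible with several composition factors (cf.\ \cite{Sw}), and a priori the image could be a proper reducible submodule. To close the argument you must either (a) show $\Hom(S_\pi,I_n^\psi(\hat\chi^\eta\alp^{1/2}))$ is one-dimensional and that the kernel of any such map contains the maximal proper submodule of $S_\pi$, or (b) compute the Langlands data of $\St_n^{\psi^\eta}$ directly and match it with that of $S_\pi$. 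Option~(b) is exactly the content of \cite{Y4} and is not a consequence of the rank-one facts in Proposition~\ref{prop:41} together with an abstract geometric-lemma bookkeeping; it requires the detailed decomposition of the degenerate Siegel series carried out in \cite{Sw}. Your sketch gestures at this (``the structural analysis in \cite{Sw}'') but does not indicate how that analysis is invoked, and the sentence as written is a non sequitur.
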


\begin{proof}
These results are proved in \cite{Y4}. 
\end{proof}

\section{CAP representations for odd orthogonal groups}\label{sec:5}

We now work over a totally real field $F$. 
Put $d=[F:\QQ]$. 
Let $\pi\simeq\otimes_v'\pi^{}_v$ be an irreducible cuspidal automorphic representation of $\PGL_2(\AA)$ such that $\pi_v\simeq D_{2\kap_v}$ for $v\in\frkS_\infty$. 
Recall that $\ell_\pi=\shp\frkS_\pi$ and $\ell^-_\pi=\shp\frkS^-_\pi$, where $\frkS_\pi$ and $\frkS_\pi^-$ are defined in the notation section. 
Throughout this section $n$ is odd. 
We associate to each function $\eps:\frkS_\infty\cup\frkS_\pi\to\mu_2$ the irreducible admissible genuine representation $\rmMp^\psi_n(\pi^\eps)=\otimes'_v \rmMp^{\psi_v}_n(\pi_v^{\eps_v})$ of $\Mp(W_n)_\AA$ and let $\rmMp_n^\psi(\pi)$ be the set of $2^{d+\ell_\pi}$ such representations of $\Mp(W_n)_\AA$. 

Recall the identity $z^{\psi_\frkp}(\rmMp^{\psi_\frkp}_n(\pi_\frkp^{\eps_\frkp}))=\eps_\frkp\vep\left(\frac{1}{2},\pi_\frkp\right)$ stated in 
Lemma \ref{lem:31}. 
This remains true in the archimedean case by \cite[(5.2)]{IY} and Lemma \ref{lem:11}(\ref{lem:113}). 
Therefore roughly half of the elements in $\rmMp_n^\psi(\pi)$ cannot be automorphic for the trivial reason. 

\begin{lemma}\label{lem:51}
Notation being as above, if $\prod_v\eps_v\neq\vep\left(\frac{1}{2},\pi\right)$, then $\rmMp^\psi_n(\pi^\eps)$ cannot appear in the space of automorphic forms on $\Mp(W_n)_\AA$. 
\end{lemma}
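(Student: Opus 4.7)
The plan is to argue by computing the global central character of $\vPi := \rmMp^\psi_n(\pi^\eps)$ at the element $-\ono_{2n} \in Z_n(F)$. Since $Z_n(F) \subset Sp_n(F)$ sits inside $\Mp(W_n)_\AA$ via the canonical splitting, any $\vPi$ occurring in the space of automorphic forms on $Sp_n(F)\bsl\Mp(W_n)_\AA$ must satisfy $\omega_\vPi(-\ono) = 1$. The proof will therefore reduce to showing that this number is precisely $\prod_v\eps_v\cdot\vep(\tfrac{1}{2},\pi)$, and then invoking the hypothesis.

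First, I would factor the global central character as $\omega_\vPi(-\ono) = \prod_v \omega_{\vPi_v}(-\ono_v)$. By the convention introduced at the start of Section \ref{sec:3}, each local factor decomposes as
\[
\omega_{\vPi_v}(-\ono_v) \;=\; \gam^{\psi_v}(-1)\cdot z^{\psi_v}(\vPi_v).
\]
Next, I would substitute the identity $z^{\psi_v}(\rmMp^{\psi_v}_n(\pi_v^{\eps_v})) = \eps_v\,\vep(\tfrac{1}{2},\pi_v)$ recalled just before the lemma (supplied by Lemma \ref{lem:31} at the finite places and by \cite{IY} together with Lemma \ref{lem:11}(\ref{lem:113}) at the archimedean places). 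After multiplying over $v$, the right-hand side becomes
\[
\left(\prod_v \gam^{\psi_v}(-1)\right)\cdot \prod_v \eps_v \cdot \prod_v \vep\bigl(\tfrac{1}{2},\pi_v\bigl).
\]

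The two remaining product formulas finish the calculation: the Weil index satisfies the reciprocity law $\prod_v\gam^{\psi_v}(\eta)=1$ for $\eta\in F^\times$ (applied with $\eta=-1$), and the global root number is $\prod_v\vep(\tfrac{1}{2},\pi_v)=\vep(\tfrac{1}{2},\pi)$. Combining these gives
\[
\omega_\vPi(-\ono) \;=\; \prod_v\eps_v \cdot \vep\bigl(\tfrac{1}{2},\pi\bigl).
\]
Under the assumption $\prod_v\eps_v\neq \vep(\tfrac{1}{2},\pi)$, both sides being $\pm 1$, this equals $-1$, so $-\ono$ acts as $-1$ on any realization of $\vPi$. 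This is incompatible with $\vPi\subset\scra(\Mp(W_n))$, proving the lemma.

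There is no substantial obstacle: the argument is essentially bookkeeping about the genuine central character of the metaplectic cover. The only point that requires care is the correct normalization of $\gam^\psi$ as a character of $\til Z_n$ so that the Hilbert/Weil reciprocity $\prod_v\gam^{\psi_v}(-1)=1$ applies cleanly; granted the setup of Section \ref{sec:3}, everything else is formal.
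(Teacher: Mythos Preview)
Your proposal is correct and is exactly the argument the paper has in mind: the paper does not write out a separate proof of Lemma \ref{lem:51} but states it immediately after recalling $z^{\psi_v}(\rmMp^{\psi_v}_n(\pi_v^{\eps_v}))=\eps_v\,\vep(\tfrac{1}{2},\pi_v)$ as ``the trivial reason'' that half the packet is non-automorphic, i.e.\ the central character obstruction you spell out. Your write-up simply makes explicit the product over $v$, the Weil reciprocity $\prod_v\gam^{\psi_v}(-1)=1$, and the definition $\vep(\tfrac{1}{2},\pi)=\prod_v\vep(\tfrac{1}{2},\pi_v)$, which is precisely what the paper leaves to the reader.
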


We employ the symbol $\eps_n(\pi_v)$ defined in Remark \ref{rem:11}. 
Put 
\[\Ik_n^\psi(\pi)=\otimes_v'\rmMp_n^{\psi_v}\bigl(\pi_v^{\eps_n(\pi_v)}\bigl)\in \rmMp_n^\psi(\pi). \]
Ikeda and the author \cite{IY} have proved the special case of Conjecture \ref{coj:11} where $\rmMp^\psi_n(\pi^\eps)$ takes the form $\Ik_n^\psi(\pi)$. 

\begin{proposition}\label{prop:51}
Let $\pi\simeq\otimes_v'\pi_v$ be an irreducible cuspial automorphic representation of $\PGL_2(\AA)$. 
If $n$ is odd, $\pi_v\simeq D_{2\kap_v}$ for $v\in\frkS_\infty$, none of $\pi_\frkp$ is supercuspidal and $\vep\left(\frac{1}{2},\pi\right)=(-1)^{\ell_\pi^-+d(n-1)/2}$, then $\rmm_\cusp(\Ik_n^\psi(\pi))=1$. 
\end{proposition}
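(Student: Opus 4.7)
The plan is to derive Proposition \ref{prop:51} from the explicit construction of Ikeda and the author in \cite{IY}, after unpacking the sign hypothesis.

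First, I would match the epsilon-sign hypothesis of the proposition with the condition in Remark \ref{rem:11}. Using the parity $n(n-1)/2\equiv (n-1)/2\pmod{2}$ for odd $n$, the archimedean factor contributes $(-1)^{d(n-1)/2}$, primes in $\frkS_\pi^-$ contribute $(-1)^{\ell_\pi^-}$, and all remaining primes contribute $+1$; hence
\[ \prod_v \eps_n(\pi_v) = (-1)^{\ell_\pi^- + d(n-1)/2}. \]
Therefore the hypothesis $\vep\bigl(\frac{1}{2},\pi\bigl) = (-1)^{\ell_\pi^- + d(n-1)/2}$ is equivalent to $\vep\bigl(\frac{1}{2},\pi\bigl)\prod_v \eps_n(\pi_v) = 1$, which is precisely the condition required by \cite{IY}.

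Second, I would invoke the Duke-Imamoglu-Ikeda-type construction of \cite{IY}: under the above sign condition, an explicit Fourier expansion produces a nonzero cuspidal automorphic form on $\Mp(W_n)_\AA$ realizing $\Ik_n^\psi(\pi) = \otimes_v' \rmMp_n^{\psi_v}(\pi_v^{\eps_n(\pi_v)})$, with no restriction on the archimedean weights $\kap_v$. This immediately yields $\rmm_\cusp(\Ik_n^\psi(\pi)) \geq 1$.

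Third, for the upper bound $\rmm_\cusp(\Ik_n^\psi(\pi)) \leq 1$, I would exploit the rigidity of the Fourier expansion of \cite{IY}. That expansion expresses every Fourier coefficient of a cuspidal realization of $\Ik_n^\psi(\pi)$ at a nondegenerate symmetric matrix $\xi$ as the product of a purely local contribution and a single Fourier coefficient (indexed by $\det(2\xi)$) of a Hilbert cusp form of half-integral weight in the Kohnen plus subspace attached to $\pi$ via the Shimura-Waldspurger correspondence. Since the latter is unique up to a scalar on $\Mp(W_1)_\AA$ by Waldspurger's theorem \cite{W2}, any two cuspidal realizations of $\Ik_n^\psi(\pi)$ must be proportional, giving the upper bound. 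The main obstacle in executing this plan is the rigidity step: one must verify that the prescribed factorization of Fourier coefficients holds for any (not merely the constructed) cuspidal realization of $\Ik_n^\psi(\pi)$, which at each place $v$ reduces to a uniqueness statement for the degenerate Fourier-Jacobi functional on $\rmMp_n^{\psi_v}(\pi_v^{\eps_n(\pi_v)})$ built into the Ikeda-type expansion---precisely the technical heart of \cite{IY}.
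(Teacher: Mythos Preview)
Your proposal is correct and follows essentially the same approach as the paper: both reduce the statement to the results of \cite{IY} after verifying that the sign hypothesis matches the condition $\vep\bigl(\frac{1}{2},\pi\bigr)\prod_v\eps_n(\pi_v)=1$. The paper's proof is slightly more concrete in two respects you glossed over: it uses Lemma~\ref{lem:44} to rewrite $\Ik_n^\psi(\pi)$ explicitly as a tensor product of lowest weight representations, Steinberg-type representations $\St_n^{\psi_\frkp^\xi}$, and degenerate principal series $I_n^{\psi_\frkp}(\chi_\frkp)$ (which is the form in which \cite{IY} is stated), and it applies \cite{IY} to the twist $\pi\otimes\hat\chi^{(-1)^{(n-1)/2}\eta}$ rather than to $\pi$ itself; it then simply cites Theorem~1.2 and Corollary~8.10 of \cite{IY} to obtain $\rmm_\cusp=1$ in one stroke, rather than separating the lower and upper bounds as you do.
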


\begin{remark}\label{rem:51}  
Note that $\prod_v\eps_n(\pi_v)=(-1)^{\ell_\pi^-+d(n-1)/2}$. 
Since $\pi_\frkp$ is not supercuspidal, there exists $\chi_\frkp\in\Ome(F_\frkp^\times)$ such that $\pi_\frkp$ is equivalent to the unique irreducible subrepresentation of $I(\chi_\frkp^{},\chi_\frkp^{-1})$. 
Then
\[\vep\left(\frac{1}{2},\pi\right)(-1)^{\ell_\pi^-}=(-1)^{\sum_{v\in\frkS_\infty}\kap_v}\prod_\frkp\chi_\frkp(-1)\]
by Lemma \ref{lem:11}.  
In particular, for $\xi\in F^\times$
\beq 
\vep\left(\frac{1}{2},\pi\otimes\hat\chi^\xi\right)(-1)^{\ell_{\pi\otimes\hat\chi^\xi}^-}=\vep\left(\frac{1}{2},\pi\right)(-1)^{\ell_\pi^-}\prod_{v\in\frkS_\infty}\sgn_v(\xi). \label{tag:51}
\eeq
\end{remark}

\begin{proof}
Fix a totally positive element $\eta$ in $F$. 
Put $\mu_\frkp=\chi_\frkp\hat\chi^{(-1)^{(n-1)/2}\eta}_\frkp$. 
Then $(-1)^{\sum_{v\in\frkS_\infty}\kap_v}\prod_\frkp\mu_\frkp(-1)=1$ by assumption. 
Take $\xi\in F^\times$ so that $\chi_\frkp=\hat\chi^\xi_\frkp\alp_\frkp^{1/2}$ for $\frkp\in\frkS_\pi$. 
Since 
\[\Ik^\psi_n(\pi)\simeq(\otimes_{v\in\frkS_\infty}\frkD^{(n)}_{(2\kap_v+n)/2})\otimes(\otimes_{\frkp\in\frkS_\pi}\St_n^{\psi_\frkp^\xi})\otimes(\otimes'_{\frkp\notin\frkS_\pi}I^{\psi_\frkp}_n(\chi_\frkp))\]
by Lemma \ref{lem:44}, 
Theorem 1.2 and Corollary 8.10 of \cite{IY} applied to $\pi\otimes\hat\chi^{(-1)^{(n-1)/2}\eta}$ say that 
$\rmm_\cusp(\Ik^\psi_n(\pi))=1$. 
\end{proof}

We will construct CAP representations of orthogonal groups as theta lifts of $\Ik^\psi_n(\pi)$. 
Then we realize the other automorphic representations in $\rmMp_n^\psi(\pi)$ as theta lifts of these CAP representations. 
If $\prod_v\vep_v=1$, then there is a quadratic space $V_n^\vep$ of dimension $2n+1$ and discriminant 1 such that $V_n^\vep(F_v)\simeq V_n^{\vep_v}$ for all $v$ by the Minkowski-Hasse theorem (cf. Theorem 4.4 of \cite{Sh5}). 
Let $\rmSO_n^\vep(\pi)=\otimes'_v\rmSO_n^{\vep_v}(\pi_v)$ be a representation of $\SO(V^\vep_n,\AA)$. 

\begin{theorem}\label{thm:51}
Let $n$ be odd and $\pi\simeq\otimes_v'\pi_v$ an irreducible cuspial automorphic representation of $\PGL_2(\AA)$ such that $\pi_v\simeq D_{2\kap_v}$ with $\kap_v>\frac{n}{2}$ for $v\in\frkS_\infty$ and such that none of $\pi_\frkp$ is supercuspidal. 
Then $\rmm_\cusp(\rmSO_n^\vep(\pi))=1$. 
\end{theorem}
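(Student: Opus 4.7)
The plan is to realize $\rmSO_n^\vep(\pi)$ cuspidally as the theta lift of a quadratic twist of the representation $\Ik_n^\psi$, anchored by Proposition~\ref{prop:51} on the metaplectic side, and to convert the central $L$-value hypothesis of Proposition~\ref{prop:22} into a condition on that twist which can be met by Waldspurger's nonvanishing theorem.

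First I would choose $\eta\in F^\times$ satisfying three demands: (i) $L\bigl(\frac{1}{2},\pi\otimes\hat\chi^\eta\bigr)\neq 0$; (ii) the root number condition $\vep\bigl(\frac{1}{2},\pi\otimes\hat\chi^\eta\bigr)=(-1)^{\ell^-_{\pi\otimes\hat\chi^\eta}+d(n-1)/2}$ required by Proposition~\ref{prop:51}, which by (\ref{tag:51}) reduces to a parity condition on $\prod_{v\in\frkS_\infty}\sgn_v(\eta)$; and (iii) the local matching $\vep_v=\eps_n((\pi\otimes\hat\chi^\eta)_v)\cdot\bigl(\frac{\eta}{\pi_v}\bigr)_v$ at every place $v$, designed so that the theta correspondence with character $\psi^\eta$ outputs the prescribed local sign pattern via Lemma~\ref{lem:31}, in particular (\ref{tag:32}), together with Remark~\ref{rem:31}. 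Since (ii) fixes only the product of archimedean signs of $\eta$ and (iii) imposes conditions at only finitely many ramified primes, Waldspurger's theorem \cite[Theorem~4]{W2} (or \cite{FH}) leaves enough room to further enforce (i).

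Setting $\pi':=\pi\otimes\hat\chi^\eta$, Proposition~\ref{prop:51} applied to $\pi'$ gives $\rmm_\cusp(\Ik_n^\psi(\pi'))=1$. The doubling $L$-function $L_\psi(s,\Ik_n^\psi(\pi'))$ coincides, by (\ref{tag:21}) and Lemma~\ref{lem:31}, with the standard $L$-function of the abstract representation $\rmSO_n^\vep(\pi')$, which factors as a product of shifts $L(s\pm j,\pi')$ for $0\leq j\leq(n-1)/2$; this is entire because $\pi'$ is cuspidal on $\PGL_2(\AA)$, and it is nonzero at $s=\frac{1}{2}$ by (i) together with the nonvanishing of $L$ outside the critical strip (automatic since $\kap_v>n/2$ controls the relevant archimedean gamma factors). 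Proposition~\ref{prop:22}(1) applied with the Weil representation for $\psi^\eta$ then produces a cuspidal theta lift on $\SO(V_n^\vep,\AA)$ of multiplicity $1$; by (\ref{tag:32}) and (iii) its local components are $\rmSO_n^{\vep_v}(\pi'_v\otimes\hat\chi^\eta_v)=\rmSO_n^{\vep_v}(\pi_v)$, yielding $\rmm_\cusp(\rmSO_n^\vep(\pi))\geq 1$.

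Running the correspondence in the opposite direction proves the upper bound: after twisting by the automorphic character $\hat\chi^{-\eta}\circ x_n^\vep$, any cuspidal realization of $\rmSO_n^\vep(\pi)$ becomes one of $\rmSO_n^\vep(\pi')$ of the same multiplicity, and Proposition~\ref{prop:22}(2) applied with character $\psi^\eta$ transfers it back to a cuspidal copy of $\Ik_n^\psi(\pi')$ of the same multiplicity, which Proposition~\ref{prop:51} bounds by $1$. The main obstacle is step~(iii): one must verify that the sign equation admits a solution $\eta$ -- noting that $\rmSO_n^\vep(\pi)\neq 0$ already forces $\vep_\frkp=+$ wherever $\pi_\frkp$ is an irreducible principal series, since $\rmSO_n^-(\pi_\frkp)=0$ there -- and that the resulting local constraints at the primes of $\frkS_\pi$ and at $\frkS_\infty$ still leave Waldspurger's method sufficient flexibility to achieve (i). A case analysis using Lemma~\ref{lem:11} and the definition of $\bigl(\frac{\eta}{\pi_\frkp}\bigr)$ shows the local equations are indeed solvable, reducing the whole argument to the availability of the nonvanishing twist.
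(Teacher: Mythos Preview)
Your plan contains a genuine gap at step (iii). You want a single $\eta$ so that lifting $\Ik_n^\psi(\pi')$ via $\psi^\eta$ lands on $V_n^\vep$, i.e.\ $\vep_\frkp=\eps_n(\pi'_\frkp)\cdot\bigl(\frac{\eta}{\pi_\frkp}\bigr)$ at each finite $\frkp$. But this quantity does not actually depend on $\eta$. Write $\pi_\frkp\simeq\St\otimes\chi_\frkp$ for $\frkp\in\frkS_\pi$; using Lemma~\ref{lem:11}(\ref{lem:112}) one has $\bigl(\frac{\eta}{\pi_\frkp}\bigr)=(-1)^{d_{\chi_\frkp\hat\chi^\eta_\frkp}-d_{\chi_\frkp}}$, while $\eps_n(\pi'_\frkp)=-$ precisely when $\chi_\frkp\hat\chi^\eta_\frkp$ is trivial. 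A two-line case check gives
\[
\eps_n(\pi'_\frkp)\cdot\Bigl(\tfrac{\eta}{\pi_\frkp}\Bigr)\;=\;\eps_n(\pi_\frkp)\qquad\text{for every }\eta.
\]
So your condition (iii) collapses to $\vep_\frkp=\eps_n(\pi_\frkp)$ for all finite $\frkp$, which is a fixed sign pattern and \emph{not} attainable for a general $\vep$. The sentence ``A case analysis \dots\ shows the local equations are indeed solvable'' is therefore false as written.

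The paper circumvents this by decoupling the two tasks. It first invokes the spinor-norm twist invariance $\rmm_\cusp(\rmSO_n^\vep(\pi))=\rmm_\cusp(\rmSO_n^\vep(\pi\otimes\hat\chi^\xi))$ (your own observation, used only for the upper bound) to replace $\pi$ by a twist for which Proposition~\ref{prop:51} applies directly, and then lifts $\Ik_n^\psi(\pi)$ itself---\emph{not} $\Ik_n^\psi(\pi')$---via $\psi^\eta$. The resulting local condition is $\vep_\frkp=\eps_n(\pi_\frkp)\cdot\bigl(\frac{\eta}{\pi_\frkp}\bigr)$, where now $\eps_n(\pi_\frkp)$ is fixed and $\bigl(\frac{\eta}{\pi_\frkp}\bigr)$ genuinely varies with $\eta$, so both signs are reachable. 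The lift is then $\rmSO_n^\vep(\pi\otimes\hat\chi^\eta)$, and one concludes via the twist invariance again. Note also that Proposition~\ref{prop:22} already gives the \emph{equality} of multiplicities, so once the lift is identified there is no need to run the correspondence separately in both directions.
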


\begin{remark}\label{rem:52}
Put $\frkS_\vep=\{v\;|\;\vep_v=-\}$. 
The representation $\rmSO_1^\vep(\pi)$ is the Jacquet-Langlands lift of $\pi$ to $\mathrm{P}D_\vep^\times$, where $D_\vep$ is a quaternion algebra over $F$ which is ramified precisely at places in $\frkS_\vep$ (cf. Proposition \ref{prop:41}).  
\end{remark}

\begin{proof}
Let $x_n^\vep=\prod_vx^{\vep_v}_n:\SO(V_n^\vep,\AA)\to\AA^\times/\AA^{\times 2}$ stand for the spinor norm. 
Since $\rmSO_n^\vep(\pi)\simeq\rmSO_n^\vep(\pi\otimes\hat\chi^\xi)\otimes\hat\chi^\xi\circ x_n^\vep$ by Lemmas \ref{lem:31} and \ref{lem:32}, we see that 
\beq
\rmm_\cusp(\rmSO_n^\vep(\pi))=\rmm_\cusp(\rmSO_n^\vep(\pi\otimes\hat\chi^\xi)) \label{tag:52}
\eeq
for all $\xi\in F^\times$. 
In light of Remark \ref{rem:51} we may impose the condition $\vep\left(\frac{1}{2},\pi\right)=\prod_v\eps_n(\pi_v)$ at the cost of replacing $\pi$ with $\pi\otimes\hat\chi^\xi$ for a suitably chosen $\xi\in F^\times$. 
Then $\rmm_\cusp(\Ik^\psi_n(\pi))=1$ by Proposition \ref{prop:51}.

Take an element $\eta\in F^\times$ such that $(-1)^{(n-1)/2}\vep_v\eta$ is positive in $F_v$ for $v\in\frkS_\infty$ and such that $\frkS_{\pi\otimes\hat\chi^\eta}^-=\{\frkp\;|\;\vep_\frkp=-\}$. 
Then 
\[\vep\left(\frac{1}{2},\pi\otimes\hat\chi^\eta\right)
=(-1)^{\ell_\pi^-+\ell_{\pi\otimes\hat\chi^\eta}^-}\vep\left(\frac{1}{2},\pi\right)\prod_{v\in\frkS_\infty}(-1)^{(n-1)/2}\vep_v=1\]
by (\ref{tag:51}). 
We may further assume that $L\bigl(\frac{1}{2},\pi\otimes\hat\chi^\eta\bigl)\neq 0$ on account of Theorem 4 of \cite{W2}. 
The complete $L$-function 
\[L(s,\pi\otimes\hat\chi^\eta)=L_\bff(s,\pi\otimes\hat\chi^\eta)\prod_{v\in\frkS_\infty}2(2\pi)^{-s-(2\kap_v-1)/2}\vGm\left(s+\kap_v-\frac{1}{2}\right) \] 
is entire and has no zeros in the right half-plane $\Re s\geq\frac{3}{2}$. 
It has no zeros in the left half-plane $\Re s\leq -\frac{1}{2}$ by its functional equation. 
Thus $L_\bff(s,\pi\otimes\hat\chi^\eta)$ is nonzero at $s=1+\frac{n}{2}-j$ for $j=1,2,\dots,n$ by the assumption on $\kap$. 
 
The finite part of the $L$-function of $\Ik^\psi_n(\pi)$ with respect to $\psi^\eta$ is  
\[L_{\psi_\bff^\eta}(s,\Ik^\psi_n(\pi))=\prod_{j=1}^n L_\bff\left(s+\frac{n+1}{2}-j,\pi\otimes\hat\chi^\eta\right) \]
(cf. (\ref{tag:21}) and Lemma \ref{lem:31}). 
It is entire and has no zero at $s=\frac{1}{2}$. 
Since $\frkD_{(2\kap_v+n)/2}^{(n)}$ and $\bar\frkD_{(2\kap_v+n)/2}^{(n)}$ are discrete series representations under the condition that $\kap_v>\frac{n}{2}$, their $L$-factors are holomorphic for $\Re s>0$ by Lemma 7.2 of \cite{Y3}. 
Therefore the complete $L$-function of $\Ik^\psi_n(\pi)$ is holomorphic for $\Re s>0$. 
It is entire by the functional equation. 
Since it has no zero at $s=\frac{1}{2}$, Proposition \ref{prop:22}(\ref{prop:221}) gives a quadratic space $V$ of dimension $2n+1$ and discriminant $1$ such that $\tht^{\psi^\eta}_V(\Ik^\psi_n(\pi))$ is nonzero and cuspidal. 
Because of the choice of $\eta$, Lemmas \ref{lem:31}, \ref{lem:32} and Proposition \ref{prop:41} show that
\begin{align*}
V&\simeq V_n^\vep, & 
\tht^{\psi^\eta}_{V_n^\vep}(\Ik^\psi_n(\pi))&\simeq\rmSO_n^\vep(\pi\otimes\hat\chi^\eta). 
\end{align*}
We finally see that 
\[\rmm_\cusp(\rmSO_n^\vep(\pi))=\rmm_\cusp(\tht^{\psi^\eta}_{V_n^\vep}(\Ik^\psi_n(\pi)))=\rmm_\cusp(\Ik^\psi_n(\pi))=1 \]
by (\ref{tag:52}) and the multiplicity preservation stated in Proposition \ref{prop:22}. 
\end{proof}

\begin{theorem}\label{thm:53}
If $\kap_v>\frac{n}{2}$ for all $v\in\frkS_\infty$, then Conjecture \ref{coj:11} is equivalent to Conjecture \ref{coj:12}. 
\end{theorem}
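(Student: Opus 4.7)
The plan is to use the theta correspondence between $\Mp(W_n)_\AA$ and $\SO(V_n^\vep,\AA)$ to transfer multiplicities between the two sides, thereby establishing, for every $\eps$ with $\prod_v\eps_v=\vep\bigl(\frac{1}{2},\pi\bigl)$ and the corresponding $\vep$ with $\prod_v\vep_v=1$ matched by the local Howe correspondence of Section \ref{sec:3}, the identity
\[
\rmm_\cusp\bigl(\otimes_v'\rmMp_n^{\psi_v}(\pi_v^{\eps_v})\bigr)=\rmm_\cusp\bigl(\otimes_v'\rmSO_n^{\vep_v}(\pi)\bigr).
\]
Given this identity, the predicted multiplicity $\frac{1}{2}\bigl(1+\vep\bigl(\frac{1}{2},\pi\bigl)\prod_v\eps_v\bigr)$ of Conjecture \ref{coj:11} equals $1$ exactly when Conjecture \ref{coj:12}'s predicted multiplicity equals $1$; the complementary vanishing case $\prod_v\eps_v\neq\vep\bigl(\frac{1}{2},\pi\bigl)$ is already handled by Lemma \ref{lem:51}. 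The engine behind the identity is Proposition \ref{prop:22}, which preserves cuspidal multiplicity under theta lifting once the requisite $L$-function hypotheses are verified.

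\textbf{Executing the transfer.} Following the strategy of the proof of Theorem \ref{thm:51}, I choose $\eta\in F^\times$ satisfying three conditions: (i) $\sgn_v(\eta)=\eps_v\vep_v$ at every $v\in\frkS_\infty$; (ii) $\bigl(\frac{\eta}{\pi_\frkp}\bigl)=\eps_\frkp\vep_\frkp$ at every $\frkp\in\frkS_\pi$; (iii) $L\bigl(\frac{1}{2},\pi\otimes\hat\chi^\eta\bigl)\neq 0$. The first two are local sign prescriptions at a finite set of places, whose consistency with the global parity relations is a direct epsilon-factor computation via Lemma \ref{lem:11} and \eqref{tag:51}, and they are realized by weak approximation. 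Theorem 4 of \cite{W2} then secures (iii) inside the resulting class of admissible $\eta$'s. With such $\eta$ in hand, \eqref{tag:31} and Proposition \ref{prop:41} realize $\otimes_v'\rmMp_n^{\psi_v}(\pi_v^{\eps_v})$ as a member of the metaplectic packet for $\pi\otimes\hat\chi^\eta$ with respect to $\psi^\eta$, and Lemmas \ref{lem:31}, \ref{lem:32} identify its $\psi^\eta$-theta lift to the quadratic space of dimension $2n+1$ and discriminant $1$ prescribed by $\vep$ as $\otimes_v'\rmSO_n^{\vep_v}(\pi\otimes\hat\chi^\eta)$. By \eqref{tag:21} and Lemma \ref{lem:31}, the associated complete $L$-function factors as $\prod_{j=1}^n L\bigl(s+\frac{n+1}{2}-j,\pi\otimes\hat\chi^\eta\bigl)$, which under $\kap_v>\frac{n}{2}$ is entire (by the archimedean analysis carried out in the proof of Theorem \ref{thm:51}) and nonvanishing at $s=\frac{1}{2}$ by (iii). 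Proposition \ref{prop:22}(\ref{prop:221}) then yields
\[
\rmm_\cusp\bigl(\otimes_v'\rmMp_n^{\psi_v}(\pi_v^{\eps_v})\bigr)=\rmm_\cusp\bigl(\otimes_v'\rmSO_n^{\vep_v}(\pi\otimes\hat\chi^\eta)\bigr),
\]
and untwisting via $\rmSO_n^{\vep_v}(\pi\otimes\hat\chi^\eta)\simeq\rmSO_n^{\vep_v}(\pi)\otimes\hat\chi^\eta\circ x_n^{\vep_v}$ (Lemma \ref{lem:31}, Lemma \ref{lem:32}(\ref{lem:323}), Remark \ref{rem:31}) preserves cuspidal multiplicity, since tensoring with an automorphic character is an isomorphism on automorphic forms. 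The converse implication follows symmetrically from Proposition \ref{prop:22}(\ref{prop:222}).

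\textbf{Main obstacle.} The delicate point is the simultaneous arrangement of (i)--(iii): given arbitrary $\eps$ and $\vep$ compatible with the global parity, one must produce a single $\eta\in F^\times$ inducing the prescribed local signs at every place of $\frkS_\infty\cup\frkS_\pi$ while additionally satisfying the nonvanishing of the twisted central $L$-value. This reduces to showing that the map
\[
F^\times\longrightarrow\prod_{v\in\frkS_\infty}\mu_2\times\prod_{\frkp\in\frkS_\pi}\mu_2,\qquad \eta\mapsto\bigl(\sgn_v(\eta)\bigr)_v\times\bigl(\tfrac{\eta}{\pi_\frkp}\bigr)_\frkp
\]
surjects onto the subgroup cut out by the global parity relation, which follows from weak approximation together with the explicit local description of $\bigl(\frac{\eta}{\pi_\frkp}\bigl)$ supplied by Lemma \ref{lem:11}, combined with Waldspurger's density of nonvanishing quadratic twists. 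Once $\eta$ is secured, the remainder of the argument mirrors the multiplicity transfer carried out in the proof of Theorem \ref{thm:51}, save that the theta lift is applied to an arbitrary packet member rather than to the specific Ikeda element $\Ik_n^\psi(\pi)$.
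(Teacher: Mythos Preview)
Your approach is correct and uses the same ingredients as the paper (Proposition \ref{prop:22}, Lemmas \ref{lem:31}--\ref{lem:32}, and Waldspurger's nonvanishing), but the organization for the direction Conjecture \ref{coj:12} $\Rightarrow$ Conjecture \ref{coj:11} is different. The paper fixes a \emph{single} $\eta$ with $L\bigl(\frac{1}{2},\pi\otimes\hat\chi^\eta\bigr)\neq 0$ (via Theorem A.2 of \cite{PS}, with no local constraints), lifts every $\rmSO_n^\vep(\pi\otimes\hat\chi^\eta)$ to the metaplectic side by the $\psi^\eta$-theta correspondence, and then argues by counting: Proposition \ref{prop:21} makes $\vep\mapsto\hat\vep$ injective, Lemma \ref{lem:51} forces $\prod_v\hat\vep_v=\vep\bigl(\frac{1}{2},\pi\bigr)$, and equality of cardinalities gives surjectivity. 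You instead select $\eta$ adaptively for each pair $(\eps,\vep)$, which makes the bijection explicit and renders the two implications entirely symmetric. The price is that you must impose local sign conditions on $\eta$ at every place of $\frkS_\infty\cup\frkS_\pi$ while still securing $L$-nonvanishing, and in particular you need surjectivity of $\eta_\frkp\mapsto\bigl(\frac{\eta_\frkp}{\pi_\frkp}\bigr)$ onto $\mu_2$ at each $\frkp\in\frkS_\pi$. Your appeal to Lemma \ref{lem:11} handles only twists of Steinberg; Theorem \ref{thm:53} does not exclude supercuspidal $\pi_\frkp$, so you should note that the surjectivity there is a separate (standard) fact from Waldspurger's local theory, not a consequence of Lemma \ref{lem:11}. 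The paper's counting argument sidesteps this point entirely.
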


\begin{proof}
We shall derive Conjecture \ref{coj:11} from Conjecture \ref{coj:12}. 
The opposite implication can be proved by a similar reasoning as in the proof of Theorem \ref{thm:51}. 
Theorem A.2 of \cite{PS} gives $\eta\in F^\times$ such that $L\left(\frac{1}{2},\pi\otimes\hat\chi^\eta\right)\neq 0$. 
Let $\vep:\frkS_\infty\cup\frkS_\pi\to\mu_2$ be such that $\prod_v\vep_v=1$. 
By Conjecture \ref{coj:12} and (\ref{tag:52}) $\rmSO^\vep_n(\pi\otimes\hat\chi^\eta)$ is a cuspidal automorphic representation of $\SO(V_n^\vep,\AA)$. 

In the proof of Theorem \ref{thm:51} we have seen that $L(s,\rmSO^\vep_n(\pi\otimes\hat\chi^\eta))$ is entire and has no zero at $s=\frac{1}{2}$. 
We can therefore apply Proposition \ref{prop:22}(\ref{prop:222}) to see that $\tht^{\psi^\eta}_n(\rmSO^\vep_n(\pi\otimes\hat\chi^\eta))$ occurs in $\scra_\cusp(\Mp(W_n))$ with multiplicity one. 
It belongs to $\rmMp_n^\psi(\pi)$ by Lemmas \ref{lem:31} and \ref{lem:32}. 
Namely, we can write $\tht^{\psi^\eta}_n(\rmSO^\vep_n(\pi\otimes\hat\chi^\eta))=\rmMp_n^\psi(\pi^{\hat\vep})$ with a function $\hat\vep:\frkS_\infty\cup\frkS_\pi\to\mu_2$. 
Proposition \ref{prop:21} implies that if $\vep\neq\vep'$, then $\hat\vep\neq\hat\vep'$. 
We have thus produced $2^{d+\ell_\pi-1}$ mutually nonequivalent irreducible cuspidal automorphic representations of $\Mp(W_n)_\AA$. 
Lemma \ref{lem:51} forces $\hat\vep$ to satisfy $\vep\left(\frac{1}{2},\pi\right)=\prod_v\hat\vep_v$. 
\end{proof}

The following result is derived as a corollary from Theorems \ref{thm:51} and \ref{thm:53}. 

\begin{theorem}\label{thm:52}
Let $\pi$ be an irreducible cuspial automorphic representation of $\PGL_2(\AA)$ such that $\pi_v\simeq D_{2\kap_v}$ with $\kap_v>\frac{n}{2}$ for $v\in\frkS_\infty$. 
If $n$ is odd and none of $\pi_\frkp$ is supercuspidal, 
 then $\rmm_\cusp(\rmMp_n^\psi(\pi^\eps))=\frac{1}{2}\bigl(1+\vep\left(\frac{1}{2},\pi\right)\prod_v\eps_v\bigl)$. 
\end{theorem}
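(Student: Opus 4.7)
The plan is to obtain Theorem \ref{thm:52} as a formal consequence of Theorems \ref{thm:51} and \ref{thm:53}, together with the vanishing statement of Lemma \ref{lem:51}. I split the assertion into two cases according to the sign $\vep\bigl(\frac{1}{2},\pi\bigl)\prod_v\eps_v\in\mu_2$.

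When $\vep\bigl(\frac{1}{2},\pi\bigl)\prod_v\eps_v=-1$, the claimed multiplicity is $0$. This is exactly what Lemma \ref{lem:51} asserts: any $\rmMp_n^\psi(\pi^\eps)$ whose signs violate $\prod_v\eps_v=\vep\bigl(\frac{1}{2},\pi\bigl)$ fails to embed into the space of automorphic forms on $\Mp(W_n)_\AA$, and a fortiori into $\scra_\cusp(\Mp(W_n))$.

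When $\vep\bigl(\frac{1}{2},\pi\bigl)\prod_v\eps_v=+1$, the claim reduces to $\rmm_\cusp(\rmMp_n^\psi(\pi^\eps))=1$, which is exactly Conjecture \ref{coj:11} for the given $\pi$. Since $\kap_v>\frac{n}{2}$ for all $v\in\frkS_\infty$ by hypothesis, Theorem \ref{thm:53} reduces Conjecture \ref{coj:11} for $\pi$ to Conjecture \ref{coj:12} for $\pi$. Under the standing assumption that none of $\pi_\frkp$ is supercuspidal, the latter is precisely the content of Theorem \ref{thm:51}. Chaining these implications yields the desired multiplicity one.

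The real substance of the argument is not this formal deduction but rather the two ingredients it invokes: the theta-lifting construction of Theorem \ref{thm:51} (which relies on Proposition \ref{prop:51} together with a careful choice of $\eta\in F^\times$ ensuring that $L\bigl(\frac{1}{2},\pi\otimes\hat\chi^\eta\bigl)\neq 0$ and $\frkS^-_{\pi\otimes\hat\chi^\eta}=\{\frkp\mid\vep_\frkp=-\}$), and the reverse theta lift from $\SO(V_n^\vep,\AA)$ back to $\Mp(W_n)_\AA$ underlying Theorem \ref{thm:53}. These two steps, in turn, depend on the multiplicity preservation in Proposition \ref{prop:22} and on Proposition \ref{prop:21} to distinguish the different global packets. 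Granted these, Theorem \ref{thm:52} is immediate, so no additional argument is required at this stage.
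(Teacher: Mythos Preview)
Your proposal is correct and follows the same route as the paper, which simply records Theorem \ref{thm:52} as a corollary of Theorems \ref{thm:51} and \ref{thm:53}. Your separate invocation of Lemma \ref{lem:51} for the case $\vep\bigl(\frac{1}{2},\pi\bigr)\prod_v\eps_v=-1$ is harmless but redundant: the equivalence in Theorem \ref{thm:53} already covers both signs (its proof uses Lemma \ref{lem:51} internally to pin down which half of the packet is realized), so once Theorem \ref{thm:51} verifies Conjecture \ref{coj:12} you obtain the full statement of Conjecture \ref{coj:11} directly.
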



\part{CAP representations for symplectic groups}\label{part:2}

\section{Theta correspondence revisited}\label{sec:6}

Let $F$ be a finite extension of $\QQ_p$. 
When $\sig$ is an irreducible admissible representation of $\O(V)$, we write $\bfn(\sig)$ for the smallest integer $k$ such that $\tht^\psi_k(\sig)$ is nonzero. 
Recall the conservation relation conjectured by Kudla and Rallis \cite{KR2} and proved by Sun and Zhu \cite{SZ}. 

\begin{proposition}[Sun-Zhu \cite{SZ}]\label{prop:61}
For every irreducible admissible representation $\sig$ of $\O(V)$ the following identity holds:  
\[\bfn(\sig)+\bfn(\sig\otimes\sgn_{\O(V)})=\dim V. \]
\end{proposition}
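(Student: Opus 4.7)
The plan is to decompose the conservation identity into the two inequalities
\[\bfn(\sig) + \bfn(\sig \otimes \sgn_{\O(V)}) \geq \dim V \quad \text{and} \quad \bfn(\sig) + \bfn(\sig \otimes \sgn_{\O(V)}) \leq \dim V,\]
the first (lower) bound being the classical Kudla--Rallis inequality and the second (upper) bound being the ``conservation conjecture'' resolved by Sun--Zhu.

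For the lower bound my plan is to exploit the doubling method inside the Siegel degenerate principal series $I_k(s)$ of $Sp_{2k}(F)$. By the Kudla--Sweet reducibility analysis, at the critical values of $s$ tied to $\dim V$ the module $I_k(s)$ has exactly two sub/quotient pieces, one of which receives theta lifts of representations from $\O(V)$ and the other of representations twisted by $\sgn_{\O(V)}$. Matching the non-vanishing of $\tht^\psi_k(\sig)$ and of $\tht^\psi_{k'}(\sig \otimes \sgn_{\O(V)})$ with these two pieces, at the minimal admissible $k$ and $k'$, forces $k + k' \geq \dim V$.

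For the upper bound I argue by contradiction, following Sun--Zhu. Assume $\bfn(\sig) + \bfn(\sig \otimes \sgn_{\O(V)}) > \dim V$ and set $k = \bfn(\sig) - 1$. The idea is to filter the Weil representation $\ome^\psi_V$ of $Sp_{2k}(F) \times \O(V)$ by Jacquet modules along a maximal parabolic of $Sp_{2k}(F)$, using the Bernstein--Zelevinsky geometric lemma together with a ``Leibniz rule'' for the Fourier--Jacobi layers of a Weil representation. Each successive layer is itself a Weil representation for a smaller dual pair, twisted by Jacobi data; one seeks a layer whose matrix-coefficient pairing with $\sig$ or with $\sig \otimes \sgn_{\O(V)}$ is non-zero, for this produces a non-vanishing theta lift at an index strictly below $\bfn(\sig)$ or $\bfn(\sig \otimes \sgn_{\O(V)})$, contradicting the hypothesis.

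The main obstacle is the upper bound: one needs fine combinatorial control over the successive Jacquet layers of $\ome^\psi_V$ simultaneously keeping track of the $\O(V)$-action and its interplay with the $\sgn_{\O(V)}$-twist, together with a non-vanishing statement for an auxiliary matrix-coefficient integral at a critical point. It is here that all of the technical input of Sun--Zhu is concentrated, and I would not expect to be able to shortcut this step.
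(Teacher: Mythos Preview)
The paper does not prove this proposition: it is stated with attribution to Sun--Zhu \cite{SZ} and then used as a black box (see the sentence introducing Proposition~\ref{prop:61}: ``Recall the conservation relation conjectured by Kudla and Rallis \cite{KR2} and proved by Sun and Zhu \cite{SZ}''). There is therefore no proof in the paper to compare your proposal against.

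As a sketch of the Sun--Zhu argument itself, your description of the lower bound is accurate: that inequality is the classical Kudla--Rallis result, obtained exactly as you say by locating the theta lifts of $\sig$ and $\sig\otimes\sgn$ in complementary pieces of the degenerate principal series at the relevant reducibility point. Your sketch of the upper bound, however, does not match what Sun--Zhu actually do. Their argument is not a Bernstein--Zelevinsky style Jacquet-module filtration of $\ome^\psi_V$ along a maximal parabolic of $Sp_{2k}$; rather, the heart of their proof is a non-vanishing statement for a specific trilinear form (what they call a generalized semi-character, built from the MVW-involution and the doubling construction), established by an explicit construction of a test vector in a Schwartz space. The Jacquet-module/Fourier--Jacobi picture you describe is closer in spirit to earlier partial results (e.g.\ Kudla's induction principle, or M\'inguez's work for type~II pairs) that did not suffice in general. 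So while your high-level bisection into the two inequalities is correct and you rightly identify the upper bound as the crux, the mechanism you outline for it is not the one that actually works.
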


Let $\calv^-_2$ be the anisotropic quadratic space of dimension $4$. 
Put 
\begin{align*}
\calv_n^+&=\scrh_n, & 
\calv_n^-&=\calv^-_2\oplus\scrh_{n-2}
\end{align*}
for $n\geq 2$. 
We denote the Clifford group of $\calv^\vep_n$ by $G(\calv^\vep_n)$. 
Theorem 3.6(\roman{thr}) of \cite{Sh4} gives an isomorphism $\tau^\vep_n$ of $G(\calv^\vep_n)/F^\times$ onto $\O(\calv^\vep_n)$. 
Given $g\in\O(\calv^\vep_n)$, take an element $h\in G(\calv^\vep_n)$ so that $\tau^\vep_n(h)=g$. 
We denote by $x^\vep_n(g)$ the coset represented by $\nu^\vep_n(h)$ in $F^\times/F^{\times 2}$, where the homomorphism $\nu^\vep_n:G(\calv^\vep_n)\to F^\times$ is defined in (3.4) of \cite{Sh4}. 
Clearly $g\mapsto x^\vep_n(g)$ gives a homomorphism of $\O(\calv^\vep_n)\to F^\times/F^{\times 2}$. 

We shall describe some basic properties of theta correspondence for similitudes and relate it to the usual theta correspondences for isometric groups. 
The symplectic similitude group of the symplectic space $W_k$ is defined by 
\[\GSp(W_k)=\GSp_k=\{g\in\GL_{2k}\;|\;gJ_k\trs g=\lam_k(g)J_k\}, \]
where $\lam_k(g)$ is a scalar. 
Let $\lam_n^\pm$ be the similitude factor of $\GO(\calv_n^\pm)$.  
As is well-known, $\lam_n^\pm(\GO(\calv_n^\pm))=F^\times$. 
When $\vPi$ is a representation of $Sp_n$ (resp. $\O(\calv_n^\pm)$) and $h\in\GSp_n$ (resp. $\GO(\calv_n^\pm)$), 
let $\vPi^h$ denote the equivalence class of the representation $g\mapsto\vPi(h^{-1}gh)$. 
 
We shall consider the product group $R=\GO(\calv_n^\pm)\times\GSp(W_k)$. 
This group contains the subgroup 
\[R_0=\{(h,g)\in R\;|\;\lam^\pm_n(h)\lam_k(g)=1\}. \]
The Weil representation $\ome^\psi_{\calv_n^\pm}$ naturally extends to the group $R_0$. 
Now consider the compactly induced representation $\ome_{\calv_n^\eps}=\cind^R_{R_0}\ome^\psi_{\calv_n^\eps}$, which is independent of $\psi$ as a representation of $R$. 
For any irreducible representation $\sig$ of $\GO(\calv_n^\eps)$ (resp. $\GSp(W_k)$) the maximal $\sig$-isotypic quotient of $\ome_{\calv_n^\eps}$ has the form $\sig\boxtimes\vTh(\sig)$, where $\vTh(\sig)$ is a representation of $\GSp(W_k)$ (resp. $\GO(\calv_n^\eps)$). 
Further, we let $\tht(\sig)$ be the maximal semisimple quotient of $\vTh(\sig)$. 
Then $\tht(\sig)$ is irreducible whenever $\vTh(\sig)$ is nonzero, and the map $\sig\mapsto\tht(\sig)$ is injective on its domain by the extended Howe conjecture for similitudes, which follows from the Howe conjecture for isometry groups. 
The following lemma relates the theta correspondence for isometries and similitudes. 

\begin{lemma}\label{lem:61}
\begin{enumerate}
\renewcommand\labelenumi{(\theenumi)}
\item\label{lem:611} Let $\sig$ be an irreducible representation of a similitude group and $\tau$ a constituent of the restriction of $\sig$ to the relevant isometry group. 
If $\tht^\psi(\tau)$ is nonzero, then $\tht(\sig)$ is nonzero.
\item\label{lem:612} If $\Hom_R(\ome_{\calv_n^\eps},\sig_1\boxtimes\sig_2)\neq 0$, then there is a bijection $f$ between irreducible summands of $\sig_1|_{\O(\calv^\eps_n)}$ and irreducible summands of $\sig_2|_{Sp(W_n)}$ such that for any irreducible summand $\tau_i$ in the restriction of $\sig_i$ to the relevant isometry group 
\[\tau_2=f(\tau_1)\Leftrightarrow\Hom_{\O(\calv_n^\eps)\times Sp(W_k)}(\ome^\psi_{\calv_n^\eps},\tau_1\boxtimes\tau_2)\neq 0. \]
\end{enumerate} 
\end{lemma}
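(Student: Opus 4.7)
My plan for both parts starts from the Frobenius reciprocity
\[
\Hom_R(\ome_{\calv_n^\eps},\sig_1\boxtimes\sig_2)=\Hom_{R_0}(\ome^\psi_{\calv_n^\eps},(\sig_1\boxtimes\sig_2)|_{R_0})
\]
afforded by the compact induction $\ome_{\calv_n^\eps}=\cind_{R_0}^R\ome^\psi_{\calv_n^\eps}$, together with the observation that $R_0$ contains $\O(\calv_n^\eps)\times Sp(W_k)$ as a normal subgroup whose quotient is the abelian group $F^\times$ (realized via the common similitude factor). Clifford--Mackey theory then bridges the isometric theta correspondence and the similitude one.

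For part (\ref{lem:611}), suppose without loss of generality that $\sig$ is a representation of $\GO(\calv_n^\eps)$ and set $\tau'=\tht^\psi(\tau)$. The hypothesis supplies a nonzero $\O(\calv_n^\eps)\times Sp(W_k)$-equivariant map $\ome^\psi_{\calv_n^\eps}\to\tau\boxtimes\tau'$. Since $\O(\calv_n^\eps)\times Sp(W_k)$ is normal in $R_0$, I would extract from the $R_0$-span of this map an irreducible $R_0$-quotient $\rho$ whose restriction to the isometry subgroup contains $\tau\boxtimes\tau'$. Applying Mackey theory to the extension $R_0\lhd R$ with abelian quotient $F^\times$, induction of $\rho$ from its inertia subgroup produces an irreducible representation of $R$ of the form $\sig'\boxtimes\Sig$, and necessarily $\sig'|_{\O(\calv_n^\eps)}\supset\tau$. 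Since $\sig$ likewise contains $\tau$ in its restriction, $\sig$ and $\sig'$ must differ by the pullback of a character of $F^\times$ through $\lam_n^\eps$; the defining relation $\lam_n^\eps(h)\lam_k(g)=1$ of $R_0$ then allows me to absorb this twist into $\Sig$ by a matching character of $\GSp(W_k)$, yielding an irreducible $\Sig$ with $\Hom_R(\ome_{\calv_n^\eps},\sig\boxtimes\Sig)\neq 0$ and hence $\tht(\sig)\neq 0$. The main technical point I expect is tracking the central twists so that the final $\Sig$ is genuinely irreducible, but this is a standard check for the extension $1\to R_0\to R\to F^\times\to 1$.

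For part (\ref{lem:612}), restricting the given nonzero map further from $R_0$ down to $\O(\calv_n^\eps)\times Sp(W_k)$ gives
\[
\Hom_{\O(\calv_n^\eps)\times Sp(W_k)}(\ome^\psi_{\calv_n^\eps},(\sig_1|_{\O(\calv_n^\eps)})\boxtimes(\sig_2|_{Sp(W_k)}))\neq 0.
\]
Decomposing $\sig_i$ restricted to the relevant isometry group into finitely many irreducible summands (possible because $\GO(\calv_n^\eps)/\O(\calv_n^\eps)$ and $\GSp(W_k)/Sp(W_k)$ act by conjugation on these summands and each $\sig_i$ is irreducible), the displayed Hom space becomes a direct sum over pairs of summands, one from each $\sig_i$. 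Howe duality for isometries \cite{H,GT2} asserts that each summand of $\sig_1|_{\O(\calv_n^\eps)}$ pairs with at most one summand of $\sig_2|_{Sp(W_k)}$, while the conjugation action of $R$ permutes these pairs transitively on both sides, furnishing the desired bijection $f$. This part should be essentially bookkeeping once the isometric Howe duality is invoked.
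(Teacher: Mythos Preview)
The paper itself does not supply a proof of Lemma~\ref{lem:61}; it is stated as a known fact relating the isometric and similitude theta correspondences (the standard references being Roberts and Gan--Takeda).  Your outline via Frobenius reciprocity for $\omega_{\calv_n^\eps}=\cind_{R_0}^R\omega^\psi_{\calv_n^\eps}$ together with Clifford--Mackey theory for the tower $\O(\calv_n^\eps)\times Sp(W_k)\lhd R_0\lhd R$ is exactly the standard route, and the details you sketch are correct.

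Two remarks.  For part~(\ref{lem:611}) there is a shorter path: since $(\GO(\calv_n^\eps)\times\{1\})\cdot R_0=R$ with intersection $\O(\calv_n^\eps)\times\{1\}$, Mackey's restriction formula gives $\omega_{\calv_n^\eps}|_{\GO(\calv_n^\eps)}\simeq\cind_{\O(\calv_n^\eps)}^{\GO(\calv_n^\eps)}\bigl(\omega^\psi_{\calv_n^\eps}|_{\O(\calv_n^\eps)}\bigr)$, whence Frobenius reciprocity yields
\[
\Hom_{\GO(\calv_n^\eps)}(\omega_{\calv_n^\eps},\sig)\simeq\Hom_{\O(\calv_n^\eps)}(\omega^\psi_{\calv_n^\eps},\sig|_{\O(\calv_n^\eps)})\supset\Hom_{\O(\calv_n^\eps)}(\omega^\psi_{\calv_n^\eps},\tau)\neq 0,
\]
so $\Tht(\sig)\neq 0$ directly, without having to build an explicit $\Sig$ and chase the character twists.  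Your longer argument is fine too; the twist--absorption step works because $\omega_{\calv_n^\eps}\otimes\nu\simeq\omega_{\calv_n^\eps}$ for any character $\nu$ of $R/R_0$, and because every character of the stabilizer $G_\tau/\O(\calv_n^\eps)\subset F^\times$ extends to $F^\times$.

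For part~(\ref{lem:612}) your bookkeeping is correct.  The one point worth spelling out is why the partially defined map $\tau_1\mapsto\tau_2$ is everywhere defined and onto: the projection $R_0\to\GO(\calv_n^\eps)$ is surjective (as $\lam_k$ is surjective onto $F^\times$), so conjugation by $R_0$ acts transitively on the summands of $\sig_1|_{\O(\calv_n^\eps)}$; since the image of the $R_0$-map $\omega^\psi_{\calv_n^\eps}\to(\sig_1\boxtimes\sig_2)|_{R_0}$ is $R_0$-stable, every $\tau_1$ occurs in some pair, and symmetrically every $\tau_2$ does.  Injectivity in both directions is then exactly Howe duality for the isometry pair.
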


We work over a number field $F$ in the following proposition. 

\begin{proposition}[\cite{Y3}]\label{prop:62}
\begin{enumerate}
\renewcommand\labelenumi{(\theenumi)}
\item\label{prop:621} Let $\vPi$ be an irreducible cuspidal automorphic representation of $Sp_n(\AA)$ such that $L(s,\vPi)$ is holomorphic for $\Re s>1$ and has a pole at $s=1$. 
Then there is a unique equivalence class of quadratic spaces $\calv$ of dimension $2n$ and discriminant $1$ such that $\tht^\psi_\calv(\vPi)$ is nonzero. 
Moreover, $\tht^\psi_\calv(\vPi)$ is cuspidal and the multiplicity of $\tht^\psi_\calv(\vPi)$ in $\tht^\psi_\calv(\scra_\cusp(Sp_n))$ is equal to $\rmm_\cusp(\vPi)$.  
\item\label{prop:622} Let $\calv$ be a quadratic space of dimension $2n$ and discriminant $1$ and $\sig$ an irreducible cuspidal automorphic representation of $\O(\calv,\AA)$ such that $L(s,\sig)$ is entire and $L(1,\sig)\neq 0$. 
Then $\tht^\psi_n(\sig)$ is cuspidal. 
Moreover, $\tht^\psi_n(\sig)$ is nonzero if and only if $\tht^{\psi_v}_n(\sig_v)$ is nonzero for all $v$. 
The multiplicity of $\tht^\psi_n(\sig)$ in $\tht^\psi_n(\scra_\cusp(\O(\calv)))$ is equal to $\rmm_\cusp(\sig)$. 
\end{enumerate}
\end{proposition}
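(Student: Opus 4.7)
The plan is to adapt the Rallis-tower argument underlying Proposition~\ref{prop:22} to the isometry dual pair $Sp(W_n)\times\O(\calv)$ with $\dim\calv=2n$. The core tools are (i) the tower property of theta correspondence, (ii) the doubling method of Piatetski-Shapiro--Rallis, which expresses the Rallis inner product formula in terms of the standard $L$-function, and (iii) the local Howe conjecture for isometry groups together with the conservation relation of Proposition~\ref{prop:61}.

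For~(\ref{prop:621}), I would first invoke the analogue of Lemma~10.2 of \cite{Y3} in the symplectic/even-orthogonal setting: the holomorphy of $L(s,\vPi)$ for $\Re s>1$ forces $\tht^\psi_U(\vPi)=0$ for every quadratic space $U$ of discriminant~$1$ and dimension strictly less than~$2n$, so by the tower property any nontrivial lift at dimension~$2n$ is automatically cuspidal. Next, the simple pole of $L(s,\vPi)$ at $s=1$ is detected by a regularised Rallis/Siegel--Weil inner product formula: its residue equals, up to nonzero local factors, an inner product of theta lifts into some quadratic space $\calv$ of dimension~$2n$ and discriminant~$1$. Nontriviality of the residue produces $\calv$ with $\tht^\psi_\calv(\vPi)\neq 0$; uniqueness of the equivalence class of $\calv$ follows from local dichotomy (Proposition~\ref{prop:61}) together with the Hasse--Minkowski product formula. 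The multiplicity identity then follows, exactly as in the proof of Proposition~\ref{prop:22}, from \cite[Proposition 2.14]{G} once one observes that the nonvanishing of $\tht^\psi_\calv(\vPi)$ depends only on the abstract representation $\vPi$, not on its embedding into $\scra_\cusp(Sp_n)$.

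For~(\ref{prop:622}) the argument is symmetric. The assumed entireness of $L(s,\sig)$ kills every lower rung of the metaplectic Witt tower, so by the tower property $\tht^\psi_n(\sig)$ is cuspidal as soon as it is nonzero. The Rallis inner product formula now expresses $\langle\tht^\psi_n(f_1,\phi_1),\tht^\psi_n(f_2,\phi_2)\rangle$ as the Petersson pairing of $f_1,f_2$ multiplied by $L(1,\sig)$ times a product of local doubling zeta integrals; each such local factor is nonzero precisely when $\tht^{\psi_v}_n(\sig_v)\neq 0$. Since $L(1,\sig)\neq 0$ by hypothesis, the global lift is nonzero if and only if every local lift is, which is the desired criterion. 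The multiplicity equality is again derived from \cite[Proposition 2.14]{G}.

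The hard part will be the rigorous handling of the boundary-value Rallis inner product formula at $s=1$: the doubling integral diverges there and has to be regularised in the spirit of Kudla--Rallis, and one must identify its residue cleanly as a product of local factors in order to pin down the local invariants of $\calv$ and read off the dichotomy statement. A closely related difficulty is verifying nonvanishing of the archimedean local doubling integrals, where local Howe duality for even orthogonal groups is more subtle than in the odd case treated in Proposition~\ref{prop:22}. Once these local analyses are in place, the remainder of the argument is, as in Proposition~\ref{prop:22}, a formal global-to-local factorisation combined with the extended Howe duality for isometry groups.
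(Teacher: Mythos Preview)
Your proposal is correct and follows essentially the same approach as the paper. The paper's proof is a one-line reduction to \cite[Theorem~10.1, Lemma~10.2]{Y3} and \cite[Proposition~2.14]{G}, exactly paralleling the proof of Proposition~\ref{prop:22}; what you have written is an accurate unpacking of the mechanisms (tower property, regularised Rallis inner product at the boundary point, local dichotomy, and the multiplicity-preservation argument) that those cited results encapsulate.
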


\begin{proof}
As in the proof of Proposition \ref{prop:22}, one can deduce these results from \cite[Theorem 10.1, Lemma 10.2]{Y3} and \cite[Proposition 2.14]{G}. 
\end{proof}

\section{Saito-Kurokawa representations}\label{sec:7}

Let $F$ be a $\frkp$-adic field. 
We begin by recalling the definition and construction of the Saito-Kurokawa $A$-packet $\SK_2(\pi)$ on $\PGSp_2$ associated to an irreducible admissible unitary generic representation $\pi$ of $\PGL_2(F)$. 
Recall the quadratic space $V_2^+=V^+_1\oplus\scrh_1$ of dimension $5$ and the isomorphism $\PGSp_2\simeq\SO(V^+_2)$. 
We consider the theta correspondence for the dual pair $\Mp(W_1)\times\PGSp_2$ to construct $\SK_2(\pi)$, which is independent of $\psi$, by 
\begin{align*}
\SK_2(\pi)&=\{\SK_2^+(\pi),\;\SK_2^-(\pi)\}, & 
\SK_2^\pm(\pi)&=\tht^\psi_{V^+_2}(\rmMp^\psi_1(\pi^\pm)), & 
\end{align*}

Schmidt \cite{Sc} gives another way of constructing the packet $\SK_2(\pi)$. 
We employ the symbols $D_+$ and $D_-$ defined in Section \ref{sec:3}, which are the split and division quaternion algebras over $F$. 
Denote the reduced norm of $D_\vep$ by $\mathrm{N}_\vep$. 
Then $\calv_2^\vep\simeq (D_\vep,\mathrm{N}_\vep)$. 
We have an isomorphism
\[\GSO(\calv^\vep_2)\simeq D_\vep^\times\times D_\vep^\times/\{(z,z^{-1})\;|\;z\in F^\times\}, \]
via the action of the latter on $D_\vep$ given by $(\alp,\bet)\mapsto \alp x\bar\bet$. 
Moreover, an element of $\GO(\calv^\vep_2)$ of determinant $-1$ is given by the conjugation action $c:x\mapsto\bar x$ on $D_\vep$. 

An irreducible representation of $\GSO(\calv^\vep_2)$ is of the form $\sig_1\boxtimes\sig_2$, where $\sig_i$ are representations of $D_\vep^\times$ with equal central character. 
Moreover, the action of $c$ on representations of $\GSO(\calv^\vep_2)$ is given by $\sig_1\boxtimes\sig_2\mapsto\sig_2\boxtimes\sig_1$.
Given a representation $\pi$ of $\PGL_2(F)$, let 
\[\rmGO^\vep_2(\pi)=\ind^{\GO(\calv_2^\vep)}_{\GSO(\calv_2^\vep)}\pi^\vep\boxtimes\1_{D_\vep^\times}. \]
The representation $\rmGO^\vep_2(\pi)$ is irreducible unless $\vep=-$ and $\pi\simeq\St$. 

\begin{proposition}[Schmidt \cite{Sc}]\label{prop:71} 
\begin{enumerate}
\renewcommand\labelenumi{(\theenumi)}
\item\label{prop:711} $\SK_2^\vep(\pi)=\tht(\rmGO^\vep_2(\pi))$ unless $\vep=-$ and $\pi\simeq\St$. 
\item\label{prop:712} $\SK_2^-(\St)=\tht(\1_{\GO(\calv_2^-)})$. 
\end{enumerate}
\end{proposition}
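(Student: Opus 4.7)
The plan is to compare Piatetski--Shapiro's construction with Schmidt's by identifying both as outputs of the same iterated theta correspondence, bridged by a see-saw identity. Recall from Proposition \ref{prop:41} that $\rmMp_1^\psi(\pi^\vep) \simeq \tht^\psi_{V_1^\vep}(\hat\pi^\vep)$, so Piatetski--Shapiro's definition of $\SK_2^\vep(\pi)$ exhibits it as an iterated theta lift
\[\SK_2^\vep(\pi) \;=\; \tht^\psi_{V_2^+}\bigl(\tht^\psi_{V_1^\vep}(\hat\pi^\vep)\bigr).\]
On the Schmidt side, $\rmGO_2^\vep(\pi)|_{\GSO(\calv_2^\vep)} = \pi^\vep \boxtimes \1$ under $\GSO(\calv_2^\vep) \simeq (D_\vep^\times \times D_\vep^\times)/F^\times$, and the task is to show that its theta lift to $\GSp_2$ matches the iterated lift above.

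For (i), I would proceed as follows. First, use Lemma \ref{lem:61}(\ref{lem:611}) to reduce nonvanishing of $\vTh(\rmGO_2^\vep(\pi))$ to nonvanishing of the isometry theta lift of an irreducible summand of $\rmGO_2^\vep(\pi)|_{\O(\calv_2^\vep)}$, which is supplied by the fact that $\SK_2^\vep(\pi)|_{Sp_2}$ occurs on the dual side. Second, set up a see-saw based on an orthogonal decomposition $\calv_2^\vep = V_1^\vep \oplus \langle a \rangle$ for a suitable $a \in F^\times$, so that $\O(\calv_2^\vep) \supset \O(V_1^\vep) \times \O(\langle a \rangle)$; the see-saw identity then rewrites $\Hom_{\O(\calv_2^\vep)}(\ome^\psi_{\calv_2^\vep}, \sigma)$ in terms of Hom-spaces for the pair $(\O(V_1^\vep), \Mp(W_1))$, producing the factor $\tht^\psi_{V_1^\vep}(\hat\pi^\vep) = \rmMp_1^\psi(\pi^\vep)$ that matches Piatetski--Shapiro's inner lift. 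Third, match irreducible summands and central characters via Lemma \ref{lem:61}(\ref{lem:612}) to identify $\tht(\rmGO_2^\vep(\pi))$ with $\SK_2^\vep(\pi)$ as $\GSp_2$-representations.

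For (ii), since $\St^- \simeq \1_{D_-^\times}$ the inducing datum reduces to $\1 \boxtimes \1$, and $\rmGO_2^-(\St) = \ind_{\GSO(\calv_2^-)}^{\GO(\calv_2^-)}(\1 \boxtimes \1)$ is reducible, splitting as $\1_{\GO(\calv_2^-)} \oplus \sgn_{\GO(\calv_2^-)}$. The conservation relation (Proposition \ref{prop:61}) forces $\bfn(\1_{\GO(\calv_2^-)}) + \bfn(\sgn_{\GO(\calv_2^-)}) = 4$, and a Siegel--Weil type identity identifies $\tht(\1_{\GO(\calv_2^-)})$ with $\SK_2^-(\St)$, while $\sgn_{\GO(\calv_2^-)}$ lifts first into a smaller-rank tower. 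The main obstacle throughout is the passage between isometry and similitude correspondences, where one must track central characters and the outer involution $c$ of $\GO(\calv_2^\vep)$ carefully; in (ii) the additional difficulty is isolating $\1_{\GO(\calv_2^-)}$ rather than $\sgn_{\GO(\calv_2^-)}$ as the summand whose lift is $\SK_2^-(\St)$, which typically requires either an explicit matrix-coefficient computation or a Jacquet-module analysis specialized to the Steinberg case.
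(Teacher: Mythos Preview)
The paper does not supply its own proof of Proposition \ref{prop:71}; the result is simply quoted from Schmidt \cite{Sc} (with the surrounding discussion drawing on \cite{G,GG}), so there is no in-paper argument to compare your proposal against.

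That said, your proposed see-saw has a genuine gap. Decomposing $\calv_2^\vep = V_1^\vep \oplus \langle a \rangle$ and restricting the Weil representation of $Sp_2 \times \O(\calv_2^\vep)$ yields $\ome^\psi_{V_1^\vep} \otimes \ome^\psi_{\langle a \rangle}$ as a representation of $\Mp(W_2) \times \O(V_1^\vep) \times \O(\langle a \rangle)$, with $\Mp(W_2)$ acting diagonally. The Howe lifts visible in this see-saw therefore go from $\O(V_1^\vep)$ to $\Mp(W_2)$, not to $\Mp(W_1)$; the factor $\rmMp_1^\psi(\pi^\vep)=\tht^\psi_1(\hat\pi^\vep)$ that underlies Piatetski-Shapiro's definition simply does not appear. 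No single see-saw of this shape bridges the dual pair $\Mp(W_1)\times\O(V_2^+)$ (five-dimensional orthogonal side, rank-one symplectic side) with $\GSp_2\times\GO(\calv_2^\vep)$ (four-dimensional orthogonal side, rank-two symplectic side).

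The route taken in \cite{Sc,G} is instead to compute both theta lifts directly as representations of $\PGSp_2$ and match them via their Langlands data: for $\vep=+$ one shows that each lift is the unique irreducible quotient of $\Ind^{Sp_2}_{P_2^2}(\pi\otimes\alp^{1/2})$ (this is exactly Lemma \ref{lem:72}(\ref{lem:721}) on the $\Mp(W_1)$ side, and a parallel Kudla-type induction-principle computation on the $\GO(\calv_2^+)$ side); for $\vep=-$ with $\pi$ discrete but $\pi\not\simeq\St$, both lifts are supercuspidal and are pinned down by their $L$-parameters. In case (ii) your identification of the reducibility $\rmGO_2^-(\St)=\1\oplus\sgn$ is correct, but the conservation relation alone does not single out which summand lifts to $\SK_2^-(\St)$; in \cite{G,GG} this is settled by the explicit identification $\tht^\psi_2(\1_{\O(\calv_2^-)})\simeq\St_2$ recorded here as Lemma \ref{lem:72}(\ref{lem:722}), together with $\bfn(\sgn_{\O(\calv_2^-)})=4$ as you note.
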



We associate to $\mu\in\Ome(F^\times)$ the degenerate principal series representation 
\[I_n(\mu)=\Ind^{Sp_n}_{P_n^n}\mu\circ\det. \]
The following results are included in the paper \cite{KR} by Kudla and Rallis. 

\begin{lemma}[\cite{KR}]\label{lem:71}
Assume that $n$ is even. 
Let $\mu\in\Ome(F^\times)$. 
\begin{enumerate}
\renewcommand\labelenumi{(\theenumi)}
\item\label{lem:711} If $-\frac{1}{2}<\Re\mu<\frac{1}{2}$, then $I_n(\mu)$ is irreducible. 
\item\label{lem:712} If $\chi^2=1$, then $I_n(\chi\alp^{1/2})$ has a unique irreducible subrepresentation, which we denote by $\St_n(\chi)$. 
\end{enumerate}
\end{lemma}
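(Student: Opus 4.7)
The plan is to extract both assertions from the analysis of the Siegel degenerate principal series for $Sp_n$ developed by Kudla and Rallis in \cite{KR}. The two key tools are the standard intertwining operator $M(\mu):I_n(\mu)\to I_n(\mu^{-1})$, defined by meromorphic continuation of an integral over the opposite Siegel radical, and the Jacquet module filtration of $I_n(\mu)$ along $P_n^n$, which in this degenerate setting has only two nontrivial $\GL_n(F)$-graded pieces.

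For (\ref{lem:711}), after normalization by the appropriate local $L$-factors, $M(\mu)$ is holomorphic and nonvanishing in the strip $-\frac{1}{2}<\Re\mu<\frac{1}{2}$, since Kudla and Rallis show that all reducibility points of $I_n(\mu)$ lie on the boundary $\Re\mu=\frac{1}{2}$ or further out (at $\mu=\chi\alp^s$ with $\chi^2=1$ and $s\in\frac{1}{2}+\ZZ_{\geq 0}$, together with analogous integral points). Hence $M(\mu)$ furnishes an isomorphism $I_n(\mu)\simeq I_n(\mu^{-1})$, and combined with the two-step Jacquet filtration this forces $I_n(\mu)$ to be irreducible.

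For (\ref{lem:712}), at $\mu=\chi\alp^{1/2}$ with $\chi^2=1$ the normalized operator acquires a zero and $I_n(\chi\alp^{1/2})$ becomes reducible with exactly two inequivalent composition factors. The unique irreducible subrepresentation is realized as the image of the $Sp_n$-equivariant Siegel-Weil map
\[\ome^\psi_V\longrightarrow I_n(\chi\alp^{1/2}),\qquad \phi\longmapsto\bigl(g\mapsto(\ome^\psi_V(g)\phi)(0)\bigr),\]
where $V$ is a quadratic space of dimension $n+1$ with quadratic character $\chi$. Its irreducibility follows from the Howe correspondence for the dual pair $(Sp_n,\O(V))$, and we designate this submodule $\St_n(\chi)$.

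The main obstacle will be confirming that the theta image is the socle (rather than the cosocle) of $I_n(\chi\alp^{1/2})$. Kudla and Rallis resolve this by exhibiting the Langlands quotient through the theta lift from the complementary member of the Witt tower---a quadratic space of dimension $n+3$ with the same character $\chi$---thereby distinguishing the two composition factors and confirming that the theta image from the smaller space is indeed a subrepresentation.
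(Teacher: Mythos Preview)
The paper itself supplies no argument for this lemma: it is stated with the attribution \cite{KR} and the surrounding text simply says ``The following results are included in the paper \cite{KR} by Kudla and Rallis.'' So there is nothing to compare your sketch against in the paper proper; one can only ask whether your outline faithfully reflects what is in \cite{KR}.

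On that score, the broad architecture---intertwining operators for (\ref{lem:711}), theta embeddings into the degenerate series for (\ref{lem:712})---is indeed the Kudla--Rallis strategy, but two concrete details in your write-up are wrong. First, the Jacquet module of $I_n(\mu)$ along $P_n^n$ does not have a two-step filtration: the geometric lemma gives one graded piece for each double coset in $P_n^n\backslash Sp_n/P_n^n$, and there are $n+1$ of these (indexed by the relative position of two Lagrangians). So the sentence ``combined with the two-step Jacquet filtration this forces $I_n(\mu)$ to be irreducible'' is not a valid inference; Kudla--Rallis argue more carefully via the full semisimplified Jacquet module together with the normalized intertwining operator. Second, the quadratic space $V$ you invoke for (\ref{lem:712}) cannot have dimension $n+1$: since $n$ is even, an odd-dimensional $V$ would pair with the metaplectic cover rather than with $Sp_n$. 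The Siegel--Weil section $\phi\mapsto(\ome^\psi_V(\cdot)\phi)(0)$ lands in $I_n(\chi_V\alp^{(m-n-1)/2})$ for $m=\dim V$, so hitting $\chi\alp^{1/2}$ requires $m=n+2$; compare Lemma~\ref{lem:72}(\ref{lem:722}), where $\St_2$ is realized as $\tht^\psi_2(\1_{\O(\calv_2^-)})$ with $\dim\calv_2^-=4$. The two constituents at this point are the images $R_n(V^\pm)$ coming from the two inequivalent spaces of dimension $n+2$ with discriminant character $\chi$, not from spaces of dimensions $n+1$ and $n+3$.
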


When $\chi$ is the trivial character of $F^\times$, we shall write $\St_n=\St_n(\chi)$. 

\begin{lemma}[\cite{G,GG}]\label{lem:72}
\begin{enumerate}
\renewcommand\labelenumi{(\theenumi)}
\item\label{lem:721} $\SK_2^+(\pi)|_{Sp_2}$ is equivalent to the Langlands quotient of the standard module $\Ind^{Sp_2}_{P_2^2}(\pi\otimes\alp^{1/2})$. 
\item\label{lem:722} $\SK_2^-(\St)|_{Sp_2}\simeq\St_2\simeq\tht^\psi_2(\1_{\O(\calv_2^-)})$ is tempered. 
\item\label{lem:723} If $\pi$ is supercuspidal or $\pi\simeq\St\otimes\chi$ with $\chi$ a nontrivial quadratic character of $F^\times$, then $\SK_2^-(\pi)$ is supercuspidal. 
\end{enumerate}
\end{lemma}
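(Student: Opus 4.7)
The plan is to treat each part by using Proposition \ref{prop:41} to identify $\rmMp_1^\psi(\pi^\pm)$ explicitly, then computing its theta lift to $\SO(V_2^+)\simeq\PGSp_2$, and finally reading off the $Sp_2$-structure. The computations in each case are standard theta-lift arguments, but they need to be assembled for the specific representations at hand.

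For (\ref{lem:721}), first take $\pi\simeq I(\mu,\mu^{-1})$ with $-\tfrac{1}{2}<\Re\mu<\tfrac{1}{2}$, so that $\rmMp_1^\psi(\pi^+)\simeq I_1^\psi(\mu)$ by Proposition \ref{prop:41}(\ref{prop:411}). The theta lift of this degenerate principal series from $\Mp(W_1)$ to $\SO(V_2^+)$ can be evaluated by a see-saw/induction-in-stages argument: the Siegel-parabolic Jacquet module of $\ome^\psi_{V_2^+}$ has a short Kudla filtration whose graded pieces involve theta lifts to smaller $\Mp(W_i)$, and tracing through yields a degenerate principal series of $\PGSp_2$ whose $Sp_2$-restriction is $\Ind^{Sp_2}_{P_2^2}(\pi\otimes\alp^{1/2})$. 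Since $\SK_2^+(\pi)=\tht^\psi_{V_2^+}(\rmMp_1^\psi(\pi^+))$ is irreducible (Lemma \ref{lem:61}), it must coincide with the Langlands quotient. The cases $\pi\simeq\St\otimes\hat\chi^\eta$ are handled in the same way after substituting the identifications of Proposition \ref{prop:41}(\ref{prop:412}), (\ref{prop:413}) for $\rmMp_1^\psi(\pi^+)$.

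For (\ref{lem:722}), combine Proposition \ref{prop:71}(\ref{prop:712}) with Lemma \ref{lem:61}(\ref{lem:612}) to reduce to computing $\tht^\psi_2(\1_{\O(\calv_2^-)})$, the isometric theta lift of the trivial representation of the anisotropic $\O(4)$. The conservation relation (Proposition \ref{prop:61}) forces the first occurrence of $\1_{\O(\calv_2^-)}$ in the symplectic tower to be at $Sp_2$, and the Siegel-Weil analysis of Kudla-Rallis identifies this first-occurrence lift with the unique irreducible submodule of $I_2(\alp^{1/2})$, namely $\St_2$. Temperedness follows because $\St_2$ is the ``big Steinberg'', which is square-integrable.

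For (\ref{lem:723}) it suffices to show that $\rmMp_1^\psi(\pi^-)$ is supercuspidal and that supercuspidality passes through the theta lift to $\SO(V_2^+)$. When $\pi$ is supercuspidal, $\pi^-$ is a finite-dimensional representation of the compact group $\mathrm{P}D_-^\times$; a standard Frobenius-reciprocity argument (theta lifts from compact dual pairs give supercuspidals on the noncompact side) shows $\rmMp_1^\psi(\pi^-)$ is supercuspidal. When $\pi\simeq\St\otimes\hat\chi^\eta$ with $\eta\notin F^{\times 2}$, Proposition \ref{prop:41}(\ref{prop:413}) gives $\rmMp_1^\psi(\pi^-)\simeq\ome_-^{\psi^\eta}$, the odd half of the Weil representation, which is supercuspidal. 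To deduce that $\SK_2^-(\pi)=\tht^\psi_{V_2^+}(\rmMp_1^\psi(\pi^-))$ is supercuspidal, compute the Jacquet module of $\ome^\psi_{V_2^+}$ along each proper parabolic of $\O(V_2^+)$ via Kudla's filtration and check that every graded piece, when paired with $\rmMp_1^\psi(\pi^-)$, vanishes -- each piece factors through a theta lift to $\Mp(W_0)$ or $\Mp(W_1)$ and thus annihilates supercuspidals of $\Mp(W_1)$. The main technical obstacle across the three parts is the bookkeeping in Kudla's filtration (part (\ref{lem:723})) and the precise identification of the specific irreducible submodule $\St_2$ (part (\ref{lem:722})); both have precedent in the references \cite{KR,G,GG} that the statement cites.
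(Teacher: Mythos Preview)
The paper does not prove this lemma at all; it simply cites Proposition~5.5 of \cite{G} and the discussion after Theorem~8.1 of \cite{GG}. Your proposal instead sketches direct theta-lift proofs, which is a genuinely different (and more ambitious) route.

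There is a real error in your argument for (\ref{lem:722}). You claim the conservation relation forces the first occurrence of $\1_{\O(\calv_2^-)}$ in the symplectic tower to be at $Sp_2$, but in fact $\bfn(\1_{\O(\calv_2^-)})=0$: the Weil representation of $\O(\calv_2^-)\times Sp_0$ is the one-dimensional space $\cals((\calv_2^-)^0)=\CC$ with trivial $\O(\calv_2^-)$-action, so $\1$ already occurs there. Conservation then gives $\bfn(\sgn_{\O(\calv_2^-)})=4$, which is precisely what the paper invokes later in the proof of Lemma~\ref{lem:81}. The identification $\tht^\psi_2(\1_{\O(\calv_2^-)})\simeq\St_2$ is correct, but the reason is the Kudla--Rallis embedding of $\ome^\psi_{\calv_2^-}$ into $I_2(\alp^{1/2})$, whose image for the anisotropic form is the irreducible submodule $\St_2$; it is not a first-occurrence phenomenon.

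There is also a gap in (\ref{lem:721}): your case analysis via Proposition~\ref{prop:41} handles principal-series and Steinberg-type $\pi$ but says nothing when $\pi$ is supercuspidal, where $\rmMp_1^\psi(\pi^+)$ has no description in that proposition. A uniform argument---computing the Jacquet module of $\tht^\psi_{V_2^+}(\rmMp_1^\psi(\pi^+))$ along the Siegel parabolic of $\SO(V_2^+)$ directly from Kudla's filtration, without first identifying $\rmMp_1^\psi(\pi^+)$---would cover all cases at once. Your outline for (\ref{lem:723}) is sound.
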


\begin{proof}
These assertions are stated in Proposition 5.5 of \cite{G} (see the paragraph after proof of \cite[Theorem 8.1]{GG}). 
\end{proof}

We assume $n$ to be even for the remainder of this paper. 
Let $\G\scrq_n$ be a parabolic subgroup of $\GSp_n$ with Levi subgroup 
\[\GL_2(F)\times\cdots\times\GL_2(F)\times\GSp_2\] 
and $\G Q_n^\pm$ a parabolic subgroup of $\GO(\calv_n^\pm)$ with Levi subgroup 
\[\GL_2(F)\times\cdots\times\GL_2(F)\times\GO(\calv_2^\pm). \] 
Let $\scrp_n$ (resp. $P_n^+$) denote a parabolic subgroup of $Sp_n$ (resp. $\O(\calv_n^+)$) whose Levi subgroup is isomorphic to $\GL_2(F)\times\cdots\times\GL_2(F)$.  
Put 
\begin{align*}
\scrq_n&=\G\scrq_n\cap Sp_n, & 
Q_n^\pm&=\G Q_n^\pm\cap \O(\calv_n^\pm). 
\end{align*} 

\begin{definition}\label{def:71}
Put $\bet_k=\frac{k^2-4}{8}$. 
We denote the Langlands quotient of 
\[\Ind^{\GSp_n}_{\G\scrq_n}(\pi\otimes\alp^{(n-1)/2})\boxtimes(\pi\otimes\alp^{(n-3)/2})\boxtimes\cdots\boxtimes(\pi\otimes\alp^{3/2})\boxtimes(\SK_2^\pm(\pi)\otimes\alp^{-\bet_n}\circ\lam_2) \] 
by $\SK^\pm_n(\pi)$ and denote the Langlands quotient of  
\[\Ind^{\GO(\calv_n^\pm)}_{\G Q^\pm_n}(\pi\otimes\alp^{(n-1)/2})\boxtimes(\pi\otimes\alp^{(n-3)/2})\boxtimes\cdots\boxtimes(\pi\otimes\alp^{3/2})\boxtimes(\pi^\pm\boxtimes\1)^\star\otimes\alp^{-\bet_n}\circ\lam_2^\pm \]
by $\rmGO_n^\pm(\pi)$. 
Here $\SK^-_n(\pi)$ and $\rmGO_n^-(\pi)$ are interpreted as $0$ if $\pi^-=0$. 
When these representations are restricted to the isometry groups, we write 
\begin{align*}
\rmSp^\pm_n(\pi)&=\SK^\pm_n(\pi)|_{Sp_n}, & 
\rmO_n^\pm(\pi)&=\rmGO_n^\pm(\pi)|_{\O(\calv^\pm_n)}. 
\end{align*}
\end{definition}


\begin{lemma}[cf. Theorem 8.1 of \cite{GG}]\label{lem:73}
The representations $\rmSp^\eps_n(\pi)$ and $\rmO_n^\eps(\pi)$ are irreducible unless $\pi\simeq\St\otimes\chi$, $\chi^2=1$, $\chi\neq 1$ and $\eps=-$. 
\end{lemma}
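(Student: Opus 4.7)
The plan is to reduce the assertion at rank $n$ to the rank-$2$ question about $\SK_2^\eps(\pi)|_{Sp_2}$ and its orthogonal analogue, and then settle the rank-$2$ question using Lemma~\ref{lem:72}, Proposition~\ref{prop:71}, and Lemma~\ref{lem:61}.

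First I would observe that the similitude twists $\alp^{-\bet_n}\circ\lam_2$ and $\alp^{-\bet_n}\circ\lam_2^\pm$ restrict trivially to $Sp_2$ and $\O(\calv_2^\pm)$, respectively. Combined with the decompositions $\GSp_n=Sp_n\cdot\G\scrq_n$ and $\GO(\calv_n^\pm)=\O(\calv_n^\pm)\cdot\G Q_n^\pm$, Mackey's theorem yields that the standard modules defining $\SK_n^\eps(\pi)$ and $\rmGO_n^\eps(\pi)$ restrict to the standard modules on the isometry groups with the terminal piece replaced by its restriction. Since the $\GL_2$-segments $\pi\otimes\alp^{(n-1)/2},\ldots,\pi\otimes\alp^{3/2}$ have strictly decreasing positive exponents, the Langlands quotient of each such standard module with irreducible tempered terminus is unique and irreducible. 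Clifford theory applied to the abelian extension $Sp_n\triangleleft\GSp_n$ gives that the restriction of the irreducible $\GSp_n$-representation $\SK_n^\eps(\pi)$ to $Sp_n$ is multiplicity-free; combining this with the preceding observation, $\SK_n^\eps(\pi)|_{Sp_n}$ is irreducible if and only if the terminal restriction $\SK_2^\eps(\pi)|_{Sp_2}$ is irreducible, and the analogous equivalence holds on the orthogonal side.

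Second, I would settle the rank-$2$ question case by case. For $\eps=+$, Lemma~\ref{lem:72}(\ref{lem:721}) identifies $\SK_2^+(\pi)|_{Sp_2}$ as a Langlands quotient, hence irreducible. For $\eps=-$ with $\pi$ not a discrete series, $\SK_2^-(\pi)=0$ vacuously. For $\eps=-$ and $\pi\simeq\St$, Lemma~\ref{lem:72}(\ref{lem:722}) gives $\SK_2^-(\St)|_{Sp_2}\simeq\St_2$, irreducible. In the remaining cases ($\eps=-$ with $\pi$ supercuspidal or $\pi\simeq\St\otimes\chi$ for $\chi^2=1$, $\chi\neq 1$), Schmidt's identity $\SK_2^-(\pi)=\tht(\rmGO_2^-(\pi))$ from Proposition~\ref{prop:71}(\ref{prop:711}) together with Lemma~\ref{lem:61}(\ref{lem:612}) transfers the question to irreducibility of $\rmGO_2^-(\pi)|_{\O(\calv_2^-)}=\ind^{\O(\calv_2^-)}_{\SO(\calv_2^-)}(\pi^-\boxtimes\1)$. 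By Clifford theory this is equivalent to $\pi^-\boxtimes\1\not\simeq\1\boxtimes\pi^-$ as $\SO(\calv_2^-)$-representations; a direct computation on the subgroup of norm-one elements shows this inequivalence fails precisely when $\pi^-$ is a nontrivial quadratic character of $D_-^\times$, i.e., when $\pi\simeq\St\otimes\chi$ with $\chi^2=1$ and $\chi\neq 1$. Thus $\SK_2^-(\pi)|_{Sp_2}$ is irreducible in the supercuspidal case and reducible in the excluded case.

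The orthogonal statement for $\rmO_n^\eps(\pi)$ follows by the same reduction, with the terminal question becoming irreducibility of $\rmGO_2^\eps(\pi)|_{\O(\calv_2^\eps)}$, which was just analyzed. The main technical obstacle is the bookkeeping in the first paragraph: verifying via a cuspidal-support/Jacquet-module analysis that the standard modules on $Sp_n$ arising from a decomposed terminal piece have pairwise inequivalent Langlands quotients, so that reducibility of the terminal piece forces reducibility upstairs. This relies on rigidity of the $\GL_2$-segments under character twists together with the uniqueness of Langlands data, and while routine it is the most delicate step.
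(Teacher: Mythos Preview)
Your approach coincides with the paper's: both reduce to $n=2$ and invoke the rank-$2$ analysis. The paper's proof is two lines---it says the reduction is routine and cites the proof of Theorem~8.1 of \cite{GG} for the $n=2$ case---whereas you supply a self-contained argument for $n=2$ using Lemma~\ref{lem:72}, Proposition~\ref{prop:71}, Lemma~\ref{lem:61}, and Clifford theory on $\O(\calv_2^-)$. Your direct argument is essentially what is inside \cite{GG}, so the content is the same; you have simply unpacked the citation.

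One small caveat: in your reduction paragraph you phrase the key step as ``Langlands quotient of each such standard module with irreducible \emph{tempered} terminus,'' but for $\eps=+$ the piece $\SK_2^+(\pi)|_{Sp_2}$ is itself a nontempered Langlands quotient (Lemma~\ref{lem:72}(\ref{lem:721})), and $\pi$ may be a complementary series. The fix is the one you allude to at the end---control reducibility of the restriction by counting self-twists of $\SK_n^\eps(\pi)$ under quadratic characters through $\lam_n$, and observe via uniqueness of Langlands data that any such self-twist must already be a self-twist of $\SK_2^\eps(\pi)$. This is exactly the ``readily deduce'' the paper sweeps under the rug, so your level of detail is no less than the original.
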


\begin{proof}
One can readily deduce the general case from the special case in which $n=2$. 
This case is proved in the proof of Theorem 8.1 of \cite{GG}.  
\end{proof}


\begin{lemma}\label{lem:74}
\begin{enumerate}
\renewcommand\labelenumi{(\theenumi)}
\item\label{lem:741} $\rmSp_n^+(\pi)$ is equivalent to the unique irreducible quotient of $\Ind^{Sp_n}_{\scrp_n}(\pi\otimes\alp^{(n-1)/2})\boxtimes(\pi\otimes\alp^{(n-3)/2})\boxtimes\cdots\boxtimes(\pi\otimes\alp^{1/2})$. 
\item\label{lem:742} If $-\frac{1}{2}<\Re\mu<\frac{1}{2}$, then $\rmSp^+_n(I(\mu,\mu^{-1}))\simeq I_n(\mu)$. 
\item\label{lem:743} If $\chi^2=1$ and $\chi\neq 1$, then $\rmSp^+_n(\St\otimes\chi)\simeq\St_n(\chi)$. 
\item\label{lem:744} $\rmSp^-_n(\St)\simeq \St_n$. 
\end{enumerate}
\end{lemma}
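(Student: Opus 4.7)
The common strategy is induction in stages, applied to Lemma~\ref{lem:72}, which identifies the restrictions $\SK_2^\pm(\pi)|_{Sp_2}$ as subquotients of principal series on $Sp_2$. The key identity, arising because $\scrp_n\subset\scrq_n$ with $\scrp_n$ meeting the $Sp_2$-factor of the Levi of $\scrq_n$ in its Siegel parabolic $P_2^2$, is
\[\Ind^{Sp_n}_{\scrp_n}\prod_{j=1}^{n/2}(\pi\otimes\alp^{(n-2j+1)/2})\simeq\Ind^{Sp_n}_{\scrq_n}\bigl[\prod_{j=1}^{(n-2)/2}(\pi\otimes\alp^{(n-2j+1)/2})\boxtimes\Ind^{Sp_2}_{P_2^2}(\pi\otimes\alp^{1/2})\bigr].\]

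For part~(1), restrict the defining standard module of $\SK_n^+(\pi)$ from $\GSp_n$ to $Sp_n$: this realizes $\rmSp_n^+(\pi)$ as the Langlands quotient of $\Ind^{Sp_n}_{\scrq_n}[\cdots]\boxtimes\SK_2^+(\pi)|_{Sp_2}$. By the first part of Lemma~\ref{lem:72}, $\SK_2^+(\pi)|_{Sp_2}$ is the Langlands quotient of $\Ind^{Sp_2}_{P_2^2}(\pi\otimes\alp^{1/2})$, so combining with the displayed identity produces a surjection $\Ind^{Sp_n}_{\scrp_n}\prod_{j=1}^{n/2}(\pi\otimes\alp^{(n-2j+1)/2})\twoheadrightarrow\rmSp_n^+(\pi)$. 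The source is a standard Langlands module (the inducing data $\pi$ is generic unitary and the exponents $(n-1)/2>(n-3)/2>\cdots>1/2$ are positive and strictly decreasing), hence has a unique irreducible quotient; this forces the identification.

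For parts~(2)--(4), I would apply~(1) (or its ``minus'' analogue for~(4), using the second part of Lemma~\ref{lem:72}) and continue induction in stages down to the Borel $B_n$. The resulting $B_n$-characters are $Sp_n$-Weyl-conjugate to those underlying $I_n(\mu)$, $I_n(\chi\alp^{1/2})$, and $I_n(\alp^{1/2})$ respectively. In case~(2), $I_n(\mu)$ is irreducible by Lemma~\ref{lem:71}(\ref{lem:711}): since it shares composition factors with the $\scrp_n$-standard module via a common Borel-induction parent, irreducibility forces $I_n(\mu)\simeq\rmSp_n^+(I(\mu,\mu^{-1}))$. In case~(3), one uses $\St\otimes\chi\hookrightarrow I(\chi\alp^{1/2},\chi\alp^{-1/2})$ to pass to the Borel; $\St_n(\chi)$, the unique irreducible subrepresentation of $I_n(\chi\alp^{1/2})$ by Lemma~\ref{lem:71}(\ref{lem:712}), must then be identified with the Langlands quotient of the $\scrp_n$-standard module via a Jacquet-module computation. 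In case~(4), $\SK_2^-(\St)|_{Sp_2}\simeq\St_2$ is tempered, placing the $\Ind^{Sp_n}_{\scrq_n}$-module already in standard Langlands form, and an analogous matching produces $\St_n$.

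The principal obstacle lies in cases~(3) and~(4): the targets $\St_n(\chi)$ and $\St_n$ are defined as unique irreducible \emph{subrepresentations} of degenerate principal series, whereas $\rmSp_n^\pm(\pi)$ arises as a Langlands \emph{quotient} of the standard module from part~(1). Bridging these two descriptions for Steinberg-type representations is the technical core, and requires either explicit Jacquet-module calculations or an appeal to the Aubert--Zelevinsky involution to identify the socle of one induced representation with the cosocle of a conjugate one.
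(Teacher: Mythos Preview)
Your argument for part~(1) is essentially the paper's: both use Lemma~\ref{lem:72}(\ref{lem:721}) and induction in stages through $\scrq_n$ to collapse the $Sp_2$-block, arriving at the $\scrp_n$-standard module.

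For part~(2) your route differs from the paper's. The paper simply cites \S 6.2 of \cite{IY}. Your argument---that the $\scrp_n$-standard module and $I_n(\mu)$ arise, via induction in stages, from Weyl-conjugate characters of the Borel and hence share composition series, so irreducibility of $I_n(\mu)$ forces the $\scrp_n$-module to be irreducible too and thus isomorphic to $I_n(\mu)$---is more self-contained and perfectly valid. (You should phrase it this way rather than ``shares composition factors'': the point is that conjugate full principal series have the \emph{same} Jordan--H\"older multiset, so if one is irreducible the other is as well.)

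For parts~(3) and~(4) you have correctly located the difficulty but not resolved it, and the paper's proof shows that you need not resolve it yourself. The paper invokes Jantzen's memoir \cite{J}: Proposition~3.11(2) there directly identifies $\St_n(\chi)$ (for $\chi^2=1$, $\chi\neq 1$) as the Langlands quotient of your $\scrp_n$-standard module with $\pi=\St\otimes\chi$, giving~(3); and Proposition~3.10(2) identifies $\St_n$ as the Langlands quotient of
\[\Ind^{Sp_n}_{\scrq_n}(\St\otimes\alp^{(n-1)/2})\boxtimes\cdots\boxtimes(\St\otimes\alp^{3/2})\boxtimes\St_2,\]
which together with Lemma~\ref{lem:72}(\ref{lem:722}) gives~(4) immediately. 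So the Jacquet-module or Aubert-involution computation you anticipate is exactly what Jantzen has carried out; rather than reproving it, cite \cite{J}.
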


\begin{proof}
Lemma \ref{lem:72}(\ref{lem:721}) implies (\ref{lem:741}). 
Consequently, (\ref{lem:742}) can be seen from \S 6.2 of \cite{IY}.
We obtain (\ref{lem:743}) as a special case of \cite[Proposition 3.11(2)]{J}.  
Proposition 3.10(2) of \cite{J} tells us that $\St_n$ is equivalent to the Langlands quotient of 
\[\Ind^{Sp_n}_{\scrq_n}(\St\otimes\alp^{(n-1)/2})\boxtimes(\St\otimes\alp^{(n-3)/2})\boxtimes\cdots\boxtimes(\St\otimes\alp^{3/2})\boxtimes\St_2. \]
This combined with Lemma \ref{lem:72}(\ref{lem:722}) proves (\ref{lem:744}). 
\end{proof}

Given a quadratic character $\chi:F^\times\to\mu_2$,  we denote the unique irreducible quotient of the standard module 
\[\Ind^{\O(\calv_n^-)}_{Q^-_n}(\St\otimes\chi\alp^{(n-1)/2})\boxtimes(\St\otimes\chi\alp^{(n-3)/2})\boxtimes\cdots\boxtimes(\St\otimes\chi\alp^{3/2})\boxtimes\chi\circ x^-_2 \]
by $\Xi_n^\chi$. 
When $\chi$ is trivial, we will write $\Xi_n=\Xi_n^\chi$. 
Let $Q_n^n$ denote the parabolic subgroup of $\O(\calv_n^+)$ stabilizing a maximal totally isotropic subspace of $\calv_n^+$ and define the degenerate principal series representation $J_n(\mu)=\Ind^{\O(\calv_n^+)}_{Q_n^n}\mu$.

\begin{lemma}\label{lem:75}
\begin{enumerate}
\renewcommand\labelenumi{(\theenumi)}
\item\label{lem:751} $\rmO_n^+(\pi)$ is equivalent to the unique irreducible quotient of $\Ind^{\O(\calv_n^+)}_{P^+_n}(\pi\otimes\alp^{(n-1)/2})\boxtimes(\pi\otimes\alp^{(n-3)/2})\boxtimes\cdots\boxtimes(\pi\otimes\alp^{1/2})$. 
\item\label{lem:752} $\rmO_n^+(\pi)\simeq\rmO_n^+(\pi)\otimes\sgn_{\O(\calv_n^+)}$. 
\item\label{lem:753} If $\chi^2=1$, then $\rmO_n^+(\pi)\otimes\chi\circ x^+_n\simeq\rmO_n^+(\pi\otimes\chi)$ and $\rmO_n^-(\St)\otimes\chi\circ x^-_n\simeq\Xi_n^\chi$. 
\item\label{lem:754} If $\chi^2=1$ and $\chi\neq 1$, then $\rmO_n^-(\St\otimes\chi)\simeq\Xi_n^\chi\oplus\Xi_n^\chi\otimes\sgn_{\O(\calv_n^-)}$. 
\item\label{lem:755} If $-\frac{1}{2}<\Re\mu<\frac{1}{2}$, then $\O_n^+(I(\mu,\mu^{-1}))\simeq J_n(\mu)$. 
\item\label{lem:756} If $\chi^2=1$, then $\O_n^+(\St\otimes\chi)$ is equivalent to the unique irreducible subrepresentation of $J_n(\chi\alp^{1/2})$. 
\end{enumerate}
\end{lemma}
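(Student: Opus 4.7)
The proof parallels Lemma~\ref{lem:74}, with Schmidt's construction (Proposition~\ref{prop:71}) $\rmGO_2^\pm(\pi)=\ind^{\GO(\calv_2^\pm)}_{\GSO(\calv_2^\pm)}\pi^\pm\boxtimes\1$ and the accidental isomorphism $\GSO(\calv_2^+)/F^\times\simeq\PGL_2(F)\times\PGL_2(F)$ playing the role of Lemma~\ref{lem:72}. The whole lemma reduces to the case $n=2$ by induction in stages: upon restricting the defining induction of Definition~\ref{def:71} from $\GO(\calv_n^\pm)$ to $\O(\calv_n^\pm)$, the similitude-factor twist $\alp^{-\bet_n}\circ\lam_2^\pm$ disappears, leaving a standard module on $\O(\calv_n^\pm)$ whose cuspidal datum on the $\O(\calv_2^\pm)$-factor is $\rmO_2^\pm(\pi)$; any structural assertion about $\rmO_n^\pm(\pi)$ therefore propagates from the corresponding assertion about $\rmO_2^\pm(\pi)$.

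For (1), (5), and (6), the $n=2$ input is the identification of $\rmO_2^+(\pi)=\rmGO_2^+(\pi)|_{\O(\calv_2^+)}$ with the unique irreducible quotient of $\Ind^{\O(\calv_2^+)}_{P_2^+}\pi\otimes\alp^{1/2}$. This rests on Mackey theory (the restriction is irreducible since $\rmGO_2^+(\pi)$ is irreducible with central character trivial on $F^\times\subset\GO(\calv_2^+)$) together with the alignment of $P_2^+\cap\SO(\calv_2^+)$ with a product involving a Borel of $\PGL_2$ under the accidental isomorphism. Once (1) is available for $n=2$, induction in stages gives (1) for all even $n\geq 2$. Part (5) then follows by transitivity from $P_n^+$ through $Q_n^n$, using that $I(\mu,\mu^{-1})$ is irreducible in the stated range and that the degenerate principal series $J_n(\mu)$ is irreducible for $-\tfrac{1}{2}<\Re\mu<\tfrac{1}{2}$ by the orthogonal analogue of Lemma~\ref{lem:71}(\ref{lem:711}) due to Kudla-Rallis \cite{KR}. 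Part (6) is the analog of Lemma~\ref{lem:74}(\ref{lem:743}): one identifies $\rmO_n^+(\St\otimes\chi)$ with the unique irreducible subrepresentation of $J_n(\chi\alp^{1/2})$ by descending the Steinberg constituent on the Levi $\GL_n$.

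Parts (2)--(4) are twist/Mackey computations. (2) is immediate since $\rmGO_2^+(\pi)=\ind^{\GO}_{\GSO}\pi\boxtimes\1$ is invariant under tensoring by any character of $\GO(\calv_2^+)/\GSO(\calv_2^+)\simeq\mu_2$, and $\sgn_{\O(\calv_n^+)}$ extends to the determinant character of $\GO(\calv_n^+)$, so the invariance commutes with parabolic induction. For (3) the spinor norm $x_n^+$ restricts on the Levi of $\G Q_n^+\cap\O(\calv_n^+)$ to $\chi\circ\det$ on each $\GL_2$-factor (twisting $\pi$ to $\pi\otimes\chi$) and to $\chi\circ x_2^+$ on $\O(\calv_2^+)$; via the $\PGL_2\times\PGL_2$ picture, where the spinor norm is $\det g_1\det g_2$ modulo squares, one obtains $\rmO_2^+(\pi)\otimes\chi\circ x_2^+\simeq\rmO_2^+(\pi\otimes\chi)$ after pairing the character $\chi$ with the trivial factor via the $c$-action. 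The second equivalence in (3) uses $\rmO_2^-(\St)\simeq\1_{\O(\calv_2^-)}$ from Proposition~\ref{prop:71}(\ref{prop:712}), so its twist by $\chi\circ x_2^-$ is the character $\chi\circ x_2^-$, which matches the cuspidal datum defining $\Xi_n^\chi$. For (4) the identity $\rmGO_2^-(\St\otimes\chi)=\ind^{\GO}_{\GSO}(\chi\circ\mathrm{N}_{D_-})\boxtimes\1$ restricts via Mackey to $\ind^{\O(\calv_2^-)}_{\SO(\calv_2^-)}\chi\circ x_2^-$; for $\chi\neq 1$ this decomposes as $\chi\circ x_2^-\oplus\chi\circ x_2^-\otimes\sgn_{\O(\calv_2^-)}$, and propagation through the parabolic induction yields $\Xi_n^\chi\oplus\Xi_n^\chi\otimes\sgn_{\O(\calv_n^-)}$.

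The main obstacle is the $n=2$ identification underlying (1): tracking the accidental isomorphism, the position of the Siegel parabolic, the Mackey restriction from $\GO$ to $\O$, and the alignment of the spinor norm with the quadratic twist in (3) simultaneously requires careful bookkeeping. Once these points are settled, (2), (4), and the propagation to general $n$ are routine applications of induction in stages.
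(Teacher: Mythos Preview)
Your approach matches the paper's: reduce to $n=2$ via induction in stages, then exploit the accidental isomorphism $\GSO(\calv_2^\vep)\simeq(D_\vep^\times\times D_\vep^\times)/F^\times$ together with an explicit spinor-norm formula. Two corrections are needed.

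First, your spinor-norm formula in (3) is wrong. On
\[\SO(\calv_2^\vep)\simeq\{(\alp,\bet)\in D_\vep^\times\times D_\vep^\times\mid\mathrm{N}_\vep(\alp)\mathrm{N}_\vep(\bet)=1\}/\{(z,z^{-1})\}\]
the paper records (from \cite[\S7.4(B)]{Sh4}) that $x_2^\vep((\alp,\bet))=\mathrm{N}_\vep(\alp)F^{\times2}=\mathrm{N}_\vep(\bet)F^{\times2}$ and $x_2^\vep(c)=F^{\times2}$. Your product $\det g_1\det g_2$ is identically $1$ on $\SO(\calv_2^+)$, hence cannot be the spinor norm. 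With the correct formula the twist in (3) lands directly on the $\pi$-factor, no ``pairing via the $c$-action'' needed; and since $(\1\boxtimes\1)^\star=\1_{\GO(\calv_2^-)}$, the second half of (3) follows as you say.

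Second, your treatments of (1) and (6) are too sketchy to stand on their own. For (1) the paper does not argue via Mackey theory plus ``alignment of parabolics'' but simply invokes \cite[Lemma~6.1]{GT}; your stated reason for irreducibility of the restriction (trivial central character on $F^\times$) is in any case not the operative criterion---what matters is $(\pi\boxtimes\1)|_{\SO}\not\simeq(\1\boxtimes\pi)|_{\SO}$. For (6), ``descending the Steinberg constituent on the Levi $\GL_n$'' is not a self-contained argument; the paper appeals to the structure theory of degenerate principal series for even orthogonal groups in \cite{BJ} (cf.\ \cite[Theorem~8.2]{Y2}).
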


\begin{proof}
Lemma 6.1 of \cite{GT} implies (\ref{lem:751}), from which (\ref{lem:752}) is clear.  
Note that 
\[\SO(\calv^\vep_2)\simeq \{(\alp,\bet)\in D_\vep^\times\times D_\vep^\times\;|\;\mathrm{N}_\vep(\alp)=\mathrm{N}_\vep(\bet)^{-1}\}/\{(z,z^{-1})\;|\;z\in F^\times\}. \]
One can observe from \S 7.4(B) of \cite{Sh4} that 
\begin{align*}
x^\vep_2((\alp,\bet))&=\mathrm{N}_\vep(\alp)F^{\times 2}=\mathrm{N}_\vep(\bet)F^{\times2}, & 
x^\vep_2(c)&=F^{\times2}. 
\end{align*}
Since $(\1\boxtimes\1)^\star=\1_{\GO(\calv_2^-)}$, we can prove (\ref{lem:753}). 
If $\chi$ has order $2$, then 
\[\O_2^-(\St\otimes\chi)\simeq \Ind^{\O(\calv^-_2)}_{\SO(\calv^-_2)}\chi\circ\mathrm{N}_-\boxtimes\1\simeq\chi\circ x^-_2\oplus(\chi\circ x^-_2\otimes\sgn_{\O(\calv_2^-)}), \] 
which implies (\ref{lem:754}). 
The proof for (\ref{lem:755}) is similar to the proof for Lemma \ref{lem:74}(\ref{lem:742}). 
The last statement (\ref{lem:756}) is proved in \cite{BJ} (see Theorem 8.2 of \cite{Y2}). 
\end{proof}



The following result is Proposition C.4(\roman{two}) of \cite{GI}. 

\begin{proposition}[\cite{GI}]\label{prop:81}
Let $Q$ be a parabolic subgroup of $\O(\calv^\eps_n)$ with Levi component 
\begin{align*}
&\GL_{n_1}(F)\times\cdots\times\GL_{n_r}(F)\times\O(\calv^\eps_{n_0}), & 
n&=n_0+n_1+\cdots+n_r.  
\end{align*}
\begin{enumerate}
\renewcommand\labelenumi{(\theenumi)}
\item\label{prop:811} Let $\sig$ be a Langlands quotient of a standard module 
\[\Ind^{\O(\calv_n^\eps)}_Q(\sig_1\otimes\alp^{s_1})\boxtimes\cdots\boxtimes(\sig_r\otimes\alp^{s_r})\boxtimes\pi_0, \]
where $\pi_0$ is a tempered representation of $\O(\calv_{n_0}^\eps)$, $\sig_i$ is a tempered representation of $\GL_{n_i}(F)$ and $s_1>\cdots>s_r>0$. 
If $\tht_n(\sig)$ is nonzero, then it is a quotient of 
\[\Ind^{Sp_n}_P(\sig_1\otimes\alp^{s_1})\boxtimes\cdots\boxtimes(\sig_r\otimes\alp^{s_r})\boxtimes\Tht_{n_0}^\psi(\pi_0), \]
where $\G P$ is a parabolic subgroup of $\GSp_n$ with Levi component 
\[\GL_{n_1}(F)\times\cdots\times\GL_{n_r}(F)\times Sp_{n_0}(F). \]
\item\label{prop:812} Let $\vPi$ be a Langlands quotient of a standard module 
\[\Ind^{Sp_{n-1}}_{P'}(\sig_1\otimes\alp^{s_1})\boxtimes\cdots\boxtimes(\sig_r\otimes\alp^{s_r})\boxtimes\rho_0, \]
where $P'$ is a parabolic subgroup of $Sp_{n-1}(F)$ with Levi component 
\begin{align*}
&\GL_{n_1}(F)\times\cdots\times\GL_{n_r}(F)\times Sp_{n_0-1}(F), & 
n&=n_0+n_1+\cdots+n_r, & 
n_0\geq 1, 
\end{align*}
$\rho_0$ is a tempered representation of $Sp_{n_0-1}(F)$, $\sig_i$ is a tempered representation of $\GL_{n_i}(F)$ and $s_1>\cdots>s_r>0$. 
If $\tht_{\calv_n^\eps}(\vPi)$ is nonzero, then it is a quotient of 
\[\Ind^{\O(\calv_n^\eps)}_Q(\sig_1\otimes\alp^{s_1})\boxtimes\cdots\boxtimes(\sig_r\otimes\alp^{s_r})\boxtimes\Tht^\psi_{\calv^\eps_{n_0}}(\rho_0). \]
\end{enumerate}
\end{proposition}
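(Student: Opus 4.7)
The plan is to use the Kudla filtration of the normalized Jacquet module of the Weil representation, combined with Frobenius reciprocity and the Langlands data. Let
\[J=\Ind^{Sp_n}_P\bigl((\sig_1\otimes\alp^{s_1})\boxtimes\cdots\boxtimes(\sig_r\otimes\alp^{s_r})\boxtimes\Tht^\psi_{n_0}(\pi_0)\bigr)\]
be the candidate induced representation on the symplectic side. Showing that $\tht_n(\sig)$ is a quotient of $J$ is equivalent to exhibiting a non-zero $Sp_n$-equivariant map $\Tht_n^\psi(\sig)\to J$. By the definition of $\Tht^\psi_n(\sig)$ as the maximal $\sig$-isotypic quotient of $\ome^\psi_{\calv_n^\eps}$, such a map corresponds to a non-zero $(\O(\calv_n^\eps)\times Sp_n)$-homomorphism $\ome^\psi_{\calv_n^\eps}\to\sig\boxtimes J$. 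Since $\sig$ embeds as the Langlands subrepresentation into the standard module induced from the parabolic opposite to $Q$, a double application of Frobenius reciprocity (along $\bar Q$ on the orthogonal side and along $P$ on the symplectic side) reduces the problem to producing a non-zero $(M_Q\times M_P)$-morphism from the iterated normalized Jacquet module of $\ome^\psi_{\calv_n^\eps}$ to the product of Levi data.

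The second step is to invoke the Kudla filtration of the Jacquet module along $Q$: as an $(M_Q\times Sp_n)$-module, it carries a canonical filtration whose successive subquotients are indexed by tuples $\bfk=(k_1,\ldots,k_r)$ with $0\leq k_i\leq n_i$, each of the form of a tensor product of $\GL_{n_i}$-Jacquet module pieces with a parabolically induced Weil representation coming from a smaller dual pair $\O(\calv_{n_0}^\eps)\times Sp_{n-\sum k_i}$. The strict inequalities $s_1>\cdots>s_r>0$, together with the temperedness of $\sig_i$ and $\pi_0$, force a very rigid exponent match: only the ``top'' index $\bfk=(n_1,\ldots,n_r)$---the one in which every $\GL_{n_i}$-block is transported intact to the $Sp_n$-side of the dual pair---produces a non-zero contribution to the Hom space under analysis. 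The surviving term is precisely the normalized Jacquet module of $J$ along $P$, and the map it furnishes completes the construction.

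The main obstacle is the exponent bookkeeping: for each non-top $\bfk$, one must verify that the exponents arising on the $M_Q$-side of the corresponding filtration term (combining the shifts from the $\GL_{n_i}$-actions in the Weil representation with the normalization of parabolic induction) strictly contradict the Langlands inequalities $s_1>\cdots>s_r>0$. Both the strict positivity and the strictly decreasing nature of the $s_i$ are used decisively here, and the argument requires care in comparing the half-integral shifts in the two inductions. Part (\ref{prop:812}) is handled by the symmetric argument with the roles of the orthogonal and symplectic groups interchanged, using the Kudla filtration along the parabolic $P'$ of $Sp_{n-1}$; this is essentially the reasoning of Appendix C of \cite{GI}, from which the present statement is extracted.
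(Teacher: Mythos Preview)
The paper does not supply its own proof of this proposition: it is stated with the attribution ``[\cite{GI}]'' and the surrounding text says explicitly that it is Proposition~C.4(\roman{two}) of \cite{GI}. So there is nothing in the paper to compare your argument against beyond that citation.

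That said, your outline is an accurate sketch of the method Gan--Ichino use in Appendix~C of \cite{GI}: Frobenius reciprocity reduces the question to a Hom out of the Jacquet module of the Weil representation, the Kudla filtration describes that Jacquet module, and the Langlands inequalities $s_1>\cdots>s_r>0$ together with temperedness kill all but the top piece of the filtration. Your proposal is therefore correct and aligned with the cited source; for the purposes of this paper, however, the appropriate ``proof'' is simply the reference to \cite{GI}.
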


In the following lemma we extend the theta correspondence to semisimple representations. 

\begin{lemma}\label{lem:81}
If $\pi$ is an irreducible admissible unitary infinite dimensional representation of $\PGL_2(F)$, then $\tht_n^\psi(\rmGO_n^\pm(\pi))\simeq \SK^\pm_n(\pi)$. 
\end{lemma}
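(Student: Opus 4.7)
The plan is to derive $\tht_n^\psi(\rmGO_n^\pm(\pi))\simeq\SK_n^\pm(\pi)$ by promoting Schmidt's rank-two computation (Proposition \ref{prop:71}) to higher rank via the propagation of the theta correspondence through Langlands quotients of standard modules (Proposition \ref{prop:81}). The essential point is that both sides of the asserted equivalence are Langlands quotients of standard modules of exactly the same combinatorial shape, and the tempered bases of those standard modules match up under the rank-two theta lift.

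First, recall from Definition \ref{def:71} that $\rmGO_n^\pm(\pi)$ is the Langlands quotient of a standard module whose tempered base is $(\pi^\pm\boxtimes\1)^\star\otimes\alp^{-\bet_n}\circ\lam_2^\pm$ on $\GO(\calv_2^\pm)$ and whose $\GL_2$-pieces are $\pi\otimes\alp^{(n-1)/2},\dots,\pi\otimes\alp^{3/2}$ with strictly decreasing positive exponents. Symmetrically, $\SK_n^\pm(\pi)$ is the Langlands quotient of the corresponding standard module on $\GSp_n$ with base $\SK_2^\pm(\pi)\otimes\alp^{-\bet_n}\circ\lam_2$. Proposition \ref{prop:71}(\ref{prop:711}) supplies the rank-two identity $\tht((\pi^\pm\boxtimes\1)^\star)=\SK_2^\pm(\pi)$ outside the degenerate case $\pi\simeq\St$ and $\eps=-$, so the two bases correspond under the rank-two theta lift.

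Applying Proposition \ref{prop:81}(\ref{prop:811}), adapted to similitudes via Lemma \ref{lem:61} and the identifications $\rmGO_n^\pm(\pi)|_{\O(\calv_n^\pm)}\simeq\rmO_n^\pm(\pi)$ and $\SK_n^\pm(\pi)|_{Sp_n}\simeq\rmSp_n^\pm(\pi)$, one concludes that whenever $\tht_n^\psi(\rmGO_n^\pm(\pi))$ is nonzero it is a quotient of the standard module defining $\SK_n^\pm(\pi)$. Nonvanishing is supplied by the rank-two nonvanishing of Proposition \ref{prop:71} together with the tower property of theta correspondence (equivalently, Lemma \ref{lem:61}(\ref{lem:611}) lifts nonvanishing from isometries to similitudes on an irreducible isometry component of the base). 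Since a nonzero quotient of a standard module surjects onto the unique Langlands quotient, and the similitude Howe conjecture forces $\tht(\rmGO_n^\pm(\pi))$ to be irreducible, we obtain $\tht_n^\psi(\rmGO_n^\pm(\pi))\simeq\SK_n^\pm(\pi)$.

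The main obstacle is the degenerate case $\pi\simeq\St$ with $\vep=-$, in which $\rmGO_2^-(\St)\simeq\1_{\GO(\calv_2^-)}\oplus\sgn_{\GO(\calv_2^-)}$ is reducible (Lemma \ref{lem:73}). Here only the trivial summand survives the rank-two theta lift: $\tht(\1_{\GO(\calv_2^-)})=\SK_2^-(\St)$ by Proposition \ref{prop:71}(\ref{prop:712}), while $\tht(\sgn_{\GO(\calv_2^-)})=0$ by the conservation relation (Proposition \ref{prop:61}). Applying Proposition \ref{prop:81}(\ref{prop:811}) to each summand of the reducible base separately yields the standard module defining $\SK_n^-(\St)$ from the $\1$-summand and a vanishing contribution from the $\sgn$-summand, and the conclusion follows as before. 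An entirely analogous but milder subtlety occurs for the reducible $\rmO_n^-(\St\otimes\chi)$ with a quadratic character $\chi\neq 1$ (Lemmas \ref{lem:73} and \ref{lem:75}(\ref{lem:754})), but the outcome is identical.
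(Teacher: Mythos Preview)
Your strategy of matching Langlands data through Proposition~\ref{prop:81}(\ref{prop:811}) is exactly the paper's, but the nonvanishing step is a genuine gap. Proposition~\ref{prop:81}(\ref{prop:811}) is conditional: it tells you what $\tht_n^\psi(\sig)$ looks like \emph{if} it is nonzero. You assert that nonvanishing follows from ``rank-two nonvanishing together with the tower property,'' but there is no such principle available here. The Rallis tower property concerns moving a fixed representation up a Witt tower; it does not say that nonvanishing of $\tht_2^\psi$ on the tempered base of a standard module implies nonvanishing of $\tht_n^\psi$ on the Langlands quotient of that module. Kudla's induction principle gives information about the big theta lift of the full induced representation, but the Langlands quotient is only a quotient of that, and a map from the Weil representation could land entirely in the kernel.

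The paper supplies nonvanishing by two separate arguments. For $\vep=+$, Lemma~\ref{lem:75}(\ref{lem:752}) gives $\rmO_n^+(\pi)\simeq\rmO_n^+(\pi)\otimes\sgn$, and then the conservation relation (Proposition~\ref{prop:61}) forces $\bfn(\rmO_n^+(\pi))=n$ directly. For $\vep=-$, the paper argues by contradiction: if an irreducible summand $\sig$ of $\rmO_n^-(\pi)$ had $\tht_n^\psi(\sig)=0$, conservation would give $\tht_{n-1}^\psi(\sig\otimes\sgn)\neq 0$; writing this lift as a Langlands quotient on $Sp_{n-1}$ and applying Proposition~\ref{prop:81}(\ref{prop:812}) to lift back to $\O(\calv_n^-)$ identifies the tempered base of $\sig\otimes\sgn$ as coming from $\SL_2$ via $\tht^\psi_{\calv_2^-}$, which by \cite[Lemma~4.1]{GG} forces $\pi\simeq\St$; but then the base is $\sgn_{\O(\calv_2^-)}$, with $\bfn(\sgn_{\O(\calv_2^-)})=4$, so Proposition~\ref{prop:81}(\ref{prop:811}) makes even $\tht_{n+1}^\psi(\sig\otimes\sgn)$ vanish, a contradiction. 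You do invoke conservation, but only on the degenerate rank-two base; the missing idea is to apply it to $\rmO_n^\pm(\pi)$ itself and, in the minus case, to run this back-and-forth between parts (\ref{prop:811}) and (\ref{prop:812}) of Proposition~\ref{prop:81}.
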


\begin{proof}
Once we show that $\tht_n^\psi(\rmO_n^\pm(\pi))$ is nonzero, 
Proposition \ref{prop:81}(\ref{prop:811}) together with Lemmas \ref{lem:72} and \ref{lem:75}(\ref{lem:751}) prove $\tht_n^\psi(\rmO_n^\pm(\pi))\simeq \rmSp^\pm_n(\pi)$. 
Proposition \ref{prop:61} combined with Lemma \ref{lem:75}(\ref{lem:752}) shows that $\bfn(\rmO_n^+(\pi))=n$. 
In particular, $\tht^\psi_n(\rmO_n^+(\pi))$ is nonzero. 

Assume that $\pi$ is a discrete series representation. 
Let $\sig$ be an irreducible summand of $\rmO_n^-(\pi)$. 
Put $\sig'=\sig\otimes\sgn_{\O(\calv_n^-)}$. 
Seeking a contradiction, we suppose that $\tht_n^\psi(\sig)$ is zero. 
Then $\tht_{n-1}^\psi(\sig')$ is nonzero by Proposition \ref{prop:61}. 
Put $\vPi=\tht_{n-1}^\psi(\sig')$.  
If we write $\vPi$ as in Proposition \ref{prop:81}(\ref{prop:812}), then since $\tht_{\calv_n^-}^\psi(\vPi)\simeq\sig'$ by symmetry, $r=\frac{n-2}{2}$, $n_1=n_2=\cdots=n_r=2$ and $\pi_0$ is a tempered representation of $\SL_2(F)$ whose theta lift is equivalent to an irreducible summand of $(\pi^-\boxtimes\1)^\star|_{\O(\calv_2^-)}$. 
If $\pi^-\not\simeq\1$, then the theta lift of $(\pi^-\boxtimes\1)^\star$ to $\GL_2(F)$ is zero by Lemma 4.1(\roman{one}) of \cite{GG}, which is a contradiction in view of Lemma \ref{lem:61}(\ref{lem:611}). 
Thus $\pi^-\simeq\1$ and hence $\pi\simeq\St$ and $\sig'$ is equivalent to the unique irreducible subrepresentation of
\[\Ind^{\O(\calv_n^-)}_{Q^-_n}(\pi\otimes\alp^{(n-1)/2})\boxtimes(\pi\otimes\alp^{(n-3)/2})\boxtimes\cdots\boxtimes(\pi\otimes\alp^{3/2})\boxtimes\sgn_{\O(\calv_2^-)}. \]
Since $\bfn(\sgn_{\O(\calv_2^-)})=4$, Proposition \ref{prop:81}(\ref{prop:811}) again implies that $\tht^\psi_{n+1}(\sig')$ is zero. 
A fortiori, $\tht^\psi_{n-1}(\sig')$ is zero, which is a contradiction. 
\end{proof}

Finally we discuss the real case.  
Let $n$ be an even natural number. 
The direct sum $\frkD^{(n)}_k\oplus\bar\frkD^{(n)}_k$ define a representation of $\PGSp_n(\RR)$. 
 For $\kap\in\NN$ we define the representation $\SK_n^{(-1)^{n/2}}(D_{2\kap})$ of $\PGSp_n(\RR)$ by 
\[\SK_n^{(-1)^{n/2}}(D_{2\kap})=\frkD^{(n)}_{(2\kap+n)/2}\oplus\bar\frkD^{(n)}_{(2\kap+n)/2}. \]
Let $\calv^{(-1)^{n/2}}_n$ be a positive definite quadratic space of dimension $2n$. 
For $\kap\geq\frac{n}{2}$ let $\rmO_n^{(-1)^{n/2}}(D_{2\kap})$ be the irreducible representation of $\O\big(\calv_n^{(-1)^{n/2}}\big)$ whose highest weight is 
\beq
\left(\kap-\frac{n}{2},\kap-\frac{n}{2},\dots,\kap-\frac{n}{2},0,\dots,0\right), \label{tag:81}
\eeq
where $\kap-\frac{n}{2}$ occurs $n$ times. 

\section{CAP representations for even orthogonal groups}\label{sec:9}

We go back to the global setting. 
Our basic number field $F$ is totally real. 
Let $n$ be an even natural number and $\pi\simeq\otimes_v'\pi^{}_v$ an irreducible cuspidal automorphic representation of $\PGL_2(\AA)$ such that $\pi_v\simeq D_{2\kap_v}$ for $v\in\frkS_\infty$. 
When $\eps:\frkS_\infty\cup\frkS_\pi\to\mu_2$ satisfies $\eps_v=(-1)^{n/2}$ for $v\in\frkS_\infty$, we let $\SK^\eps_n(\pi)=\otimes'_v \SK^{\eps_v}_n(\pi_v)$ be an admissible representation of $\PGSp_n(\AA)$. 

\begin{conjecture}\label{coj:91}
$\rmm_\cusp(\SK_n^\eps(\pi))=\frac{1}{2}\bigl(1+\vep\left(\frac{1}{2},\pi\right)\prod_v\eps_v\bigl)$.  
\end{conjecture}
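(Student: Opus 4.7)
The plan is to prove Conjecture~\ref{coj:91} by realizing $\SK_n^\eps(\pi)$ as a global theta lift from a similitude orthogonal group $\GO(\calv_n^\vep,\AA)$ to $\GSp_n(\AA)$, and reducing the multiplicity question to the corresponding multiplicity-one statement on the orthogonal side in Theorem~\ref{thm:91}. The skeleton mirrors the odd-rank strategy of Theorem~\ref{thm:51}, with the dual pair $\Mp(W_n)\times\SO(V_n^\vep)$ replaced by $\GSp_n\times\GO(\calv_n^\vep)$ and the Ikeda--Yamana input replaced by Theorem~\ref{thm:91}.

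First I would handle the vanishing half. If $\vep\bigl(\tfrac{1}{2},\pi\bigr)\prod_v\eps_v=-1$, the claim is that $\SK_n^\eps(\pi)$ cannot occur cuspidally. This is a local--global obstruction of the type established by Lemma~\ref{lem:51} in the odd case: the local dichotomy of Proposition~\ref{prop:21} (extended to even orthogonal similitude groups, with the Saito--Kurokawa packet realized via $\Mp(W_1)_v\times\PGSp_2(F_v)$) shows that each $\SK_n^{\eps_v}(\pi_v)$ arises as the theta lift from one, and only one, space in a pair of quadratic spaces of dimension $2n$ and discriminant $1$, the choice being recorded by $\eps_v\vep\bigl(\tfrac{1}{2},\pi_v\bigr)$ through Lemma~\ref{lem:81}. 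The Hasse--Minkowski product formula applied to the invariants of $\calv_n^\vep$, combined with the archimedean convention $\eps_v=(-1)^{n/2}$, then forces the parity $\vep\bigl(\tfrac{1}{2},\pi\bigr)\prod_v\eps_v=+1$ on any $\eps$ that can be automorphic.

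For the existence half, assume $\vep\bigl(\tfrac{1}{2},\pi\bigr)\prod_v\eps_v=+1$. By the Hasse principle one then obtains a quadratic space $\calv_n^\vep$ of dimension $2n$ and discriminant $1$ whose local components match the $\vep_v$ dictated by the above dichotomy, with $\prod_\frkp\vep_\frkp=(-1)^{dn/2}$. Theorem~\ref{thm:91} gives an irreducible cuspidal automorphic realization of $\otimes_v'\rmO_n^{\vep_v}(\pi_v)$ inside $\O(\calv_n^\vep,\AA)$ with multiplicity one. I would extend this realization to $\GO(\calv_n^\vep,\AA)$ using the similitude analog $\rmGO_n^\vep(\pi)$ of Definition~\ref{def:71}, take the global theta lift to $\GSp_n(\AA)$ using the Weil representation of Section~\ref{sec:6}, and invoke Lemma~\ref{lem:81} to identify each local theta lift with $\SK_n^{\eps_v}(\pi_v)$. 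Multiplicity preservation via the similitude version of Proposition~\ref{prop:62} combined with Lemma~\ref{lem:61} then transports the multiplicity one from the orthogonal side to the $\PGSp_n$ side.

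The main obstacle is the nonvanishing of the global theta lift. By the Rallis inner product formula it reduces to $L\bigl(\tfrac{1}{2},\pi\bigr)\neq 0$, together with nonvanishing of local zeta integrals which I expect to dispose of using the supercuspidal-free and Langlands quotient descriptions of $\rmO_n^{\vep_v}(\pi_v)$. Unlike the odd-rank situation in Theorem~\ref{thm:51}, where Waldspurger's theorem provides a quadratic twist $\pi\otimes\hat\chi^\eta$ with nonvanishing central value and with $V_n^\vep$ transforming compatibly under the twist, here the discriminant of $\calv_n^\vep$ is fixed at $1$ and quadratic twisting does not compensate the change of central value. Consequently, I expect the method to prove Conjecture~\ref{coj:91} only under the additional hypothesis $L\bigl(\tfrac{1}{2},\pi\bigr)\neq 0$, as in Theorem~\ref{thm:92}; the unconditional statement appears to lie beyond theta-lift techniques and to require an Arthur trace formula input. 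A secondary technical point is that passing between $\GSp_n$ and $Sp_n$ multiplicities requires care about the action of the similitude character on the archimedean discrete series pieces $\frkD^{(n)}_{(2\kap+n)/2}\oplus\bar\frkD^{(n)}_{(2\kap+n)/2}$, but this should follow once the $Sp_n$-version in Theorem~\ref{thm:92} is in hand.
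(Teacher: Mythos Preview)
The statement you are attempting to prove is labeled a \emph{conjecture} in the paper, and the paper does not prove it. The closest result is Theorem~\ref{thm:92}, which is stated for $Sp_n$ rather than $\PGSp_n$ and is proved only under the extra hypotheses that no $\pi_\frkp$ is supercuspidal and $L\bigl(\tfrac12,\pi\bigr)\neq 0$. Your outline essentially reproduces the paper's strategy for Theorem~\ref{thm:92}: realize the packet via theta lifting from an even orthogonal group, feed in the multiplicity-one input from Theorem~\ref{thm:91}, and transfer multiplicities via Proposition~\ref{prop:62}. You also correctly diagnose that the existence direction is blocked unless $L\bigl(\tfrac12,\pi\bigr)\neq 0$, for exactly the reason you give.

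Where your proposal has a genuine gap is the vanishing half. You claim this follows from a local--global obstruction ``of the type established by Lemma~\ref{lem:51}'', but that lemma is a \emph{central character} obstruction specific to genuine representations of $\Mp(W_n)$: the identity $z^\psi(\rmMp_n^\psi(\pi_v^{\eps_v}))=\eps_v\,\vep\bigl(\tfrac12,\pi_v\bigr)$ forces the product $\vep\bigl(\tfrac12,\pi\bigr)\prod_v\eps_v$ to be $+1$ on any automorphic form. There is no analogous central-character sign for $\PGSp_n$. Your substitute argument---that each $\SK_n^{\eps_v}(\pi_v)$ is the theta lift from a unique local quadratic space and Hasse--Minkowski then constrains the product---only shows that $\SK_n^\eps(\pi)$ cannot be realized \emph{as a global theta lift} when $\prod_v\vep_v\neq 1$; it does not by itself rule out cuspidal realizations of other origin. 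The paper's proof of the vanishing half of Theorem~\ref{thm:92} is not of this elementary type either: it assumes $\vPi\subset\rmSp_n^\eps(\pi)$ is cuspidal, shows the standard $L$-function $L(s,\vPi)$ has a pole at $s=1$, applies Proposition~\ref{prop:62}(\ref{prop:621}) to produce a nonzero theta lift to some $\calv$ of dimension $2n$, and only then invokes local theta correspondence plus Hasse--Minkowski to force $\prod_v\eps_v=1$. The pole at $s=1$ comes from the $\zeta_\bff(s)$ factor in
\[
L_\bff(s,\vPi)\;\sim\;\zeta_\bff(s)\prod_{j=1}^n L_\bff\Bigl(s+\tfrac{n+1}{2}-j,\pi\Bigr),
\]
and for this pole not to be cancelled one needs $L_\bff\bigl(\tfrac{n+3}{2}-j,\pi\bigr)\neq 0$ for all $j$; taking $j=\tfrac{n}{2}+1$ gives exactly $L\bigl(\tfrac12,\pi\bigr)\neq 0$. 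Thus in the paper's approach the central $L$-value hypothesis is needed for \emph{both} halves, not just existence, and the unconditional Conjecture~\ref{coj:91} remains open.
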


When $n=2$, Conjecture \ref{coj:91} is the well-known result of Piatetski-Shapiro \cite{PS2} (cf. \cite{G}). 
Though similitude groups may be more natural, we shall consider the isometry groups due to the lack of suitable extensions of Lemma \ref{lem:75}(\ref{lem:753}), (\ref{lem:754}) to $\GO(\calv_n^\pm)$. 
Lemma \ref{lem:73} shows that $Sp^\eps_n(\pi)=\otimes'_v Sp^{\eps_v}_n(\pi_v)$ is the sum of $2^{d+k}$ irreducible representations as an abstract representation of $Sp_n(\AA)$, where $k$ is the number of finite primes $\frkp$ such that $\frkp\in\frkS_\pi\setminus\frkS_\pi^-$ and such that $\eps_\frkp=-$.
Recall the sign $\eps_n(\pi_v)\in\mu_2$ defined in Remark \ref{rem:11}. 

\begin{proposition}\label{prop:91}
Let $\pi\simeq\otimes_v'\pi_v$ be an irreducible cuspial automorphic representation of $\PGL_2(\AA)$ such that $\pi_v\simeq D_{2\kap_v}$ for every $v\in\frkS_\infty$ and such that none of $\pi_\frkp$ is supercuspidal. 
Let 
\beq
\Ik_n(\pi)=(\otimes_{v\in\frkS_\infty}\frkD^{(n)}_{(2\kap_v+n)/2})\otimes(\otimes'_\frkp \rmSp^{\eps_n(\pi_\frkp)}_n(\pi_\frkp)) \label{tag:91}
\eeq
be an irreducible representation of $Sp_n(\AA)$. 
If $\vep\left(\frac{1}{2},\pi\right)=(-1)^{\ell_\pi^-+dn/2}$, then 
\[\rmm_\cusp(\Ik_n(\pi))=1. \]
\end{proposition}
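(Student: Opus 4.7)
The plan is to adapt the argument of Proposition \ref{prop:51} to the symplectic setting, with the Duke-Imamoglu-Ikeda lift to $Sp_n$ replacing the half-integral weight lift used there. Three steps are needed: an explicit description of the local components of $\Ik_n(\pi)$, existence of a cuspidal realization via the Ikeda construction, and uniqueness drawn from the explicit Fourier expansion of that lift.

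First I would make the local structure of $\Ik_n(\pi)$ concrete. Since no $\pi_\frkp$ is supercuspidal, there is $\chi_\frkp \in \Ome(F_\frkp^\times)$ so that $\pi_\frkp$ is the unique irreducible subquotient of $I(\chi_\frkp,\chi_\frkp^{-1})$, and at the Steinberg primes the characters $\chi_\frkp\alp_\frkp^{-1/2}$ can be matched with a single global $\xi \in F^\times$ via $\chi_\frkp = \hat\chi^\xi_\frkp\alp_\frkp^{1/2}$, with $\hat\chi^\xi_\frkp$ trivial exactly at $\frkp \in \frkS_\pi^-$. Combining this with the definition of $\eps_n(\pi_\frkp)$ in Remark \ref{rem:11} and Lemma \ref{lem:74} yields
\[
\Ik_n(\pi) \simeq (\otimes_{v \in \frkS_\infty}\frkD^{(n)}_{(2\kap_v+n)/2}) \otimes (\otimes_{\frkp \in \frkS_\pi^-}\St_n) \otimes (\otimes_{\frkp \in \frkS_\pi \setminus \frkS_\pi^-}\St_n(\hat\chi^\xi_\frkp)) \otimes (\otimes'_{\frkp \notin \frkS_\infty \cup \frkS_\pi}I_n(\chi_\frkp)).
\]

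Next I would use Lemma \ref{lem:11} to verify that the hypothesis $\vep\bigl(\frac{1}{2},\pi\bigr) = (-1)^{\ell_\pi^- + dn/2}$ amounts, after twisting $\pi$ by $\hat\chi^{(-1)^{n/2}\eta}$ for a suitable totally positive $\eta \in F^\times$, to the parity condition under which the Duke-Imamoglu-Ikeda lift of the twisted cusp form is nonzero and cuspidal. Its archimedean component is $\frkD^{(n)}_{(2\kap_v+n)/2}$ by the standard weight computation, and at each finite prime the local component matches the factor appearing in the tensor-product identification above. This gives $\rmm_\cusp(\Ik_n(\pi)) \geq 1$.

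For the upper bound I would invoke the explicit Fourier expansion of the Ikeda lift: each nondegenerate Fourier coefficient (indexed by $T \in \scrr_n \cap \Sym_n^+$) is a product of local Siegel series with a single Fourier coefficient of the original cusp form for $\pi$, so any two cusp forms realizing $\Ik_n(\pi)$ are proportional. Equivalently, each local factor of $\Ik_n(\pi)$ embeds into a degenerate principal series whose $T$-th degenerate Whittaker functional is one-dimensional, while the archimedean factor $\frkD^{(n)}_{(2\kap_v+n)/2}$ is a holomorphic discrete series with a unique lowest-weight vector; hence any cusp form in $\Ik_n(\pi)$ is pinned down by a single Fourier coefficient, yielding $\rmm_\cusp(\Ik_n(\pi)) \leq 1$. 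The principal difficulty is the precise root-number bookkeeping needed to identify the global sign condition with the parity hypothesis of Ikeda's construction, handling the four classes of primes uniformly, which mirrors the verification carried out in the proof of Proposition \ref{prop:51}.
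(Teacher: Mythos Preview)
Your proposal is correct and follows essentially the same route as the paper. The paper's proof identifies the local components via Lemma \ref{lem:74} exactly as you do, reduces the root-number hypothesis to the parity condition $(-1)^{\sum_v\kap_v}\prod_\frkp\mu_\frkp(-1)=1$ after the twist by $\hat\chi^{(-1)^{n/2}\eta}$, and then simply cites \cite[Theorem 1.2, Remark 6.2(1)]{IY} applied to $\pi\otimes\hat\chi^{(-1)^{n/2}\eta}$ for both existence and multiplicity one; your unpacking of the multiplicity-one step via uniqueness of degenerate Whittaker functionals is precisely the mechanism inside that citation.
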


\begin{proof}
We retain the notation of Remark \ref{rem:51}. 
Take $\xi\in F^\times$ so that $\chi_\frkp=\hat\chi^\xi_\frkp\alp_\frkp^{1/2}$ for $\frkp\in\frkS_\pi$. 
Lemma \ref{lem:74} shows that $\Ik_n(\pi)$ is equivalent to 
\[(\otimes_{v\in\frkS_\infty}\frkD^{(n)}_{(2\kap_v+n)/2})\otimes(\otimes_{\frkp\in\frkS_\pi}\St_n(\hat\chi_\frkp^\xi))\otimes(\otimes'_{\frkp\notin\frkS_\pi}I_n(\chi_\frkp)). \]
Fix a totally positive element $\eta$ in $F$. 
Put $\mu_\frkp=\chi_\frkp\hat\chi^{(-1)^{n/2}\eta}_\frkp$. 
By assumption $(-1)^{\sum_{v\in\frkS_\infty}\kap_v}\prod_\frkp\mu_\frkp(-1)=1$. 
We can now apply \cite[Theorem 1.2, Remark 6.2(1)]{IY} to $\pi\otimes\hat\chi^{(-1)^{n/2}\eta}$ to observe that $\rmm_\cusp(\Ik_n(\pi))=1$. 
\end{proof}

Let $\vep:\frkS_\infty\cup\frkS_\pi\to\mu_2$ be a function which satisfies $\prod_v\vep_v=1$ and $\vep_v=(-1)^{n/2}$ for $v\in\frkS_\infty$. 
 Theorem 4.4 of \cite{Sh5} gives rise to a quadratic space $\calv_n^\vep$ over $F$ whose localization at $v$ is isometric to $\calv_n^{\vep_v}$ for all $v$. 
Let $\rmO_n^\vep(\pi)=\otimes'_v\rmO_n^{\vep_v}(\pi_v)$ be a representation of $\O(\calv^\vep_n,\AA)$. 

\begin{theorem}\label{thm:91}
Let $\pi$ be an irreducible cuspial automorphic representation of $\PGL_2(\AA)$ such that $\pi_v\simeq D_{2\kap_v}$ with $\kap_v\geq\frac{n}{2}$ for $v\in\frkS_\infty$ and such that none of $\pi_\frkp$ is supercuspidal. 
If $\prod_v\vep_v=1$ and $\vep_v=(-1)^{n/2}$ for $v\in\frkS_\infty$, then every irreducible summand of $\rmO_n^\vep(\pi)$ occurs in $\scra_\cusp(\O(\calv_n^\vep))$ with multiplicity one. 
\end{theorem}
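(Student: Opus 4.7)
The plan is to realize each irreducible summand of $\rmO_n^\vep(\pi)$ as the theta lift from $Sp_n(\AA)$ of an Ikeda-type cuspidal representation $\Ik_n(\pi\otimes\hat\chi^\eta)$ for a suitable quadratic twist, and then to deduce multiplicity one from the preservation statement in Proposition \ref{prop:62}. This parallels the strategy used to prove Theorem \ref{thm:51} on the metaplectic/odd orthogonal side.

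First I would pick $\eta\in F^\times$ which is totally positive, satisfies $L(\frac{1}{2},\pi\otimes\hat\chi^\eta)\neq 0$ (by \cite[Theorem 4]{W2} or \cite{FH}), and is arranged so that $\frkS^-_{\pi\otimes\hat\chi^\eta}=\{\frkp:\vep_\frkp=-\}$. Non-vanishing of the central $L$-value forces $\vep(\frac{1}{2},\pi\otimes\hat\chi^\eta)=+1$, and combining this with identity (\ref{tag:51}) and the assumption $\prod_v\vep_v=1$ verifies the parity hypothesis of Proposition \ref{prop:91} for $\pi\otimes\hat\chi^\eta$; that proposition then produces a unique cuspidal embedding $\Ik_n(\pi\otimes\hat\chi^\eta)\hookrightarrow\scra_\cusp(Sp_n)$. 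To apply Proposition \ref{prop:62}(\ref{prop:621}) I must check that the standard $L$-function of $\Ik_n(\pi\otimes\hat\chi^\eta)$, which factors (through the induction data of Definition \ref{def:71}) as a product of shifts $L(s+(n+1)/2-j,\pi\otimes\hat\chi^\eta)$ together with the $L$-function of $\SK_2^\pm(\pi\otimes\hat\chi^\eta)$, is holomorphic on $\Re s>1$ and has a pole at $s=1$; the pole comes from the Siegel-type factor and the holomorphy on $\Re s>1$ follows from the assumption $\kap_v\geq\frac{n}{2}$ together with standard Euler product bounds. Proposition \ref{prop:62} then produces a unique $2n$-dimensional quadratic space $\calv$ of discriminant $1$ over which the theta lift is nonzero and cuspidal with multiplicity preserved. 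Using Proposition \ref{prop:61}, Lemma \ref{lem:81} and its isometry version via Lemma \ref{lem:61} to compare local theta correspondences, and matching local Hasse invariants place by place, I identify $\calv\simeq\calv_n^\vep$ and the global theta lift with an irreducible summand of $\rmO_n^\vep(\pi\otimes\hat\chi^\eta)$.

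The main obstacle is exhausting all irreducible summands of $\rmO_n^\vep(\pi)$: by Lemma \ref{lem:75}(\ref{lem:754}), at any prime $\frkp$ with $\pi_\frkp\simeq\St\otimes\chi_\frkp$ for a non-trivial quadratic $\chi_\frkp$ and $\vep_\frkp=-$, the local representation splits as $\Xi_n^{\chi_\frkp}\oplus\Xi_n^{\chi_\frkp}\otimes\sgn_{\O(\calv_n^-)}$. I would handle each such $\frkp$ by arranging, via the choice of $\eta$, that $(\pi\otimes\hat\chi^\eta)_\frkp=\St$, so that the local Howe lift of the $\frkp$-component of $\Ik_n(\pi\otimes\hat\chi^\eta)$ picks out a single specific summand of $\rmO_n^{\vep_\frkp}(\pi\otimes\hat\chi^\eta)_\frkp$; the opposite summand is then reached by tensoring the constructed cuspidal form with an automorphic character of $\O(\calv_n^\vep,\AA)$ whose local component at $\frkp$ flips the sign via Lemma \ref{lem:75}(\ref{lem:754}) and whose remaining local components permute only within equivalence classes we have already produced. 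Running over all resulting twist choices produces all $2^N$ irreducible summands (where $N$ is the number of such split primes), and each appears with multiplicity one by multiplicity preservation in Proposition \ref{prop:62}.
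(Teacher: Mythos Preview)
Your overall strategy matches the paper's: twist $\pi$ so that Proposition~\ref{prop:91} applies, produce $\Ik_n(\pi\otimes\hat\chi^\eta)$ in $\scra_\cusp(Sp_n)$, analyze its standard $L$-function to apply Proposition~\ref{prop:62}(\ref{prop:621}), and identify the resulting theta lift with a summand of $\rmO_n^\vep(\pi\otimes\hat\chi^\eta)$. The reduction of all irreducible summands to a single one via automorphic sign characters is also what the paper does (it fixes one summand $\Xi_n^\vep(\pi)$ and observes $\sig\simeq\Xi_n^\vep(\pi)\otimes\sgn_T$ for suitable $T$).

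There is, however, a genuine gap in your final step. Proposition~\ref{prop:62} is strictly weaker than Proposition~\ref{prop:22}: it asserts that the multiplicity of $\tht^\psi_\calv(\vPi)$ \emph{in $\tht^\psi_\calv(\scra_\cusp(Sp_n))$} equals $\rmm_\cusp(\vPi)$, not that $\rmm_\cusp(\tht^\psi_\calv(\vPi))=\rmm_\cusp(\vPi)$. So from $\rmm_\cusp(\Ik_n(\sig))=1$ you only obtain $\rmm_\cusp(\Xi_n^\vep(\sig))\geq 1$. The paper closes the argument by lifting \emph{back}: one checks that $L(s,\Xi_n^\vep(\sig))$ is entire with $L(1,\Xi_n^\vep(\sig))\neq 0$ and that each local theta lift to $Sp_n$ is nonzero (Lemma~\ref{lem:81}), so Proposition~\ref{prop:62}(\ref{prop:622}) applies and gives
\[
\rmm_\cusp(\Xi_n^\vep(\sig))\;\leq\;\rmm_\cusp(\tht^\psi_n(\Xi_n^\vep(\sig)))\;=\;\rmm_\cusp(\Ik_n(\sig))\;=\;1.
\]
This back-and-forth is the substitute for the clean multiplicity equality available in Proposition~\ref{prop:22}, and you need to include it.

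A smaller point: to pass between summands of $\rmO_n^\vep(\pi\otimes\hat\chi^\eta)$ and summands of $\rmO_n^\vep(\pi)$ you need more than the sign characters $\sgn_T$; you also need the spinor-norm twist $\hat\chi^\eta\circ x_n^\vep$ (Lemma~\ref{lem:75}(\ref{lem:753})), possibly combined with a $\sgn_T$ when some $\kap_v=\frac{n}{2}$. The paper makes this explicit by first showing $\rmm_\cusp(\Xi_n^\vep(\pi))=\rmm_\cusp(\Xi_n^\vep(\pi\otimes\hat\chi^\xi))$ via the spinor norm, which frees up the choice of twist before the theta-lift construction.
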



\begin{proof}
For $\frkp\in\frkS_\pi$ there is a quadratic character $\chi_\frkp$ of $F_\frkp^\times$ such that $\pi_\frkp\simeq\St_\frkp\otimes\chi_\frkp$ by assumption. 
Put 
\begin{align*}
\frkS_\vep&=\{\frkp\;|\;\vep_\frkp=-\}, & 
\frkS_\pi^\vep&=\frkS_\pi\cap\frkS_\vep. 
\end{align*}
Let 
\[\Xi_n^\vep(\pi)=(\otimes_{\frkp\in\frkS_\pi^\vep}\Xi_n^{\chi_\frkp})\otimes(\otimes_{v\notin\frkS_\pi^\vep}\rmO_n^\vep(\pi_v))\]
be an irreducible representation of $\O(\calv_n^\vep,\AA)$. 
Given a finite set $T$ of places of $F$ of even cardinality, we can define the automorphic character $\sgn_T:\O(\calv_n^\vep,\AA)\to\mu_2$ by $\sgn_T(g)=\prod_{v\in T}\sgn_{\O(\calv_n^{\vep_v})}(g_v)$. 
When $\sig$ is an irreducible summand of $\rmO_n^\vep(\pi)$, Lemma \ref{lem:75}(\ref{lem:752}), (\ref{lem:754}) gives $T$ such that $\sig\simeq\Xi_n^\vep(\pi)\otimes\sgn_T$. 
Thus $\rmm_\cusp(\sig)=\rmm_\cusp(\Xi_n^\vep(\pi))$. 
We will show that $\rmm_\cusp(\Xi_n^\vep(\pi))=1$. 

Define a homomorphism $x_n^\vep:\O(\calv_n^\vep,\AA)\to\AA^\times/\AA^{\times 2}$ by $x_n^\vep(g)=(x^{\vep_v}_n(g_v))$ for $g=(g_v)\in\O(\calv_n^\vep,\AA)$. 
The composition $\hat\chi^\xi\circ x_n^\vep$ is an automorphic character of $\O(\calv_n^\vep,\AA)$ for every $\xi\in F^\times$. 
If $\kap_v>\frac{n}{2}$, then 
\[\Xi_n^\vep(\pi)\simeq\Xi_n^\vep(\pi\otimes\hat\chi^\xi)\otimes\hat\chi^\xi\circ x_n^\vep \]
for all $\xi$ by (\ref{tag:33}) and Lemma \ref{lem:75}(\ref{lem:753}). 
If $\kap_v=\frac{n}{2}$, then there is $T$ such that 
\[\Xi_n^\vep(\pi)\simeq\Xi_n^\vep(\pi\otimes\hat\chi^\xi)\otimes(\hat\chi^\xi\circ x_n^\vep)\sgn_T. \]
One sees that 
\[\rmm_\cusp(\Xi_n^\vep(\pi))=\rmm_\cusp(\Xi_n^\vep(\pi\otimes\hat\chi^\xi)). \]

In view of Remark \ref{rem:51} we may assume that $\vep\left(\frac{1}{2},\pi\right)=(-1)^{\ell_\pi^-+dn/2}$ at the cost of replacing $\pi$ with $\pi\otimes\hat\chi^\xi$ for a suitably chosen $\xi\in F^\times$. 
Take a totally positive element $\eta\in F^\times$ such that $\frkS_{\pi\otimes\hat\chi^\eta}^-=\frkS_\vep$. 
Put $\sig=\pi\otimes\hat\chi^\eta$. 
Since $(-1)^{\ell_\sig^-}=\prod_\frkp\vep_\frkp=\prod_{v\in\frkS_\infty}\vep_v=(-1)^{dn/2}$, 
\[\vep\left(\frac{1}{2},\pi\otimes\hat\chi^\eta\right)
=(-1)^{\ell_\pi^-+\ell_{\pi\otimes\hat\chi^\eta}^-}\vep\left(\frac{1}{2},\pi\right)
=(-1)^{\ell_\pi^-+dn/2}\vep\left(\frac{1}{2},\pi\right)=1\]
by (\ref{tag:51}). 
Since $\vep\left(\frac{1}{2},\sig\right)=1=\prod_v\vep_v=(-1)^{\ell_\sig^-+dn/2}$, Proposition \ref{prop:91} shows that $\rmm_\cusp(\Ik_n(\sig))=1$.
We may assume that $L\bigl(\frac{1}{2},\sig\bigl)\neq 0$ on account of Theorem 4 of \cite{W2}. 
 
Let $L_\bff(s,\Ik_n(\sig))=\prod_\frkp L(s,\rmSp_n^{\eps_n(\sig_\frkp)}(\sig_\frkp))$ be the finite part of the standard $L$-function of $\Ik_n(\sig)$. 
Lemma \ref{lem:74} says that 
\[\frac{L_\bff(s,\Ik_n(\sig))}{\zet_\bff(s)\prod_{j=1}^n L_\bff\left(s+\frac{n+1}{2}-j,\sig\right)}=\prod_{\frkp\in\frkS_\sig^-}\frac{L(s,\St_{2,\frkp})}{\zet_\frkp(s)L\left(s+\frac{1}{2},\St_\frkp\right)L\left(s-\frac{1}{2},\St_\frkp\right)}. \]
In the proof of Theorem \ref{thm:51} we have seen that $L_\bff(s,\sig)$ is nonzero at $s=\frac{n+1}{2}+1-j$ for $j=1,2,\dots,n$. 
Thus the denominator of the left hand side has a pole at $s=1$. 
Since $L(s,\St_{2,\frkp})$ is holomorphic for $\Re s>0$ by Lemma \ref{lem:72}(\ref{lem:722}) and \cite[Lemma 7.2]{Y3}, the right hand side is nonzero at $s=1$ and holomorphic for $\Re s>0$. 
Thus $L_\bff(s,\Ik_n(\sig))$ has a pole at $s=1$ and is holomorphic for $\Re s>1$. 
Since $\frkD_{(2\kap_v+n)/2}^{(n)}$ is tempered, its $L$-factor is holomorphic for $\Re s>0$ by Lemma 7.2 of \cite{Y3}. 
Therefore the complete $L$-function of $\Ik_n(\sig)$ has a pole at $s=1$ and is holomorphic for $\Re s>1$. 

Proposition \ref{prop:62}(\ref{prop:621}) gives a quadratic space $\calv$ of dimension $2n$ and discriminant $1$ such that $\tht^\psi_\calv(\Ik_n(\sig))$ is nonzero and cuspidal. 
Because of the choice of $\eta$, Proposition \ref{prop:81} shows that
\begin{align*}
\calv&\simeq \calv_n^\vep, & 
\tht^\psi_{\calv_n^\vep}(\Ik_n(\sig))&\simeq\Xi_n^{\vep}(\sig)\simeq\Xi_n^\vep(\pi)\otimes\hat\chi^\eta\circ x^\vep_n. 
\end{align*}
In particular, $\rmm_\cusp(\Xi_n^\vep(\pi))\geq 1$. 
Arguing as above, one can see that the standard $L$-function of $\Xi_n^{\vep}(\sig)$ is entire and has no zero at $s=1$. 
We get  
\[\rmm_\cusp(\Xi_n^\vep(\pi))=\rmm_\cusp(\Xi_n^\vep(\sig))\leq\rmm_\cusp(\tht^\psi_n(\Xi_n^{\vep}(\sig)))=\rmm_\cusp(\Ik_n(\sig))=1\]
by Proposition \ref{prop:62}(\ref{prop:622}).  
\end{proof}

\begin{theorem}\label{thm:92}
Let $\pi\simeq\otimes_v'\pi_v$ be an irreducible cuspial automorphic representation of $\PGL_2(\AA)$ such that $\pi_v\simeq D_{2\kap_v}$ with $\kap_v\geq\frac{n}{2}$ for $v\in\frkS_\infty$. 
If none of $\pi_\frkp$ is supercuspidal and $L\bigl(\frac{1}{2},\pi\bigl)\neq 0$, then the multiplicity of irreducible summands of $\rmSp_n^\eps(\pi)$ in $\scra_\cusp(Sp_n)$ is $\frac{1}{2}(1+(-1)^{dn/2}\prod_\frkp\eps_\frkp)$. 
\end{theorem}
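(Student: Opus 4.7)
The strategy is to derive Theorem \ref{thm:92} from Theorem \ref{thm:91} by transferring the multiplicity statement across the theta correspondence between $\O(\calv_n^\vep,\AA)$ and $Sp_n(\AA)$, using Proposition \ref{prop:62} to preserve multiplicities and Lemma \ref{lem:81} to identify local lifts. The key observation is that the sign $(-1)^{dn/2}\prod_\frkp\eps_\frkp$ appearing in the statement equals $\prod_v\vep_v$ as soon as one puts $\vep_v=(-1)^{n/2}$ at each archimedean place and $\vep_\frkp=\eps_\frkp$ at each finite place; by the Minkowski--Hasse theorem this controls whether a global quadratic space $\calv_n^\vep$ exists, and the argument splits along this dichotomy.

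\textbf{Existence direction} $(\prod_\frkp\eps_\frkp=(-1)^{dn/2})$. With $\vep$ as above one has $\prod_v\vep_v=1$, so Theorem \ref{thm:91} produces every irreducible summand $\sig$ of $\rmO_n^\vep(\pi)$ in $\scra_\cusp(\O(\calv_n^\vep))$ with multiplicity one. I would then apply Proposition \ref{prop:62}(\ref{prop:622}) to $\sig$: the standard $L$-function $L(s,\sig)$ decomposes (up to explicit local correction factors at $\frkp\in\frkS_\pi^-$, analogous to the factorization carried out in the proof of Theorem \ref{thm:91}) into a product of shifted $L$-factors $L(s+\tfrac{n+1}{2}-j,\pi)$ for $j=1,\dots,n$. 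Since $\pi$ is cuspidal, $L(s,\pi)$ is entire and nonvanishing on $\Re s\geq 1$ (Jacquet--Shalika), and the critical shift $s=1/2$ reaches $j=n/2+1$ where $L(1/2,\pi)\neq 0$ by hypothesis; together with $\kap_v\geq n/2$ controlling the archimedean factors, this yields $L(s,\sig)$ entire with $L(1,\sig)\neq 0$. Local nonvanishing of the theta lift $\tht^{\psi_v}_n(\sig_v)\neq 0$ at every place is Lemma \ref{lem:81}. Hence $\tht^\psi_n(\sig)$ is cuspidal with $\rmm_\cusp(\tht^\psi_n(\sig))=\rmm_\cusp(\sig)=1$, and Lemma \ref{lem:81} together with the extended Howe conjecture identifies it with an irreducible summand of $\rmSp_n^\eps(\pi)$. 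As $\sig$ ranges over irreducible summands of $\rmO_n^\vep(\pi)$, the Howe bijection exhausts all irreducible summands of $\rmSp_n^\eps(\pi)$ exactly once.

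\textbf{Vanishing direction} $(\prod_\frkp\eps_\frkp=-(-1)^{dn/2})$. Suppose $\vPi\hookrightarrow\scra_\cusp(Sp_n)$ were an irreducible summand of $\rmSp_n^\eps(\pi)$. The standard $L$-function factors as $\zet(s)$ times shifted $L$-factors of $\pi$ (the CAP shape of $\vPi$); the $\zet(s)$-pole at $s=1$ is not cancelled because $L(1/2,\pi)\neq 0$, so $L(s,\vPi)$ is holomorphic for $\Re s>1$ with a simple pole at $s=1$. Proposition \ref{prop:62}(\ref{prop:621}) then yields a unique quadratic space $\calv$ of dimension $2n$ and discriminant $1$ such that $\tht^\psi_\calv(\vPi)$ is nonzero and cuspidal. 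Local Howe correspondence pins down $\calv_v\simeq\calv_n^{\vep_v}$ with $\vep$ exactly as above (Lemma \ref{lem:81} at finite primes; at archimedean places $\frkD^{(n)}_{(2\kap_v+n)/2}$ and $\bar\frkD^{(n)}_{(2\kap_v+n)/2}$ both lift only to the positive definite space $\calv_n^{(-1)^{n/2}}$). The resulting equality $\prod_v\vep_v=1$ contradicts the sign hypothesis, so no such $\vPi$ can exist.

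\textbf{Main obstacle.} The chief difficulty is the $L$-function bookkeeping on the orthogonal side: rigorously identifying $L(s,\sig)$ in terms of $L(s,\pi)$ with the correct shifts and correction factors, and checking that $L(1/2,\pi)\neq 0$ suffices for $L(1,\sig)\neq 0$ without other central-strip zeros intervening. A subsidiary delicacy is the boundary case $\kap_v=n/2$, where the archimedean constituent $\frkD^{(n)}_n$ sits at the edge of the discrete series and one must verify that its $L$-factor remains holomorphic for $\Re s>0$ and that the archimedean local theta lift still behaves as required. Finally, one must track how the extensions and $\sgn_T$-twists present in the proof of Theorem \ref{thm:91} interact with the similitude/isometry passage of Lemma \ref{lem:61}, so that the bijection between irreducible summands on the two sides is genuinely one-to-one.
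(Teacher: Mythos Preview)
Your approach is essentially the paper's: transfer multiplicities across the theta lift between $\O(\calv_n^\vep)$ and $Sp_n$ using Proposition~\ref{prop:62}, with Lemma~\ref{lem:81} for the local identification and Theorem~\ref{thm:91} supplying multiplicity one on the orthogonal side. The $L$-function analysis you sketch is exactly what the paper invokes by reference to the proof of Theorem~\ref{thm:91}.

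There is one small gap in your existence direction. From Proposition~\ref{prop:62}(\ref{prop:622}) you only get that the multiplicity of $\tht^\psi_n(\sig)$ \emph{in the theta image} $\tht^\psi_n(\scra_\cusp(\O(\calv_n^\vep)))$ equals $\rmm_\cusp(\sig)=1$; this yields $\rmm_\cusp(\vPi)\geq 1$, not equality. To obtain $\rmm_\cusp(\vPi)\leq 1$ you must run your ``vanishing direction'' argument in the case $\prod_v\vep_v=1$ as well: assume $\vPi$ cuspidal, apply Proposition~\ref{prop:62}(\ref{prop:621}) (the pole at $s=1$ is there regardless of the sign), and deduce $\rmm_\cusp(\vPi)\leq\rmm_\cusp(\tht^\psi_{\calv_n^\vep}(\vPi))=1$ from Theorem~\ref{thm:91}. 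The paper organizes the proof exactly this way, first establishing $\leq 1$ (and the sign constraint) via Proposition~\ref{prop:62}(\ref{prop:621}), then $\geq 1$ via Proposition~\ref{prop:62}(\ref{prop:622}). One further detail the paper makes explicit: at archimedean places you may assume $\vPi_v\simeq\frkD^{(n)}_{(2\kap_v+n)/2}$ by conjugating with an element of $\GSp_n(\RR)$ of negative similitude, which swaps $\frkD^{(n)}$ and $\bar\frkD^{(n)}$ without affecting cuspidal multiplicity.
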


\begin{proof}
Let $\vPi$ be an irreducible summand of $\rmSp_n^\eps(\pi)$. 
Suppose that $\vPi$ is a cuspidal automorphic representation of $Sp_n(\AA)$. 
Since $Sp_n^+(\pi_v)^g\simeq Sp_n^+(\pi_v)$ for all $g\in\GSp_n(F_v)$ and since $\frkD^{(n)}_k{}^g\simeq\bar\frkD^{(n)}_k$ if $g\in\GSp_n(\RR)$ with $\lam_n(g)<0$, we may assume that $\vPi_v\simeq\frkD^{(n)}_{(2\kap_v+n)/2}$ for $v\in\frkS_\infty$. 
In the proof of Theorem \ref{thm:92} we have observed that $L(s,\vPi)$  is holomorphic for $\Re s>1$ and has a pole at $s=1$. 
Proposition \ref{prop:62}(\ref{prop:621}) gives a quadratic space $\calv$ of dimension $2n$ and discriminant $1$ such that $\tht^\psi_\calv(\vPi)$ is nonzero. 
Since $\calv(F_v)\simeq\calv_n^{\eps_v}$ for all $v$ by Proposition \ref{prop:81}, we have $\prod_v\eps_v=1$. 
Since $\tht^\psi_\calv(\vPi)$ is equivalent to an irreducible summand of $\rmO_n^\eps(\pi)$ by Lemma \ref{lem:81}, the multiplicity preservation in Proposition \ref{prop:62}(\ref{prop:621}) and multiplicity one in Theorem \ref{thm:91} show that 
\[\rmm_\cusp(\vPi)\leq\rmm_\cusp(\tht^\psi_\calv(\vPi))=1. \] 

Suppose that $\prod_v\eps_v=1$. 
By Theorem \ref{thm:91} $\rmO^\eps_n(\pi)$ is a cuspidal automorphic representation of $\O(\calv_n^\eps,\AA)$. 
When $\sig$ is an irreducible summand of $\rmO^\eps_n(\pi)$, we can show that $L(s,\sig)$ is entire and has no zero at $s=1$ as in the proof of Theorem \ref{thm:91}. 
Proposition \ref{prop:62}(\ref{prop:622}) and Lemma \ref{lem:81} show that 
\[(\otimes_{v\in\frkS_\infty}\frkD^{(n)}_{(2\kap_v+n)/2})\otimes(\otimes'_\frkp \rmSp^{\eps_\frkp}_n(\pi_\frkp))\simeq\tht^\psi_n(\rmO^\eps_n(\pi))\] 
is a cuspidal automorphic representation of $Sp_n(\AA)$. 
Thus $\rmm_\cusp(\vPi)\geq 1$. 
\end{proof}

\section{Duke-Imamoglu-Ikeda lifts and theta functions}\label{sec:10}
 
We denote the integer ring of $F$ by $\frko$, that of $F_\frkp$ by $\frko_\frkp$ and the different of $F/\QQ$ by $\frkd=\prod_\frkp\frkd_\frkp$. 
From now on we assume that $dn$ is a multiple of $4$. 
Then there exists a totally positive definite quadratic space $(\calv_n,(\;,\;))$ of rank $2n$ that is split at every finite prime. 
To simplify the matter, we suppose that $\pi$ is an irreducible cuspidal automorphic representation of $\PGL_2(\AA)$ whose archimedean component is $D_n^{\boxtimes d}$ with even natural number $n$ and whose nonarchimedean component is an irreducible principal series $\otimes'_\frkp I(\mu_\frkp^{},\mu_\frkp^{-1})$. 
For ease of notations we put 
\begin{align*}
Sp_n(\pi)&=\frkD_n^{(n)\boxtimes d}\otimes(\otimes'_\frkp I_n(\mu_\frkp))
\simeq\Ik_n(\pi), \\
\O_n(\pi)&=\1_{\O(\calv_n,\AA_\infty)}\otimes(\otimes'_\frkp J_n(\mu_\frkp))\simeq\O_n^{\eps_n(\pi)}(\pi). 
\end{align*}
(cf. (\ref{tag:81}), Lemmas \ref{lem:74}(\ref{lem:742}) and \ref{lem:75}(\ref{lem:755}))
If $\vep\bigl(\frac{1}{2},\pi\bigl)=1$, then Proposition \ref{prop:91} and Theorem \ref{thm:91} give rise to intertwining embeddings 
\begin{align*}
i_n&:Sp_n(\pi)\hookrightarrow\scra_\cusp(Sp_n), & 
j_n&:\O_n(\pi)\hookrightarrow\scra_\cusp(\O(\calv_n)). 
\end{align*} 
These intertwining maps are unique up to scalar multiple. 

The Weil representation $\ome^\psi_{\calv_n}$ can be realized on the space $\cals(\calv_n^n(\AA))$ of Schwartz functions on $\calv_n^n(\AA)$ with 
\begin{align*}
\left[\ome^\psi_{\calv_n}\left(\begin{pmatrix} a & b\trs a^{-1} \\ 0 & \trs a^{-1}\end{pmatrix}\right)\phi\right](x)&=\psi(bQ(x,x))\alp(a)^n\phi(xa), \\ 
[\ome^\psi_{\calv_n}(h)\phi](x)&=\phi(h^{-1}x)
\end{align*}
for $\phi\in\cals(\calv_n^n(\AA))$, $a\in\GL_n(\AA)$, $b\in\Sym_n(\AA)$ and $h\in\O(\calv_n,\AA)$, where $Q(x)=\frac{1}{2}((x_i,x_j))\in\Sym_n(\AA)$ is the matrix of inner product of the components of $x=(x_1,\dots,x_n)\in\calv_n^n(\AA)$. 
Let $S(\calv_n^n(\AA))$ be the subspace of $\cals(\calv_n^n(\AA))$ which corresponds to a polynomial Fock space at every real prime. 

We shall construct a nonzero $Sp_n(\AA)\times\O(\calv_n,\AA)$-intertwining map 
\[\vth_n^\psi=\otimes_v\vth^{\psi_v}_n:\ome^\psi_{\calv_n}\otimes \O_n(\pi)\twoheadrightarrow Sp_n(\pi) \] 
explicitly. 
If $v\in\frkS_\infty$, then since $\Tht^{\psi_v}_n(\1_{\O(\calv_n,F_v)})$ is irreducible by Theorem 4.6 of \cite{LZ}, we can define by Rallis's theorem the $\O(\calv_n,F_v)$-invariant $Sp_n(F_v)$-intertwining map by 
\begin{align*}
\vth_n^{\psi_v}&:\ome^{\psi_v}_{\calv_n}\twoheadrightarrow \frkD_n^{(n)}\hookrightarrow I_n(\alp_v^{(n-1)/2}), &
\vth_n^{\psi_v}(g,f_v)&=[\ome^{\psi_v}_{\calv_n}(g)f_v](0). 
\end{align*}

We call a lattice $L$ in a quadratic space $\calv$ over $F$ even if $(x,x)\in 2\frko$ for every $x\in L$, and unimodular if $L$ equals its dual lattice $\{x\in\calv(F)\;|\;(x,L)\in\frko\}$. 
As is well-known, even unimodular lattices form a single genus. 
For $h\in\O(\calv,\AA)$, we write $hL$ for the lattice defined by $(hL)_\frkp=h_\frkp L_\frkp$, where we denote the closure of $L$ in $\calv(F_\frkp)$ by $L_\frkp$. 
Put 
\begin{align*}
\calk_L&=\{h\in\O(\calv,\AA)\;|\;hL=L\}, & 
\O(L)&=\O(\calv,F)\cap\calk_L, &
E(L)&=\sharp\O(L). 
\end{align*}
Fix an even unimodular lattice $\scrl$ in $\calv_n(F)$. 
Let $\Xi_n$ denote a fixed set of lattices that are representatives for the classes belonging to the genus of $\scrl$. 
We identify the finite set $\O(\calv_n,F)\bsl \O(\calv_n,\AA)/\calk_\scrl$ with $\Xi_n$ via the map $h\mapsto h\scrl$ and regard $\calk_\scrl$-invariant automorphic forms on $\O(\calv_n,\AA)$ as functions on the set $\Xi_n$. 
Its standard $L$-function is defined in \cite{PSR,Y3}. 

Fix an $\frko_\frkp$-basis $e_{\frkp,1},\dots,e_{\frkp,n},f_{\frkp,1},\dots,f_{\frkp,n}$ of $L_\frkp$ such that 
\begin{align*}
(e_{\frkp,i},e_{\frkp,j})&=0, & 
(f_{\frkp,i},f_{\frkp,j})&=0, & 
(e_{\frkp,i},f_{\frkp,j})&=\del_{ij}.  
\end{align*}
Put $e_\frkp=(e_{\frkp,1},e_{\frkp,2},\dots,e_{\frkp,n})\in\calv_n^n(F_\frkp)$. 
Following \cite{LTZ}, we consider the following integral, which is absolutely convergent for $\Re s\geq\frac{n}{2}$,  
\[Z(\phi_\frkp,s,\mu_\frkp)=\int_{\GL_n(F_\frkp)}\mu_\frkp(\det a)^{-1}\phi_\frkp(e_\frkp a)\alp_\frkp(\det a)^{s+(n-1)/2}\d a\]
for $\phi_\frkp\in\cals(\calv_n^n(F_\frkp))$.  
Put 
\[Z(\phi_\frkp,\mu_\frkp)=\lim_{s\to n/2}L^\mathrm{GJ}(s,\mu_\frkp^{-1}\circ{\det}_{\GL_n})^{-1}Z(\phi_\frkp,s,\mu_\frkp), \]
where $L^\mathrm{GJ}(s,\mu_\frkp^{-1}\circ{\det}_{\GL_n})=\prod_{j=1}^nL\bigl(s+\frac{n+1}{2}-j,\mu_\frkp^{-1}\bigl)$ is the Godement-Jacquet $L$-factor of the one-dimensional representation $\mu_\frkp\circ\det$ of $\GL_n(F_\frkp)$. 
A simple calculation gives 
\begin{multline*}
Z\left(\ome^{\psi_\frkp}_{\calv_n}\left(\begin{pmatrix} a_1 & b_1\trs a_1^{-1} \\ 0 & \trs a_1^{-1}\end{pmatrix},\begin{pmatrix} a_2 & b_2\trs a_2^{-1} \\ 0 & \trs a_2^{-1}\end{pmatrix}\right)\phi_\frkp,\mu_\frkp\right)\\
=\mu_\frkp(\det a_1)\alp_\frkp(\det a_1)^{n+1/2}\mu_\frkp(\det a_2)^{-1}\alp_\frkp(\det a_2)^{n-1/2}Z(\phi_\frkp,\mu_\frkp). 
\end{multline*} 
We can define the intertwining map $\vth_n^{\psi_\frkp}:\ome^{\psi_\frkp}_{\calv_n}\otimes J_n(\mu_\frkp)\twoheadrightarrow I_n(\mu_\frkp)$ by setting 
\[\vth_n^{\psi_\frkp}(g,\phi_\frkp\otimes f_\frkp)=\int_{Q^n_n(F_\frkp)\bsl\O(\calv_n,F_\frkp)}Z(\ome^{\psi_\frkp}_{\calv_n}(g,h)\phi_\frkp,\mu_\frkp)f_\frkp(h)\,\d h. \]

\begin{theorem}\label{thm:101}
Let $\pi\simeq D_n^{\boxtimes d}\otimes(\otimes'_\frkp I(\mu_\frkp^{},\mu_\frkp^{-1}))$ be an irreducible cuspidal automorphic representation of $\PGL_2(\AA)$. 
If $L\bigl(\frac{1}{2},\pi\bigl)\neq 0$, then there is a nonzero constant $c$ such that for all $\phi\in S(\calv_n^n(\AA))$ and $f\in\O_n(\pi)$
\[i_n(g,\vth^\psi_n(\phi\otimes f))=c\int_{\O(\calv_n,F)\bsl\O(\calv_n,\AA)}\overline{j_n(h,f)}\Tht(\ome^\psi_{\calv_n}(g,h)\phi)\,\d h. \]
\end{theorem}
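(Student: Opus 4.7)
The strategy is to exhibit both sides as nonzero elements of a one-dimensional $\Hom$-space, so that they must be proportional, and then to verify that the common constant $c$ is nonzero.

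Both assignments $\phi\otimes f\mapsto i_n(g,\vth^\psi_n(\phi\otimes f))$ and $\phi\otimes f\mapsto\int\overline{j_n(h,f)}\,\Tht(\ome^\psi_{\calv_n}(g,h)\phi)\,\d h$ belong to
\[ \Hom_{Sp_n(\AA)\times\O(\calv_n,\AA)}\bigl(\cals(\calv_n^n(\AA))\otimes\O_n(\pi),\;\scra_\cusp(Sp_n)\boxtimes\1\bigr),\]
with $\O(\calv_n,\AA)$ acting trivially on the target. For the right-hand side, $\O$-invariance is immediate from the substitution $h\mapsto hh_0$; for the left-hand side it follows from the defining formulas for $\vth^{\psi_v}_n$ (change of variables in the Jacquet-type integral at finite primes, together with $\O_n(\pi)_v=\1$ at real primes). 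By the universal property of the (big) theta lift, using the $\frkp$-adic Howe conjecture and the archimedean correspondence recalled in Section \ref{sec:3}, any element of this $\Hom$-space factors through $\Tht^\psi_n(\O_n(\pi))\boxtimes\O_n(\pi)$, and since $\scra_\cusp(Sp_n)$ is semisimple, further through the maximal semisimple quotient of $\Tht^\psi_n(\O_n(\pi))$, which Lemma \ref{lem:81} identifies with $Sp_n(\pi)\simeq\Ik_n(\pi)$. The hypothesis $L\bigl(\tfrac12,\pi\bigr)\neq 0$ forces $\vep\bigl(\tfrac12,\pi\bigr)=1$; together with $\ell_\pi^-=0$ and $dn/2$ even, Proposition \ref{prop:91} gives $\rmm_\cusp(Sp_n(\pi))=1$, so the $\Hom$-space is one-dimensional.

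It remains to show both sides are nonzero. The left-hand side is clearly nonzero, since each $\vth^{\psi_v}_n$ is a nonzero surjection and $i_n$ is a nonzero embedding. For the right-hand side I would invoke Proposition \ref{prop:62}(\ref{prop:622}) applied to $j_n(\O_n(\pi))$: analogously to the computation in the proof of Theorem \ref{thm:91}, the complete standard $L$-function factors as
\[ L(s,\O_n(\pi))=L_\infty(s,\O_n(\pi))\prod_{j=1}^{n}L_\bff\bigl(s+\tfrac{n+1}{2}-j,\pi\bigr),\]
which is entire because $\pi$ is cuspidal on $\PGL_2$, and which satisfies $L(1,\O_n(\pi))\neq 0$ since the factor at $j=\frac{n+2}{2}$ is $L_\bff\bigl(\tfrac12,\pi\bigr)\neq 0$ by hypothesis, the remaining finite factors being nonzero at their real arguments by the Euler product and functional equation, and the archimedean factor being holomorphic and nonzero near $s=1$. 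Combined with Lemma \ref{lem:81}, which gives $\tht^{\psi_v}_n(\O_n(\pi)_v)\simeq Sp_n(\pi)_v\neq 0$ at every place, this shows that the right-hand side is a nonzero cuspidal automorphic form. Both sides are therefore nonzero elements of a one-dimensional space, and the identity holds with $c\neq 0$.

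The principal technical obstacle is the simultaneous $\frkp$-adic and archimedean identification of the maximal semisimple quotient of $\Tht^\psi_n(\O_n(\pi))$ with $Sp_n(\pi)$, together with the control of the archimedean $L$-factor at the boundary weight $\kap_v=n/2$ required to invoke Proposition \ref{prop:62}(\ref{prop:622}); once these are established, the proportionality follows immediately from multiplicity one, and positivity of $c$ is forced by the nonvanishing arguments above.
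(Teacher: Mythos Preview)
Your proof is correct and follows essentially the same approach as the paper: both sides define $Sp_n(\AA)\times\O(\calv_n,\AA)$-intertwining maps which, by the Howe duality principle, factor through the irreducible quotient $Sp_n(\pi)$, and then multiplicity one ($\rmm_\cusp(Sp_n(\pi))=1$) forces proportionality with nonzero constant. The paper's proof is terser, citing the proof of Theorem~\ref{thm:92} for the nonvanishing and identification of the global theta lift rather than re-deriving the $L$-function argument, but the underlying logic is identical.
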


\begin{proof}
As was seen in the proof of Theorem \ref{thm:92}, the theta lift of $\O_n(\pi)$ is equal to $Sp_n(\pi)$. 
The Howe principle forces the map $\phi\otimes f\mapsto\tht^\psi_n(j_n(f),\phi)$ to factor through the map $\vth^\psi_n$. 
It is proportional to $i_n\circ\vth^\psi_n$ by uniqueness. 
\end{proof}

Finally, we translate our results into a more classical language. 
Let $\AA_\infty=F\otimes_\QQ\RR$ be the infinite part of the ad\`{e}le ring. 
The Siegel upper half space $\calh_m$ of degree $m$ consists of all complex symmetric matrices of size $m$ with positive definite imaginary part. 
The subset $\Sym_m^+$ of $\Sym_m(F)$ consists of totally positive definite symmetric matrices. 
For fractional ideals $\frkb,\frkc$ of $\frko$ we put 
\beq
\Gam_m[\frkb,\frkc]=\left\{\begin{pmatrix} A & B \\ C & D \end{pmatrix}\in Sp_m(F)\;\biggl|\; \begin{matrix} A,D\in\Mat_m(\frko) \\ 
B\in\frkb\Mat_m(\frko),\;C\in\frkc\Mat_m(\frko)\end{matrix}\right\}. \label{tag:101} 
\eeq

We define the action of $Sp_m(\RR)$ on $\calh_m$ and the automorphy factor by 
\begin{align*}
g\calz&=(A\calz+B)(C\calz+D)^{-1}, & 
J(g,\calz)&=\det(C\calz+D)
\end{align*}
for $\calz\in\calh_m$ and $g=\begin{pmatrix} A & B \\ C & D \end{pmatrix}\in Sp_m(\RR)$. 
For $\kap\in\ZZ$, $g=(g_v)\in Sp_m(\AA_\infty)$ and a $\CC$-valued function $\calf$ on $\calh_m$ we define a function $\calf|_\kap g:\calh_m^d\to\CC$ by 
\[\calf|_\kap g(\calz)=\calf(g\calz)J(g,\calz)^{-\kap}, \]
where $g\calz=(g_v\calz_v)$ and $J(g,\calz)=\prod_vJ(g_v,\calz_v)$. 

Define the character $\bfe_\infty:\CC^d\to\CC^\times$ by $\bfe_\infty(\calz)=\prod_{v\in\frkS_\infty}e^{2\pi\iu\calz_v}$ for $\calz=(\calz_v)\in\CC^d$. 
A holomorphic function $\calf$ on $\frkH^d_m$ is called a Hilbert-Siegel cusp form of weight $\kap$ with respect to $\Gam_m[\frkb,\frkc]$ if $\calf|_\kap\gam=\calf$ for all $\gam\in\Gam_m[\frkb,\frkc]$ and for all $\bet\in Sp_m(F)$, $\calf|_\kap\bet$ has a Fourier expansion of the form 
\[\calf|_\kap\bet(\calz)=\sum_{\xi\in \Sym^+_m}c(\xi)\bfe_\infty(\tr(\xi\calz)). \] 
The theta series associated to $L\in\Xi_i$ is a Hilbert-Siegel modular form of degree $j$ and weight $i$ defined by 
\[\tht_L^{(j)}(\calz)=\sum_{x\in L^j}\bfe_\infty(\tr(Q(x)\calz))=\sum_{\xi\in\Sym_j(F)}N(\xi,L)\bfe_\infty(\tr(\xi\calz)).  \] 

Let $\pi\simeq D_{2k}^{\boxtimes d}\otimes(\otimes'_\frkp I(\alp_\frkp^{s_\frkp},\alp_\frkp^{-s_\frkp}))$ be an irreducible cuspidal automorphic representation of $\PGL_2(\AA)$. 
Unless otherwise indicated, all terminology and notations will follow those of Section \ref{sec:1}. 
Suppose that $dk$ is even. 
Let 
\[h_m(\calz)=\sum_{\eta\in\Sym_1^+} c(\eta)\bfe_\infty(\eta \calz)\frkf_{(-1)^m\eta}^{(2k-1)/2}\prod_\frkp\Psi_\frkp((-1)^m\eta,q_\frkp^{s_\frkp})\in S^+_{(2k+1)/2} \]
be an element which generates $\otimes'_v\Mp_1^{\psi_v}(\sig_v^{\eps_1(\sig_v)})$, where $\sig=\pi\otimes\hat\chi^{(-1)^m}$. 
Corollary 10.1 of \cite{IY} says that the series 
\[\calf_m(Z)=\sum_{\xi\in\scrr_{2m}\cap\Sym_{2m}^+} c(\det(2\xi))\frkf_\xi^{(2k-1)/2}\prod_\frkp\wtl{F}_\frkp(\xi,q_\frkp^{s_\frkp})\bfe_\infty(\tr(\xi\calz)) \] 
is a Hilbert-Siegel cusp form of weight $k+m$ with respect to  $\Gam_{2m}[\frkd^{-1},\frkd]$ which generates the representation $\Ik_{2m}(\pi)$ defined in (\ref{tag:91}). 

Theorem \ref{thm:101} can be made explicit with a suitable choice of test functions. 
\begin{corollary}\label{cor:101} 
Notation being as above, we let $m=k$. 
If $f$ is a nonzero $\calk_\scrl$-invariant vector in $\O_{2k}(\pi)$ and $L\bigl(\frac{1}{2},\pi\bigl)\neq 0$, then the sum 
\beq
\sum_{L\in\Xi_{2k}} f(L)\frac{\tht^{(2k)}_L(\calz)}{E(L)} \label{tag:102}
\eeq
is a nonzero Hilbert-Siegel cusp form of weight $n$ with respect to $\Gam_{2k}[\frkd^{-1},\frkd]$ and  (\ref{tag:102}) is equal to $\calf_k$ up to scalar. 
\end{corollary}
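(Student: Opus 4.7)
The plan is to apply Theorem \ref{thm:101} with a carefully chosen Schwartz function and then unfold both sides into classical language. Take $\phi = \phi_\infty\otimes\phi_\bff \in S(\calv_{2k}^{2k}(\AA))$, where $\phi_\frkp$ is the characteristic function of $\scrl_\frkp^{2k}$ at each finite prime and $\phi_v(x) = \bfe_\infty(\tr(Q(x)\iu\ono_{2k}))$ is the standard Gaussian at each real prime. By construction $\phi$ is invariant under $\O(\calv_{2k},\AA_\infty)\times\calk_\scrl$, and $\phi_v$ represents the lowest $K$-type vector in $\ome^{\psi_v}_{\calv_{2k}}$ for the real orthogonal group.

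First I would unfold the integral on the right-hand side of Theorem \ref{thm:101} using the class number decomposition
\[\O(\calv_{2k},\AA) = \bigsqcup_{L\in\Xi_{2k}} \O(\calv_{2k},F)\cdot h_L \cdot \bigl(\O(\calv_{2k},\AA_\infty)\times\calk_\scrl\bigr),\]
where $h_L\in\O(\calv_{2k},\AA_\bff)$ satisfies $h_L\scrl = L$. The right-invariance of $\phi$ under the compact subgroup and $\calk_\scrl$-invariance of $f$ collapse the integral to a finite sum with weights proportional to $E(L)^{-1}$, the stabilizer in $\O(\calv_{2k},F)$ of $h_L$ being $\O(L)$. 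Choosing $f$ to be real-valued (as one may for a Hecke eigenfunction on $\Xi_{2k}$) and identifying $j_n(h_L,f)$ with $f(L)$, the finite-place part of the kernel $\Tht(\ome^\psi_{\calv_{2k}}(g,h_L)\phi)$ forces lattice vectors to lie in $L^{2k}$, while the archimedean Gaussian produces the classical Siegel theta series $\tht_L^{(2k)}(\calz)$ at $\calz\in\calh_{2k}^d$ corresponding to $g$. The resulting expression is proportional to (\ref{tag:102}).

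Second I would identify the left-hand side with a scalar multiple of $\calf_k$. At each finite prime $\pi_\frkp$ is unramified and $\phi_\frkp, f_\frkp$ are spherical, so $\vth_{2k}^{\psi_\frkp}(\phi_\frkp\otimes f_\frkp)$ lies in the spherical line of $I_{2k}(\mu_\frkp)$. At each real prime $\vth_{2k}^{\psi_v}(\phi_v)$ corresponds via the Siegel section to the lowest $K$-type vector of $\frkD^{(2k)}_{2k}$, which matches a holomorphic function of weight $2k$ on $\calh_{2k}$. Hence $i_n(\vth_{2k}^\psi(\phi\otimes f))$ is a Hilbert-Siegel cusp form lying in the embedded image of $Sp_{2k}(\pi) = \Ik_{2k}(\pi)$, holomorphic of weight $2k$ with respect to $\Gam_{2k}[\frkd^{-1},\frkd]$ (the level arising from the conductor $\frkd^{-1}$ of $\psi$). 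By Proposition \ref{prop:91} we have $\rmm_\cusp(\Ik_{2k}(\pi)) = 1$, so the isotypic component is one-dimensional once the archimedean $K$-type and the spherical data at finite places are fixed; consequently our cusp form is proportional to $\calf_k$.

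Nonvanishing of (\ref{tag:102}) follows from that of $\calf_k$ (guaranteed by Proposition \ref{prop:91} under $L\bigl(\tfrac{1}{2},\pi\bigr)\neq 0$), the nonvanishing of the constant $c$ in Theorem \ref{thm:101}, and the fact that each local map $\vth_{2k}^{\psi_v}$ is nonzero on the spherical or lowest-$K$-type input. The main obstacle is the archimedean verification: checking that the Gaussian $\phi_v$ is sent by $\vth_{2k}^{\psi_v}$ precisely to the lowest-weight holomorphic generator of $\frkD_{2k}^{(2k)}$, so that the cusp form produced on the left-hand side is holomorphic of weight exactly $2k$. This is a standard Weil-representation computation, but demands careful normalization of the Siegel section and of the measure at infinity.
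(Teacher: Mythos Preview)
Your proposal is correct and follows essentially the same route as the paper: choose the Gaussian at infinity and the characteristic function of $\scrl_\frkp^{2k}$ at finite places, unfold the theta integral over the genus to obtain the weighted sum of theta series, verify that the local maps $\vth_{2k}^{\psi_v}$ send these test vectors to the spherical line (finite places) and the lowest-weight vector of $\frkD_{2k}^{(2k)}$ (real places), and conclude via multiplicity one that the resulting holomorphic cusp form of weight $2k$ and level $\Gam_{2k}[\frkd^{-1},\frkd]$ is a nonzero multiple of $\calf_k$. The only cosmetic difference is that the paper packages the nonvanishing of the left-hand side by citing \cite[Proposition~8.9]{IY} directly, whereas you deduce it from the injectivity of $i_{2k}$ together with the nonvanishing of each local $\vth_{2k}^{\psi_v}$ on the chosen input; these are equivalent.
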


\begin{proof}
We define $\phi_\scrl=(\otimes_{v\in\frkS_\infty}\phi_v^0)\otimes(\otimes_\frkp\phi_{\scrl_\frkp})\in S(\calv_{2k}^{2k}(\AA))$ by $\phi^0_v(x)=e^{-2\pi\tr(Q(x))}$ for $x\in\calv_{2k}^{2k}(F_v)$ and by letting $\phi_{\scrl_\frkp}$ be the characteristic function of $\scrl_\frkp^{2k}$. 
Observe that $\phi_{\scrl_\frkp}$ is invariant under the action of the product $\Gam_{2k}[\frkd_\frkp]\times\calk_{\scrl_\frkp}$ of maximal compact subgroups, where  
\begin{align*}
\Gam_{2k}[\frkd_\frkp]&=\left\{\begin{pmatrix} A & B \\ C & D \end{pmatrix}\in Sp_{2k}(F_\frkp)\;\biggl|\; \begin{matrix} A,D\in\Mat_{2k}(\frko_\frkp) \\ 
B\in\frkd_\frkp^{-1}\Mat_{2k}(\frko_\frkp),\;C\in\frkd_\frkp\Mat_{2k}(\frko_\frkp)\end{matrix}\right\}, \\
\calk_{\scrl_\frkp}&=\{h\in\O(\calv_{2k},F_\frkp)\;|\;h\scrl_\frkp=\scrl_\frkp\}.   
\end{align*}
On the other hand, for $v\in\frkS_\infty$, $g\in Sp_{2k}(F_v)$, $h\in\O(\calv_{2k},F_v)$, $x\in\calv_{2k}(F_v)$
\[[\ome^{\psi_v}_{\calv_{2k}}(g,h)\phi^0_v](x)=J(g,\iu\ono_{2k})^{-2k}e^{-2\pi\tr(Q(x)g(\iu{\bf1}_{2k}))}. \]
Put $\bfi_{2k}=(\iu\ono_{2k},\dots,\iu\ono_{2k})\in\calh_{2k}^d$. 
Then 
\begin{align*}
\Tht(\ome^\psi_{\calv_{2k}}(g,h)\phi_\scrl)&=\tht_{h\scrl}(g(\bfi_{2k}))/J_{2k}(g,\bfi_{2k})
&(g&\in Sp_{2k}(\AA_\infty),\; h\in\O(\calv_{2k},\AA)). 
\end{align*}
If we put $\calz=g(\bfi_{2k})$, then $J_{2k}(g,\bfi_{2k})\tht^\psi_{\calv_{2k}}(g,f\otimes\phi_\scrl)$ equals the sum (\ref{tag:102}). 

If we write $f=j_{2k}(\otimes_\frkp^{} f^0_\frkp)$ with $\calk_{\scrl_\frkp}$-invariant vector $f^0_\frkp\in J_{2k}(\alp_\frkp^{s_\frkp})$, then one can readily see that $\vth^{\psi_\frkp}_{2k}(\phi_{\scrl_\frkp}\otimes f^0_\frkp)$ is a nonzero $\Gam_{2k}[\frkd_\frkp]$-invariant vector in $I_{2k}(\alp_\frkp^{s_\frkp})$. 
Since $\vth^{\psi_v}_{2k}(\phi^0_v)$ is a nonzero lowest weight vector in $\frkD^{(2k)}_{2k}$, Proposition 8.9 of \cite{IY} shows that $i_{2k}^{}(\vth^\psi_{2k}(\phi_\scrl\otimes(\otimes^{}_\frkp f^0_\frkp)))$ is a nonzero Hilbert-Siegel cusp form of weight $2k$ with respect to $\Gam_{2k}[\frkd^{-1},\frkd]$. 
Since $\rmm_\cusp(\Ik_{2k}(\pi))=1$, the series $\calf_k$ and (\ref{tag:102}) are equal up to scalar.  
\end{proof}

\begin{remark}\label{rem:101}
\begin{enumerate}
\renewcommand\labelenumi{(\theenumi)}
\item\label{rem:1011} When $k=1$ and $F=\QQ$, the theta lift of this type has been studied by Yoshida \cite{Yo} (cf. \cite[Theorem 5.4]{Sc}). 
\item\label{rem:1012} The linear subspace of $M_{12}(Sp_{12}(\ZZ))$ spanned by the theta series associated to the 24 different Niemeier lattices is extensively studied in \cite{BFW,NV,I2,I6,C}. 
It intersects the space of cusp forms in a one dimensional subspace. 
Its basis vector is explicitly given in Theorem 4 of \cite{BFW}.  
Letting $F=\QQ$ and $k=6$, Ikeda verified that this subspace is spanned also by his lift $\calf_6$ of the Ramanujan delta function 
in Section 15 of \cite{I2}.  
\item\label{rem:1013} The function $f$ can be obtained by the integral
\[f(L)=\int_{\Gam_{2k}^{}[\frkd^{-1},\frkd]\bsl\calh_{2k}^d}\calf_k(\calz)\overline{\tht^{(2k)}_L(\calz)}\,\prod_{v\in\frkS_\infty}(\Im\calz)^{2k}\d\mu, \]
where $\d\mu$ denotes the $Sp_{2k}(\AA_\infty)$-invariant top degree differential form on $\calh_{2k}^d$. 
When $F=\QQ$, we can view our identity as a refinement of the general formula (\Roman{nin}.1) of \cite{B}.   
\end{enumerate}
\end{remark}



\begin{thebibliography}{99}

\bibitem{AB}
{J.~Adams} and {D.~Barbasch}, 
{Genuine representations of the metaplectic group\/}, 
Compos. Math. {\bf 113} (1998) 23--66. 




\bibitem{BJ}{D.~Ban} and {C.~Jantzen}, {Degenerate principal series for even-orthogonal groups\/}, Represent. Theory {\bf 7} (2003), 440--480. 

\bibitem{B}{S.~B\"{o}cherer}, {Siegel modular forms and theta series\/}, in Theta Functions--Bowdoin 1987, Part 2 (Brunswick, Maine, 1987), Proc. Sympos. Pure Math. {\bf 49}, Part 2, Amer. Math. Soc., Providence, 1989, 3--17.

\bibitem{BFW}
{R.~Borcherds}, {E.~Freitag} and  {R.~Weissauer}, 
{A Siegel cusp form of degree 12 and weight 12\/},  
J. Reine Angew. Math. {\bf 494} (1998), 141--153.  

\bibitem{C}{G.~Chenevier}, {Repr\'{e}sentations galoisiennes automorphes et cons\'{e}quences arithm\'{e}tiques des conjectures de Langlands et Arthur\/}, arXiv:1307.5170v1

\bibitem{FH}
{S.~Friedberg} and {J.~Hoffstein}, 
{Nonvanishing theorems for automorphic $L$-functions on $\GL(2)$\/}, 
Ann. of Math. (2) {\bf 142} (1995), no. 2, 385--423.

\bibitem{G}
{W.~T.~Gan}, 
{The Saito-Kurokawa space of $\mathrm{PGSp}_4$ and its transfer to inner forms\/}, 
Eisenstein series and applications, 87--123, 
Progr. Math., {\bf 258}, 
Birkh\"{a}user Boston, Boston, MA, 2008.

\bibitem{G2}
{W.~T.~Gan}, 
{Doubling zeta integrals and local factors for metaplectic groups\/}, Nagoya Math. J. {\bf 208} (2012) 67--95. 

\bibitem{G3}
{W.~T.~Gan}, 
{Theta correspondence: recent progress and applications\/}, 
Proceedings of ICM (2014) Vol. 2, 343--366.

\bibitem{GG}
{W.~T.~Gan} and {N.~Gurevich}, 
{Restrictions of Saito-Kurokawa representations. With an appendix by Gordan Savin\/}, 
Contemp. Math., {\bf 488}, Automorphic forms and $L$-functions \Roman{one}. 
Global aspects, 95--124, Amer. Math. Soc., Providence, RI, 2009. 

\bibitem{GI}
{W.~T.~Gan} and {A.~Ichino}, 
{Formal degrees and local theta correspondence\/}, 
Invent. Math. {\bf 195} (2014) 509--672. 


\bibitem{GS}
{W.~T.~Gan} and {G.~Savin},
{Representations of metaplectic groups \Roman{one}: epsilon dichotomy and local Langlands correspondence\/}, Compos. Math. {\bf 148} (2012), no. 6, 1655--1694.


\bibitem{GT}{W.~T.~Gan} and {S.~Takeda},
{Theta correspondences for $\GSp(4)$\/}, 
Represent. Theory {\bf 15} 
(2011) 
670--718.

\bibitem{GT2}
{W.~T.~Gan} and {S.~Takeda},
{A proof of the Howe duality conjecture\/}, 
J. Am. Math. Soc. {\bf 29} (2016), 
no. 2, 
473--493.



\bibitem{HI}
{K.~Hiraga} and {T.~Ikeda}, 
{On the Kohnen plus space for Hilbert modular forms of half-integral weight \Roman{one}\/}, 
Compos. Math. {\bf 149} 
(2013), 
no. 12, 
1963--2010.

\bibitem{H}
{R.~Howe}, 
{Transcending classical invariant theory\/}, 
J. Am. Math. Soc. {\bf 2} (1989),
no. 3, 
535--552. 



\bibitem{I2}{T.~Ikeda}, {On the lifting of elliptic cusp forms to Siegel cusp forms of degree $2n$\/}, Ann. of Math. {\bf 154}(2001) 641--681. 



\bibitem{I6}
{T.~Ikeda}, {Pullback of the lifting of elliptic cusp forms and Miyawaki's conjecture\/}, 
Duke Math. J. {\bf 131} (2006), no. 3, 469--497.

\bibitem{I5}
{T.~Ikeda}, 
{On the functional equation of the Siegel series\/}, 
preprint. 

\bibitem{IY}
{T.~Ikeda} and {S.~Yamana}, 
{On the lifting of Hilbert-cusp forms to Hilbert-Siegel cusp forms}, 
preprint. 

\bibitem{J}
{C.~Jantzen}, 
{Degenerate principal series for symplectic and odd-orthogonal groups\/}, 
Mem. Amer. Math. Soc. {\bf 124} (1996), no. 590, viii+100 pp. 


\bibitem{Kat}
{H.~Katsurada}, 
{An explicit formula for Siegel series\/},  
Amer. J. Math. 
{\bf 121} 
(1999), 
no. 2,  
415--452. 

\bibitem{Ko}
{W.~Kohnen}, 
{Modular forms of half-integral weight on $\Gam_0(4)$\/}, 
Math. Ann. {\bf 248} (1980), 249--266.

\bibitem{KZ}
{W.~Kohnen} and {D.~Zagier}, 
{Values of $L$-series of modular forms at the center of the critical strip\/},
Invent. Math. {\bf 64} (1981), 175--198. 



\bibitem{KR}
{S.~S.~Kudla} and {S.~Rallis}, 
{Ramified degenerate principal series representations for $Sp(n)$\/}, 
Israel J. Math. 
{\bf 78} 
(1992),
no. 2-3, 
209--256. 

\bibitem{KR2}
{S.~S.~Kudla} and {S.~Rallis}, 
{On first occurrence in the local theta correspondence\/}, 
in Automorphic representations, $L$-functions and applications: progress and prospects, 
273--308, 
Ohio State Univ. Math. Res. Inst. Publ., {\bf 11}, de Gruyter,
Berlin, 2005.

\bibitem{LR}
{E.~Lapid} and {S.~Rallis}, 
{On the local factors of representations of classical groups\/},
Automorphic Representations, $L$-Functions and Applications: Progress and Prospects, 
pp. 309--359. de Gruyter, Berlin (2005)

\bibitem{LZ}
{S.~T.~Lee} and {C.-B.~Zhu}, 
{Degenerate principal series and local theta correspondence \Roman{two}\/}, 
Israel J.~Math. {\bf 100} (1997), 29--59. 


\bibitem{LTZ}
{J.-S.~Li}, {E.-C.~Tan} and {C.-B.~Zhu}, 
{Tensor product of degenerate principal series and local theta correspondence\/}, 
J. Funct. Anal. {\bf 186} (2001), no. 2, 381--431.

\bibitem{MVW}
{C.~Moeglin}, {M.-F.~Vigneras} and {C.-L.~Waldspurger},
{Correspondence de Howe sur un corps $p$-adique\/}, 
Springer Lec. notes in Math. 
{\bf 1291}, 
1987. 

\bibitem{NV}{G.~Nebe} and {B.~Venkov}, {On Siegel modular forms of weight $12$/}, J. Reine Angew. Math. {\bf 531} (2001), 49--60.


\bibitem{PS}{I.\,I.~Piatetski-Shapiro}, 
{Work of Waldspurger\/}, 
Lie group representations, \Roman{two} (College Park, Md., 1982/1983), 280--302, Springer Lec. notes in Math. 
{\bf 1041}, 
1984. 

\bibitem{PS2}
{I.\,I.~Piatetski-Shapiro}, 
{On the Saito-Kurokawa lifting\/}, 
Invent. Math. {\bf 71} 
(1983), 
309--338.

\bibitem{PSR}
{I.\,I.~Piatetski-Shapiro} and {S.~Rallis}, 
{$L$-functions for classical groups\/}, 
Explicit constructions of automorphic $L$-functions, 
1--52, 
Springer Lec. notes in Math. {\bf 1254}, 1987. 



\bibitem{Sc}{R.~Schmidt}, {The Saito-Kurokawa lifting and functoriality\/}, Amer. J. Math. {\bf 127} (2005), no. 1, 209--240.



\bibitem{Sh4}{G.~Shimura}, 
{Arithmetic and analytic theories of quadratic forms and Clifford groups\/}, 
Math. Surv. Monog. 
{\bf 109}, 
Amer. Math. Soc. 2004. x+275 pp. 

\bibitem{Sh5}{G.~Shimura}, 
{Classification, construction, and similitudes of quadratic forms\/}, 
Amer. J. Math. {\bf 128} (2006) 1521--1552. 

\bibitem{Si}{C.~L.~Siegel}, 
{\"{U}ber die analytische Theorie der quadratishen Formen\/}, Ann. of Math. (2) {\bf 36}
(1935), 527--606; 
\Roman{two}, {\bf 37} (1936), 230--263; 
\Roman{thr}, {\bf 38} (1937), 212--291. 

\bibitem{SZ}
{B.~Sun} and {C.-B.~Zhu}, 
{Conservation relations for local theta correspondence\/}, 
J. Am. Math. Soc. {\bf 28} (2015) 939--983. 

\bibitem{Sw}{W.~J.~Sweet, Jr.}, 
{Functional equations of $p$-adic zeta integrals and representations of the metaplectic group\/}, 
preprint (1995)



\bibitem{W2}
{J.-L.~Waldspurger}, 
{Correspondance de Shimura et quaternions\/}, 
Forum Math. 
{\bf 3} 
(1991), no. 3, 
219--307. 




\bibitem{We}
{A.~Weil}, 
{Sur la formule de Siegel dans la th\'{e}orie des groupes classiques\/}, 
Acta Math. {\bf 113} (1965),
1--87.


\bibitem{Y2}
{S.~Yamana}, 
{Degenerate principal series representations for quaternionic unitary groups\/}, 
Israel J. Math. {\bf 185} (2011), 77--124.

\bibitem{Y3}
{S.~Yamana}, 
{$L$-functions and theta correspondence for classical groups\/}, 
Invent. Math. {\bf 196} (2014), no. 3, 651--732. 

\bibitem{Y4}
{S.~Yamana}, 
{Degenerate principal series and Langlands classification\/}, 
in preparation. 


\bibitem{Yo}
{H.~Yoshida}, 
{Siegel's modular forms and the arithmetic of quadratic forms\/}, 
Invent. Math. {\bf 60} (1980), 193--248.

\bibitem{Z}
{C.~Zorn}, 
{Theta dichotomy and doubling epsilon factors for $Sp_n(F)$}, 
Amer. J. Math. {\bf 133}
(2011)
1313--1364. 

\end{thebibliography}
\end{document}